\newtheorem{thm}{Theorem}[section]
\newtheorem{cor}[thm]{Corollary}
\newtheorem{prop}[thm]{Proposition}
\newtheorem{lem}[thm]{Lemma}
\newtheorem*{thm*}{Theorem}
\theoremstyle{definition}
\newtheorem{defn}[thm]{Definition}
\theoremstyle{remark}
\newtheorem{rem}[thm]{Remark}
\newtheorem{ex}[thm]{Example}
\numberwithin{equation}{section}
\newcommand{\boxc}[0]{\;\square\;}
\newcommand{\bb}[1]{\mathbb{#1}}
\newcommand{\id}[1]{\text{id}_{#1}}
\newcommand{\set}{\mathbf{Set}}
\newcommand{\sset}{\mathbf{sSet}}
\newcommand{\ssp}{\mathbf{sSet}^{\Delta^\mathrm{op}}}
\newcommand{\dset}{\mathbf{dSet}}
\newcommand{\dsp}{\mathbf{sSet}^{\Omega^\mathrm{op}}}
\newcommand{\sspsegal}{\ssp_{\mathrm{Segal}}}
\newcommand{\dspsegal}{\dsp_{\mathrm{Segal}}}
\newcommand{\dspcss}{\dsp_{\mathrm{CSS}}}
\newcommand{\sspcat}{\ssp_{\mathrm{Cat}}}
\newcommand{\dspop}{\dsp_{\mathrm{Op}}}
\newcommand{\op}{\mathbf{Op}}
\newcommand{\preop}{\mathcal{P}(\op)}
\newcommand{\cosk}{\mathrm{cosk}}
\newcommand{\std}[1]{\Delta[#1]} 
\newcommand{\horn}[2]{\Lambda^{#1}[#2]} 
\newcommand{\om}[1]{\Omega[#1]} 
\newcommand{\spn}[1]{\text{Sp}[#1]} 
\newcommand{\popd}[5]{#2 #1 #5\coprod_{#2 #1 #4}#3 #1 #4} 
\newcommand{\pop}[5]{\popd{#1}{#2}{#3}{#4}{#5}\longrightarrow #3 #1 #5} 
\newcommand{\Map}[2]{\text{Map}(#1,#2)}
\newcommand{\ho}{{\rm ho}}
\newcommand{\expd}[2]{#1^{\{#2\}}}
\newcommand{\mapd}[3]{\text{map}_{#1}(#2;#3)}
\newcommand{\mapds}[4]{\mapd{#1}{{#2}_1,\ldots, {#2}_{#3}}{#4}}
\title[A new model structure on dendroidal spaces]{A new model structure on dendroidal spaces\\ for the theory of $\infty$-operads}
\author[J. Candeias]{João Candeias}
\address{João Candeias: Departament de Matem\`atiques i Inform\`atica, Universitat de Barcelona (UB). Gran Via de les Corts Catalanes 585. 08007, Barcelona, Spain}
\author[J.J. Gutiérrez]{Javier J. Guti\'errez}
\address{Javier J. Guti\'errez: Departament de Matem\`atiques i Inform\`atica, Universitat de Barcelona (UB). Gran Via de les Corts Catalanes 585. 08007, Barcelona, Spain}
\begin{document}

\begin{abstract}
We introduce a new model structure on the category of dendroidal spaces, designed to provide a further model for the homotopy theory of $\infty$\nobreakdash-operads. This model is directly analogous to a recent construction on the category of simplicial spaces
by Moser and Nuiten, and can be seen as its dendroidal counterpart. In our new model structure, the fibrant objects are the dendroidal Segal spaces, while the cofibrations form a subclass of those in the Segal space model structure. The weak equivalences between fibrant objects are precisely the Dwyer--Kan equivalences, and the fibrations between them are the isofibrations. We prove that this model structure is Quillen equivalent to the Cisinski–Moerdijk model structures on dendroidal sets and dendroidal spaces, thereby establishing a compatible extension of the theory of $\infty$-categories
to the operadic setting.
\end{abstract}

\maketitle

\section{Introduction}
Dendroidal sets and dendroidal spaces are presheaf and simplicial presheaf categories, respectively, on the category $\Omega$ of trees. Both categories were introduced by Moerdijk--Weiss~\cite{Moerdijk_2007} and exploited by Cisinski--Moerdijk~\cite{Cisinski_Moerdijk_1,Cisinski_Moerdijk_2} in order to provide combinatorial models for the notion of operad up to homotopy or $\infty$-operad. They generalize the theory of $\infty$-categories developed by Rezk, Joyal and Lurie, among others, in particular the models of quasi-categories and complete Segal spaces.

The idea behind both of these models for $\infty$-categories begins with the observation that a category can be described as a simplicial set $X$ satisfying the property that, for every spine inclusion of simplices $\spn{n}\hookrightarrow \std{n}$, the induced map
$$
   \sset(\std{n},X)\to \sset(\spn{n},X)
$$
is a bijection for every $n\ge 0$, where $\sset$ denotes the category of simplicial sets. This means that there is a unique way of composing $n$ consecutive maps.

The quasi-category model structure $\sset_{\text{qCat}}$ on simplicial sets generalizes this idea by requiring only that the map above be surjective. Replacing simplicial sets with simplicial spaces, that is, functors from $\Delta^{\rm op}$ to simplicial sets, we can obtain the complete Segal space model structure $\ssp_{\text{CSS}}$ on simplicial spaces. More specifically, the category of simplicial spaces comes equipped with a Reedy model structure enriched over the classical Kan--Quillen model structure on simplicial sets, so that now we have induced maps of \emph{simplicial sets}
$$
\Map{N[n]}{X}\to \Map{\spn{n}}{X}
$$
for every $n\ge 0$, where $N[n]$ is the discrete nerve of the category with $n+1$ objects and $n$ composable arrows, 
and the condition imposed is that these maps are weak equivalences.
Reedy fibrant simplicial spaces satisfying this last condition are called \emph{Segal spaces}, and in these it makes sense to speak of spaces of objects, arrows, pairs of composable arrows and their composition, and so on. There is a model structure $\ssp_{\text{Segal}}$ on the category of simplicial spaces, which is a localization of the Reedy model structure, and for which the cofibrations are the Reedy cofibrations and the fibrant objects are the Segal spaces. 
However, to obtain Rezk's model one must perform an additional step called completion, which ensures that the resulting object is a \emph{complete Segal space}. By further localizing the previous model, we obtain the model structure $\ssp_{\text{CSS}}$, which has for cofibrations the monomorphisms, and the fibrant (immediately also cofibrant) objects are the complete Segal spaces.

The need for completion can be motivated as follows. For a general Segal space $X$, its space of objects~$X_0$ is somewhat related to the subspace of isomorphisms $X_{\text{hoeq}}\subset X_1$. In particular, if $x,y\in X_{0,0}$ are vertices that lie in the same path component of $X_0$, then there necessarily exists an isomorphism $f\colon x\to y$. 
However, the converse need not hold, that is, $X_0$ may fail to capture the full (higher) homotopical information encoded in the space of isomorphisms. In particular, it is possible for vertices in distinct components of $X_0$ to be connected by an isomorphism in $X_{\text{hoeq}}$.

This mismatch causes difficulties when defining weak equivalences. In $\ssp_{\text{Segal}}$, the weak equivalences between Segal spaces $f\colon X\to Y$ coincide with the Reedy weak equivalences, that is, termwise weak equivalences
$$
    f_n \colon X_n\longrightarrow Y_n
$$
for every $n\ge 0$. By the Segal condition, it suffices to require that $f_n$ is a weak equivalence only for $n=0,1$. However, if we want to model equivalences of $\infty$-categories, that is, essential surjectivity and fully faithfulness, it is enough to require that, for $n=0$, the map
$$   \pi_0(X_0)_{/\sim}\longrightarrow\pi_0(Y_0)_{/\sim}
$$
is a surjection, where $\sim$ is the relation coming from the space $X_{\text{hoeq}}\subset X_1$, and that for $n=1$ the square
\begin{equation*}
    \begin{tikzcd}
    X_1 \arrow[d] \arrow[r] & Y_1 \arrow[d] \\
    X_0\times X_0 \arrow[r] & Y_0\times Y_0
    \end{tikzcd}
\end{equation*}
is a homotopy pullback. A map satisfying these conditions is called a \emph{Dwyer--Kan equivalence}. Note that relaxing the demand that $f_0$ be a weak equivalence to asking that $\pi_0(X_0)_{/\sim}\to\pi_0(Y_0)_{/\sim}$ is surjective is indeed the sensible thing to do, given the lack of information in $X_0$ and $Y_0$ regarding isomorphism classes (and, more generally, the higher homotopy of the space of isomorphisms).

Rezk's process of completion augments $X_0$ with this missing data, turning it into a sort of ``moduli space'' of objects that is canonically equivalent (up to homotopy) to the space of isomorphisms of $X$. In particular, the Dwyer--Kan equivalences $f\colon X\to Y$ between complete Segal spaces are precisely the Reedy weak equivalences, so that the distinction mentioned above vanishes.
Moreover, for any Segal space $X$, the completion map $X\to \widetilde{X}$ (a fibrant replacement in $\ssp_{\textrm{CSS}}$) can be chosen to be a Dwyer--Kan equivalence, showing that the Dwyer--Kan equivalences are precisely those maps of Segal spaces that become weak equivalences in $\ssp_{\textrm{CSS}}$.

Recently, a new equivalent model for $\infty$-categories has been introduced by Moser and Nuiten in~\cite{moser2024}, again based on Segal spaces. Instead of further localizing to obtain complete Segal spaces, this approach builds from scratch, using a construction described in \cite{guetta2023}, a model structure $\sspcat$ on the category of simplicial spaces. In this model, the fibrant objects are precisely the Segal spaces (not necessarily complete), but there are fewer cofibrations than in the Segal model structure. The cofibrant objects are those simplicial spaces $X$ such that $X_0$ is weakly equivalent to a set, so that the fibrant-cofibrant objects are the Segal spaces whose space of objects is, up to homotopy, a set. Moreover, the weak equivalences between Segal spaces in this model are precisely the Dwyer--Kan equivalences.

Thus, this provides another solution to the compatibility problem described above: rather than adding all the homotopical information to $X_0$, as in the passage to complete Segal spaces, one instead removes it entirely.
This appears to parallel the contrast between the set-theoretic and univalent perspectives in type theory: in the former there is only a single mode of identification, namely equality, whereas in the latter the identity type is maximally rich, encompassing all equivalences. Furthermore, this phenomenon already arises in the definition of 1-categories within homotopy type theory, giving rise to the notions of precategory, univalent category, and strict category (as defined in \cite{AHRENS_2015}), which are essentially the 1-categorical analogues of Segal spaces, complete Segal spaces, and the cofibrant Segal spaces in $\sspcat$, respectively. It should be noted that, in general, univalent categories seem to be a better-behaved notion when working in univalent foundations, as for instance the statement that an essentially surjective and fully faithful functor is an equivalence always holds regardless of the validity of the axiom of choice, whereas for strict categories the former and the latter are equivalent (see \cite{AHRENS_2015}). This suggests that the univalent point of view might be more convenient for $\infty$-categories as well. If, however, one wishes to develop their $\infty$-category theory in set-theoretic foundations, it might be the case that $\sspcat$ ends up being technically more convenient: even though complete Segal spaces better align with the simplicial homotopy theory used to define them, it is in practice easier to turn the space of objects of a Segal space into a set (to get a cofibrant-fibrant object in $\sspcat$) than it is to perform the completion step (to get a complete Segal space).

In order to generalize the above ideas from categories to operads, Moerdijk and Weiss introduced the category of trees $\Omega$ in \cite{Moerdijk_2007}. This category contains $\Delta$ as a full subcategory, and its presheaf category naturally provides the setting in which the nerves of operads live. Generalizing what happens in the categorical case, operads can be redefined as dendroidal sets $X$ such that, for every tree $T$, the maps
$$
\dset(T,X)\longrightarrow \dset(\spn{T},X)
$$
are isomorphisms, where $\spn{T}$ denotes an appropriate notion of spine of a tree. Following the same principle as before gives rise to two established models of $\infty$-operads: quasi-operads and complete dendroidal Segal spaces. Moreover, these are compatible with their simplicial counterparts, in the sense that slicing over the dendroidal set or dendroidal space represented by the trivial tree $\eta$ recovers the model structures of quasi-categories and complete Segal spaces.

The main goal of this paper is to establish a new model structure $\dsp_{\rm Op}$ on the category of dendroidal spaces, which generalizes and is compatible with the model structure $\ssp_{\rm Cat}$. In this model, the fibrant objects are the dendroidal Segal spaces, and the cofibrations form a subclass of those in the Segal space model structure.
Our main result can be summarized as follows (see Theorem~\ref{thm:main_existence} and Theorem~\ref{thm:main_quilleneq}):

\begin{thm}\label{thm:main}
There exists a cofibrantly generated model structure on $\dsp$, which we denote by $\dsp_{\rm Op}$, such that:
\begin{itemize}
\item[{\rm (i)}] The cofibrations are the normal monomorphisms $f\colon X\to Y$ such that there exists a set $R$ and a weak equivalence $X_{\eta}\coprod R\to Y_{\eta}$ extending $f_{\eta}$.
\item[{\rm (ii)}] The fibrant objects are the dendroidal Segal spaces.
\item[{\rm (iii)}] The weak equivalences between fibrant objects are the Dwyer--Kan equivalences.
\item[{\rm (iv)}] The fibrations between fibrant objects are the isofibrations.
\end{itemize}
Moreover, this model structure is Quillen equivalent to the Cisinski--Moerdijk model structures on dendroidal sets and dendroidal spaces, thereby providing yet another model for the theory of $\infty$\nobreakdash-operads.
\end{thm}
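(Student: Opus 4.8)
The plan is to transport the Moser--Nuiten construction of $\sspcat$ to the dendroidal world, with the trivial tree $\eta$ taking over the role played by the object $[0]\in\Delta$, and to split the argument into an existence part (Theorem~\ref{thm:main_existence}) and a Quillen-equivalence part (Theorem~\ref{thm:main_quilleneq}). For existence I would appeal to the recognition theorem for combinatorial model categories, which requires a set $I$ of generating cofibrations, an accessible class $W$ of weak equivalences that is closed under retracts and satisfies two-out-of-three, and the compatibility conditions that every map with the right lifting property against $I$ lies in $W$ and that the intersection of $W$ with the saturation of $I$ is closed under pushout and transfinite composition; the generating acyclic cofibrations $J$ are then extracted from $W$ and $I$ by accessibility. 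I would define $W$ intrinsically as the class of maps $f$ such that $Rf$ is a Dwyer--Kan equivalence, where $R$ is the functorial Segal fibrant replacement in $\dspsegal$ and a Dwyer--Kan equivalence is the essential-surjectivity-plus-full-faithfulness notion for dendroidal Segal spaces. This definition is manifestly independent of the structure being built, and it will immediately yield item (iii).

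For $I$ I would take the normal boundary inclusions $\partial\om{T}\hookrightarrow\om{T}$ for $T\ne\eta$ together with a modified family at the level of $\eta$, engineered so that the saturation of $I$ is exactly the class described in (i): the clause that there exist a set $R$ and a weak equivalence $X_{\eta}\coprod R\to Y_{\eta}$ extending $f_\eta$ is the dendroidal transcription of the ``$X_0$ is homotopically a set'' constraint that cuts down the cofibrations of $\sspcat$. The heart of the existence argument is then verifying the two compatibility conditions above for this unusual cofibration class. Once they hold, item (ii) follows by identifying the fibrant objects with those having the right lifting property against $J$; one checks that the Segal core inclusions $\spn{T}\hookrightarrow\om{T}$ (which are normal monomorphisms satisfying the $\eta$-condition vacuously, with $R=\varnothing$) generate the relevant acyclic cofibrations up to saturation by the Kan--Quillen generators, so that $J$-injective objects are precisely the dendroidal Segal spaces. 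Item (iii) is the restriction of the definition of $W$ to fibrant objects, on which $R$ acts as a levelwise equivalence; item (iv) follows from an explicit characterization of the fibrations between Segal spaces as the isofibrations, in direct analogy with the simplicial statement for $\sspcat$.

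For the Quillen-equivalence part I would compare $\dspop$ with the complete dendroidal Segal space model structure $\dspcss$ through the identity functor $\dspop\to\dspcss$. This is left Quillen because the cofibrations of $\dspop$ are contained in the normal monomorphisms, and because the weak equivalences of $\dspop$ (Dwyer--Kan equivalences after Segal replacement) are contained in the weak equivalences of $\dspcss$; hence it preserves both cofibrations and acyclic cofibrations. That it is a Quillen equivalence rests on the dendroidal analogue of the fact recalled in the introduction, namely that a map between dendroidal Segal spaces is a Dwyer--Kan equivalence precisely when it becomes a weak equivalence in $\dspcss$, together with the fact that the completion map of a dendroidal Segal space is a Dwyer--Kan equivalence; these make the derived unit and counit equivalences. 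Composing with the Cisinski--Moerdijk Quillen equivalence between $\dspcss$ and the operadic model structure on $\dset$ then yields the comparison with dendroidal sets.

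I expect the main obstacle to be the existence part, and within it the verification that the intersection of $W$ with the saturation of $I$ is closed under pushout and transfinite composition. This is precisely where the two non-standard features of the construction interact: the cofibrations are restricted by a condition on the color-object $\eta$, while the weak equivalences are the Dwyer--Kan equivalences rather than the levelwise (Reedy) equivalences, so neither the ambient Reedy structure nor Cisinski's localizer machinery applies directly. Establishing stability of the acyclic cofibrations under the generating operations, and confirming that the restricted cofibrations still detect exactly the dendroidal Segal spaces as fibrant objects, is the delicate combinatorial core on which the rest of the theorem depends.
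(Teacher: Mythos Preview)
Your overall architecture is sound, but the route differs from the paper's and two of the steps you flag as routine are in fact where the real work lies.

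The paper does \emph{not} use a Smith-type recognition theorem. It invokes the fibrantly-induced recognition theorem of Guetta--Moser--Sarazola (Theorem~\ref{th:fibrantly-induced}): one specifies both $\mathcal{I}$ and an explicit set $\mathcal{J}$ of anodyne maps, together with a class $\mathcal{W}_f$ defined only between $\mathcal{J}$-fibrant objects, and verifies five conditions. The paper takes $\mathcal{J}$ to be the Reedy and Segal generators together with the endpoint inclusion $\eta\to i_!NI[1]$; this makes ``$\mathcal{J}$-fibrant $=$ Segal'' and ``$\mathcal{J}$-fibration between Segal spaces $=$ isofibration'' hold \emph{by construction}, so items (ii) and (iv) are immediate rather than deduced a posteriori. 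The substantive condition is (V), that every $\mathcal{I}$-fibration admits a fibrant replacement which is a Dwyer--Kan equivalence, and this is proved via a dedicated technical lemma (Proposition~\ref{or:oullback_cosk_left_Quillen}) showing that pullback along $\cosk_\eta(p)$, for $p$ a surjective Kan fibration, is left Quillen for $\dspsegal$. There is no point at which the paper needs to check closure of acyclic cofibrations under pushouts.

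In your Smith-style plan there are two genuine gaps. First, you name the closure of $W\cap I\text{-cof}$ under pushout and transfinite composition as the main obstacle but give no mechanism for it. A workable strategy exists: prove directly that your $W$ coincides with the weak equivalences of $\dspcss$ (this is Lemma~\ref{lm:weak_eq_are_the_same}, whose proof does not rely on $\dspop$ already existing); then $W\cap I\text{-cof}$ sits inside the acyclic cofibrations of an existing model structure, and closure follows. Without this identification the step is open. Second, in Smith's framework the fibrant objects are \emph{determined}, not prescribed, so your sentence ``one checks that the Segal core inclusions generate the relevant acyclic cofibrations\dots'' is not a check but a theorem: you must show that every Segal space has the right lifting property against \emph{all} acyclic cofibrations of $\dspop$, and since $W$ strictly contains the Reedy equivalences this is not a consequence of the Segal model structure. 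This is precisely the difficulty the fibrantly-induced theorem is designed to bypass. You also do not address how to verify $I\text{-inj}\subset W$ for maps between non-fibrant objects; in the paper this is condition (V) and absorbs the technical core of the existence proof.

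Your Quillen-equivalence argument is correct and close to the paper's, though the paper gets a slightly sharper statement: it shows the weak equivalences of $\dspop$ and $\dspcss$ are \emph{equal} (Lemma~\ref{lm:weak_eq_are_the_same}), from which the Quillen equivalence of the identity adjunction is immediate and left properness of $\dspop$ follows for free.
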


This result closely parallels the main theorem of~\cite{moser2024}, with adaptations reflecting the transition from the simplicial to the dendroidal setting. In particular, $\dspop$ is homotopically enriched over the model structure $\sspcat$ constructed in~\cite{moser2024}, and slicing the former over the dendroidal space $\Omega[\eta]$ recovers the latter.
While the strategy of the proof in the dendroidal setting is inspired by the simplicial case, new difficulties arise that require careful treatment, and several arguments must be refined or extended to accommodate the dendroidal situation. In this way, the present work provides a genuine extension of the methods of \cite{moser2024}, establishing their full dendroidal analogue.

\bigskip
\noindent\textbf{Organization of the paper.} The paper is organized as follows.
Section~2 recalls the necessary background on simplicial and dendroidal sets and spaces, together with the categorical and model-categorical constructions that will be used in the sequel.
In Section~3, we introduce dendroidal Segal spaces and develop their fundamental homotopy-theoretic properties, including the notions of Dwyer--Kan equivalence and isofibration.
Section~4 is devoted to the construction of the model structure 
$\dsp_{\rm Op}$ and the verification of the hypotheses required for its existence.
In Section~5, we establish the Quillen equivalences with the Cisinski--Moerdijk model structures on dendroidal sets and dendroidal spaces, thereby completing the comparison with existing models of $\infty$-operads. Finally, Section 6 applies the comparison results of the previous sections to homotopy limits and colimits. We show that, for suitable diagrams, homotopy limits and colimits computed in $\dspop$ coincide with those computed in $\dspcss$, allowing one to avoid the completion step in many cases.

\bigskip
\noindent\textbf{Acknowledgements.}
The authors would like to thank Joost Nuiten for his availability in explaining and discussing details of several of the proofs in \cite{moser2024}.
This work was supported by MCIN/AEI under I+D+i grant PID2024-155646NB-I00 and by the Departament de Recerca i Universitats de la Generalitat de Catalunya with
reference 2021 SGR 00697. The first author is supported by the PhD grant 2023.02597.BD awarded by Fundação para a Ciência e Tecnologia.

\section{Preliminaries}
In this section, we introduce the fundamental notation and review the essential facts concerning simplicial and dendroidal sets and spaces. For further background and a more comprehensive treatment of these notions, we refer the reader to \cite{Cisinski_Moerdijk_1,Cisinski_Moerdijk_2,Heuts2022,Moerdijk_2007}.

\subsection{Trees} Let $\Omega$ denote the category of trees of Moerdijk--Weiss~\cite{Moerdijk_2007}. In this category there are a few basic trees which we will use repeatedly. First, there is the \emph{trivial tree} $\eta$, which is the only tree with zero vertices and one edge. Then for each $k\in\bb{N}$ there is the \emph{$k$-corolla} $C_k$, which is the tree with one vertex and $k$ leaves. Any tree can be constructed in an essentially unique way from corollas by \emph{grafting}: if $T$ and $S$ are trees and $e$ is an edge of $S$, then $S\circ_e T$ is the tree obtained by identifying the root of $T$ with the leaf $e$ of $S$. The two-sided identity for this operation is then of course the trivial tree $\eta$. Finally, any tree $T$ generates an operad $\Omega(T)$, which is free with generators given by its decomposition in corollas. Arrows $T\to S$ in $\Omega$ are maps of operads $\Omega(T) \to \Omega(S)$.

\subsection{Simplicial and dendroidal sets and spaces} Let $\sset$ denote the category of presheaves on $\Delta$ and $\dset$ the category of presheaves on $\Omega$. These are the categories of simplicial sets and dendroidal sets, respectively. We denote the representable presheaves in $\sset$ by $\std{n}$, for every $n\ge 0$, that is, $\std{n}=\Delta(-,[n])$. Similarly, the representable presheaves in $\dset$ are denoted by $\om{T}$ for every tree $T$ in $\Omega$. 
Oftentimes we write $T$ instead of $\om{T}$ to avoid cumbersome notation; this mild abuse is harmless thanks to the Yoneda lemma. We also use the standard notation $\partial\std{n}$ and $\horn{k}{n}$ for the boundary and $k$-horn of the representable $\std{n}$, and $\partial T$ and $\spn{T}$ for the boundary and spine of the representable dendroidal set $\om{T}$. Simplicial (resp. dendroidal) sets satisfying the inner horn filling condition will be called quasi-categories (resp. quasi-operads).

The inclusion $i:\Delta\to\Omega$ induces an adjoint triple between the corresponding presheaf categories
\begin{equation*}
    \begin{tikzcd}
    \sset \arrow[r, "i_!", bend left=49] \arrow[r, "i_*"', bend right=49] & \dset. \arrow[l, "i^*"']
    \end{tikzcd}
\end{equation*}
Under the identifications $\sset \cong \dset_{/\om{\eta}}$ and $\dset \cong \dset_{/*}$, the adjoint triple above can be interpreted as the pullback functor along the unique map $\om{\eta} \to *$, together with its left and right adjoints. The same description remains valid after replacing $\set$ with $\sset$; explicitly,
$$
\begin{tikzcd}
    \ssp \arrow[r, "i_!", bend left=49] \arrow[r, "i_*"', bend right=49] &
    \dsp \arrow[l, "i^*"']
\end{tikzcd}
$$
is again the adjoint triple associated to the equivalence
$$
    \ssp \cong (\dsp)_{/\om{\eta}},
$$
where $\om{\eta}$ is now viewed as a discrete dendroidal space. In particular, this yields a natural isomorphism
$$
    \om{\eta} \times X \cong i_! i^* X.
$$

\subsubsection{Vertical and horizontal directions} An object of $\dsp$ can be viewed in three equivalent ways: as a simplicial dendroidal set, as a dendroidal simplicial set, or as a presheaf on $\Omega \times \Delta$. One may picture such an object as a two–dimensional array of sets, with the $\Omega$–structure acting horizontally and the $\Delta$-structure acting vertically. Although the choice of which direction is designated horizontal or vertical is arbitrary, we fix it for the remainder of this text, as it helps to articulate arguments more clearly.

\subsubsection{Discrete and constant dendroidal spaces} There are inclusions of categories
$$
    \sset \xhookrightarrow{i_v} \dsp \xhookleftarrow{i_h} \dset
$$
which we refer to as the \emph{vertical} and \emph{horizontal} inclusions. According to the conventions just introduced, the vertical inclusion $i_v$ embeds a simplicial set as a dendroidal space that is horizontally constant, while the horizontal inclusion $i_h$ embeds a dendroidal set as a dendroidal space that is vertically discrete. The relationships between these various presheaf categories may be summarized in the following diagram:
\begin{equation}\label{eq:pshv}
    \begin{tikzcd}
                                 &                                                                     & \Delta \arrow[d, "y_\Delta"] \\
                                 & \mathbf{\set} \arrow[r, "\delta_\Delta"] \arrow[d, "\delta_\Omega"] & \sset \arrow[d, "i_v"]       \\
    \Omega \arrow[r, "y_\Omega"] & \dset \arrow[r, "i_h"] \arrow[d, "i^*"]                             & \dsp \arrow[d, "i^*"]        \\
    \Delta \arrow[r, "y_\Delta"] & \sset \arrow[r, "i_h"']                                             & \ssp                        
    \end{tikzcd}
\end{equation}
The functors appearing in the two squares are precomposition functors of the form $\set^{J} \to \set^{I}$ induced by a functor $I \to J$. By standard properties of presheaf categories, each such functor admits both a left and a right adjoint. We single out the left adjoint of the horizontal inclusion, which is the functor $(\pi_0)_*$ given by post-composition by $\pi_0:\sset\to \set$.

\subsubsection{Representables and simplicial enrichment}
Since $\dsp$ is a presheaf category, it is in particular cartesian closed, with limits and colimits computed pointwise. For an object $(T,n) \in \Omega \times \Delta$, the presheaf $P$ represented by $(T,n)$ is given by
$$
    P_{S,m}
    = (\Omega \times \Delta)\bigl((S,m),(T,n)\bigr)
    = \Omega(S,T) \times \Delta(m,n)
    = \Omega[T]_S \times \Delta[n]_m.
$$
Thus $P$ is naturally isomorphic to $i_h(\Omega[T]) \times i_v(\Delta[n])$, where the product is taken in $\dsp$. From now on we omit explicit mention of $i_h$ and freely regard dendroidal sets as (discrete) objects of $\dsp$
The vertical inclusion $i_v : \sset \to \dsp$ together with cartesian-closedness of $\dsp$ give it the structure of a simplicially tensored and cotensored category, so that we get a two-variable adjunction
$$
    \dsp(X \times i_v(S), Y)
    \cong
    \sset(S, \Map{X}{Y})
    \cong
    \dsp\bigl(X,\, Y^{\,i_v(S)}\bigr),
$$
From now on we write the simplicial tensor $X\times i_v(S)$ as $X\cdot S$. Thus in particular
$$
    \Map{X}{Y}_n
    = \dsp(X \cdot \Delta[n],\, Y).
$$
Note also that representables may be written as $T \cdot \Delta[n]$, and thus
$$
    \Map{T}{X} \cong X_T.
$$
In particular, for any dendroidal subset of $T$, for example $\partial T$, we write $X_{\partial T}$ for $\Map{\partial T}{X}$.

All of the above constructions carry over verbatim to simplicial spaces, and they are compatible with the adjunction $i_! \dashv i^*$. In fact, $i^*(X \cdot S) \cong i^*(X) \cdot S$ and more importantly $i_!(X \cdot S)\cong i_!(X) \cdot S$, so that the adjunction $i_! \dashv i^*$ can be promoted to an enriched one:
$$
    \Map{i_! X}{Y}\cong\Map{X}{i^* Y}.
$$
As a consequence, we do not distinguish notationally between the dendroidal and simplicial mapping space constructions, although we will indicate the ambient category when necessary. Notably, for simplicial spaces $X$ and $Y$ we have
$$
    \Map{i_! X}{i_! Y} \cong\Map{X}{Y},
$$
which further justifies this mild abuse of notation.

\subsubsection{Skeleta and coskeleta} Let $\Delta_{\leq n}$ denote the full category of $\Delta$ whose objects are $[m]$ for $m\le n$. Similarly, let $\Omega_{\leq n}$ denote the full subcategory of $\Omega$ consisting of trees with $n$ or less vertices. The inclusions $\Delta_{\leq n} \to \Delta$ and $\Omega_{\leq n} \to \Omega$ induce restriction functors between the corresponding (simplicial) presheaf categories, and these restriction functors admit both left and right adjoints given by left and right Kan extension. We will be particularly interested in the dendroidal case with $n = 0$, in which the functor
$$
    \dsp \longrightarrow \sset^{\Omega_{\leq 0}^{\mathrm{op}}} \cong \sset
$$
is simply evaluation at~$\eta$. Its left and right adjoints will be denoted by $\operatorname{sk}_\eta$ and $\operatorname{cosk}_\eta$, respectively. A straightforward computation shows that, for a simplicial set $S$, these are given by
$$
    (\operatorname{sk}_\eta S)_T =
    \begin{cases}
        S & \text{if $T$ is linear,} \\
        \emptyset & \text{otherwise,}
    \end{cases}
$$
and
$$
    (\operatorname{cosk}_\eta S)_T
    =\!\!\!
    \prod_{\Omega(\eta,\,T)} S.
$$

\subsubsection{Normal monomorphisms} A monomorphism of dendroidal sets $f:X\xhookrightarrow{} Y$ is said to be \emph{normal} if for each tree $T$, the group $\text{Aut}(T)$ acts freely on $Y_T\setminus f(X_T)$, and a dendroidal set $X$ is said to be normal if the monomorphism $\emptyset\to X$ is normal. These monomorphisms admit a canonical ``skeletal'' filtration, which is a factorization of $f$ as a composition
\begin{equation*}
    X=X_0\xhookrightarrow{i_1} X_1\xhookrightarrow{i_2} X_2\xhookrightarrow{}\ldots \xhookrightarrow{} \varinjlim_{k\in\bb{N}}X_k=Y
\end{equation*}
where each $i_k$ is a pushout of a coproduct of maps of the form $\partial T\xhookrightarrow{}T$ (which are themselves normal) for trees $T$ with $k$ vertices. The class of normal monomorphisms is the smallest saturated class containing all boundaries of trees $\partial T\to T$.
Likewise, a morphism of dendroidal \emph{spaces} is said to be normal if it induces a normal monomorphism of dendroidal sets at each simplicial degree.

\subsection{Categories, operads and their nerves} 
We denote the free-living isomorphism category as $I[1]$. We write $j_! : \mathbf{Cat} \to \mathbf{Op}$ for the full and faithful inclusion of the category of small categories into the category of operads. We have a commutative square of adjoint functors (with left adjoints oriented from left to right and from top to bottom):
\begin{equation*}
    \begin{tikzcd}
    \sset \arrow[d, "i_!"', shift right] \arrow[r, "\tau", shift left]   & \mathbf{Cat} \arrow[d, "j_!"', shift right] \arrow[l, "N", shift left]  \\
    \dset \arrow[r, "\tau_d", shift left] \arrow[u, "i^*"', shift right] & \mathbf{Op}. \arrow[u, "j^*"', shift right] \arrow[l, "N_d", shift left]
    \end{tikzcd}
\end{equation*}
The horizontal pairs are the usual nerve-realization adjunctions obtained by Kan extension associated to the standard cosimplicial object in $\mathbf{Cat}$ and the codendroidal object in $\mathbf{Op}$, respectively.

Besides the commutation isomorphisms for the left/right adjoints $j_!\tau\cong\tau_di_!$ and $Nj^*\cong i^*N_d$, we also have the compatibility $N_d j_!\cong i_!N$. The identity $j^* \tau_d \cong \tau\, i^*$ does not hold in general, although it becomes valid when restricted to the full subcategory of quasi-operads.
Composing the square above with the bottom square of~\eqref{eq:pshv} yields the (discrete) nerve functors from operads (resp.\ categories) to dendroidal (resp.\ simplicial) spaces, which we denote with the same symbols, together with left adjoints of the form $\tau \circ \pi_0$.

\subsection{Tensor product of dendroidal sets and spaces}

The category $\dset$ carries a tensor product functor
$$
    \otimes\colon\dset \times \dset \longrightarrow \dset
$$
characterized as the essentially unique bifunctor that preserves colimits in each variable and sends a pair of representables $(T,S)$ to the nerve of the Boardman--Vogt tensor product of the corresponding operads:
$$
    T \otimes S = N_d\bigl(\Omega(T) \otimes_{\mathrm{BV}} \Omega(S)\bigr).
$$
This tensor product is symmetric and has the presheaf represented by the trivial tree $\eta$ as a unit. Extending the construction to dendroidal spaces, we define
$$
    \otimes\colon \dsp \times \dsp \longrightarrow \dsp
$$
as the pointwise tensor product in $\Delta^{\rm op}$, that is,
$$
    (X \otimes Y)_{T,n}
    =
    \bigl(X_{-,n} \otimes Y_{-,n}\bigr)_T.
$$
This product is again symmetric and preserves colimits in each variable, since all colimits are computed termwise. Consequently, it induces a two-variable adjunction
\begin{equation}\label{eq:adj_otimes}
    \dsp(X \otimes Y,\, Z)
    \cong
    \dsp\bigl(X,\, Z^{\{Y\}}\bigr)
    \cong
    \dsp\bigl(Y,\, Z^{\{X\}}\bigr),
\end{equation}
where
$$
    \bigl(Z^{\{Y\}}\bigr)_{T,n}
    =
    \dsp\bigl(Y \otimes (T \cdot \Delta[n]), Z\bigr).
$$
Moreover, since $(X \otimes Y)_{-,n}$ depends only on $X_{-,n}$ and $Y_{-,n}$, and since $(X \cdot S)_{-,n}\cong\coprod_{S_n} X_{-,n}$, 
the fact that $\otimes$ preserves colimits in each variable implies that it also preserves the simplicial tensor in each variable. Thus we obtain an enriched adjunction
$$
    \Map{X \otimes Y}{Z}
    \cong
    \Map{X}{Z^{\{Y\}}}
    \cong
    \Map{Y}{Z^{\{X\}}},
$$
and in particular $(Z^{\{X\}})_{\eta}=\Map{X}{Z}$.
Finally, we note that $i^*(Y^{\{\,i_! X\,\}})\cong (i^*Y)^X$, where the latter is the exponential object in $\ssp$.

\subsubsection{Monoidality and weak enrichment}

The tensor product $\otimes$ does not form part of a closed symmetric monoidal structure on $\dset$ or $\dsp$. In fact, it satisfies all required properties except for the existence of associator isomorphisms. Nevertheless, it extends to endow $\dset$ with the weaker structure of a colax symmetric monoidal category \cite[Definition~6.3.1]{Heuts2015}. Concretely, this consists of a category together with a specified $n$-fold tensor product funtor $\otimes_n$ for each $n$, together with natural maps
$$
\begin{tikzcd}
\otimes_n(X_1,\ldots,X_{i-1},\otimes_m(Y_1,\ldots,Y_m),X_{i+1},\ldots,X_n) \arrow[d] \\
\otimes_{m+n-1}(X_1,\ldots,X_{i-1},Y_1,\ldots,Y_m,X_{i+1},\ldots,X_n)
\end{tikzcd}
\label{eq:associator}
$$
satisfying the evident coherence conditions for composites of such maps, and such that each $\otimes_n$ is part of an $(n+1)$-variable adjunction. Moreover, the restricted adjunction
$$
\dsp(X\otimes i_!M,Y)\cong\dsp(X,\expd{Y}{i_!M})\cong\ssp(M,i^*(\expd{Y}{X}))
$$
provides $\dsp$ with a hom-object $i^*(\expd{Y}{X})$ internal to $\ssp$. However, due to the same failure of associativity, this adjunction does not extend to an $\ssp$-enriched adjunction, and hence the resulting enrichment is neither tensored nor cotensored. A weaker statement still holds: $\dsp$ is \emph{weakly} enriched, tensored and cotensored over $\ssp$ in the sense of \cite[Section~3.5]{Heuts2015} (a more accurate terminology would perhaps be “enriched and weakly tensored/cotensored”). In brief, this weak enrichment amounts to the existence of natural maps
\begin{equation}
\alpha_{M,N,X} :i_!(M\times N)\otimes X\to i_!M\otimes(i_!N\otimes X)
\label{eq:weak_enrichment}
\end{equation}
satisfying coherence conditions, including a directed form of Mac Lane’s pentagon. Note that in the presence of a genuine tensor or cotensor, these maps exist and are isomorphisms.

\subsection{Segal pre-operads}\label{sec:segal_preop}

A \emph{Segal pre-operad} is a dendroidal space $X$ such that $X_\eta$ is a discrete simplicial set. These form a full subcategory of $\dsp$, which we denote by $\preop$.
The category $\preop$ can be identified with the category of presheaves on the localization of $\Omega \times \Delta$ obtained by inverting all morphisms of the form $(id_\eta,\alpha)\colon(\eta,[m])\to (\eta,[n])$. Writing this localization as $(\eta \times \Delta)^{-1}(\Omega \times \Delta)$, the localization functor
$$
\gamma\colon \Omega \times \Delta \longrightarrow (\eta \times \Delta)^{-1}(\Omega \times \Delta)
$$
induces a functor on presheaf categories
\begin{equation*}
    \gamma^*\colon \preop \longrightarrow \dsp,
\end{equation*}
which admits both a left adjoint $\gamma_!$ and a right adjoint $\gamma_*$. The functor $\gamma_*$ is given by the pullback
$$
    \begin{tikzcd}
        \gamma_* X \arrow[r] \arrow[d, hook] & \text{cosk}_\eta X_{\eta,0} \arrow[d, hook] \\
        X \arrow[r]                          & \text{cosk}_\eta X_\eta
    \end{tikzcd}
$$
and can be described as the maximal dendroidal subspace of $X$ which is discrete at $\eta$. The existence of the adjunction follows from the observation that the image of any map from a pre-operad $P$ to a dendroidal space $X$ is necessarily contained in the subspace $\gamma_*X$. The left adjoint $\gamma_!$ admits a dual description.

\subsection{Generalized Reedy categories and model structures}

Both $\Delta$ and $\Omega$ are examples of generalized Reedy categories in the sense of \cite{Berger_2010}. This notion extends the classical concept of a Reedy category. If $\mathbb{R}$ is a generalized Reedy category and $\mathcal{E}$ is a model category, then the functor category $\mathcal{E}^\mathbb{R}$ admits a Reedy-type model structure whenever $\mathcal{E}$ is such that either the projective or the injective model structure exists on $\mathcal{E}^{\mathrm{Aut}(\mathbb{R})}$ for every $r\in\mathbb{R}$; see \cite[Theorem~1.6]{Berger_2010}. In our setting we consider the Reedy model structure on $\dsp$ inherited from the model structure on $\sset$, using the generalized Reedy structure on $\Omega$.

Central to the definition of the model structure are the latching and matching morphisms, defined via appropriate (co)limits. For a map of dendroidal spaces $f\colon X\to Y$ and a tree $T$, the relative matching map at $T$ is
$$
    X_T \to X_{\partial T}\times_{Y_{\partial T}} Y_T,
$$
where $X_{\partial T}=\Map{\partial T}{X}$ as defined previously.

Since $\Omega$ is a generalized Reedy category, it determines a \emph{Reedy model structure} on $\dsp$, denoted $\dsp_R$, specified as follows:
\begin{itemize}
    \item[{\rm (i)}] The weak equivalences are the maps $w\colon X\to Y$ such that each $w_T\colon X_T\to Y_T$ is a weak equivalence of simplicial sets.
    \item[{\rm (ii)}] The fibrations are the maps $p\colon X\to Y$ for which the matching maps
    $$
        X_T \to X_{\partial T}\times_{Y_{\partial T}} Y_T
    $$
    are Kan fibrations for all trees $T$.
    \item[{\rm (iii)}] The cofibrations are the normal monomorphisms.
\end{itemize}

Using the adjunction $X\cdot(-)\dashv\Map{X}{-}\colon\sset\rightleftarrows\dsp$, one checks that a map $p\colon X\to Y$ is a Reedy fibration if and only if it has the right lifting property with respect to the maps
\begin{equation}
    \pop{\cdot}{\partial T}{T}{\horn{k}{n}}{\std{n}},
\label{eq:gen_triv_cof_Reedy}
\end{equation}
and it is a trivial Reedy fibration if and only if it lifts against
\begin{equation}
    \pop{\cdot}{\partial T}{T}{\partial\std{n}}{\std{n}}.
\label{eq:gen_cof_Reedy}
\end{equation}
These families are therefore generating sets of (trivial) Reedy cofibrations.

\section{Dendroidal Segal spaces}
In this section we recall the notion of dendroidal Segal spaces, which play for $\infty$-operads the role that Rezk’s Segal spaces play for $\infty$-categories. We begin by reviewing the Segal condition for dendroidal spaces and recalling its basic consequences. 
We then develop several fundamental homotopy-theoretic notions in this setting, most notably Dwyer–Kan equivalences and isofibrations, which will form the weak equivalences and fibrations between fibrant objects in the model structures constructed later on. For background on dendroidal Segal spaces, their model structures and foundational properties we refer the reader to the work of Cisinski–Moerdijk~\cite{Cisinski_Moerdijk_2} and to the detailed treatment in \cite{Heuts2022}.

\begin{defn}
    Let $X$ be a Reedy fibrant dendroidal space. We say that $X$ is a \emph{dendroidal Segal space} if for each tree $T$ the Segal map
    \begin{equation}\label{eq:segalmap}
        X_T\longrightarrow X_{\spn{T}}=\Map{\spn{T}}{X}
    \end{equation}
    induced by the inclusion $\spn{T}\to T$ is a weak equivalence in $\sset$.
\end{defn}

\begin{rem}
    For every Reedy fibrant dendroidal space $X$, the map \eqref{eq:segalmap} is already a fibration. Thus $X$ is Segal if and only if \eqref{eq:segalmap} is a \emph{trivial} fibration.
\end{rem}

This definition generalizes Rezk’s notion of a simplicial Segal space~\cite{Rezk_2001}. If $X$ is a simplicial space, then $X$ is Segal if and only if $i_!X$ is a dendroidal Segal space; conversely, if $Y$ is dendroidal Segal, then $i^*Y$ is a simplicial Segal space.

\begin{rem}
    By \cite[Proposition~6.39]{Heuts2022}, the definition above is equivalent to requiring that the maps induced by inner horn inclusions $\horn{e}{T}\to T$ are weak equivalences. Using the tensor/enrichment adjunction and the previous remark, this can further be reformulated as stating that the map $X\to *$ lifts against every map of the form
    \begin{equation}\label{eq:pop_Segal}
        \pop{\cdot}{\horn{e}{T}}{T}{\partial\std{n}}{\std{n}}.
    \end{equation}
\end{rem}

By taking the left Bousfield localization of the Reedy model structure at the set of all spine inclusions $\spn{T}\to T$ we obtain the following (see \cite[Section~5]{Cisinski_Moerdijk_2}):

\begin{thm}
    There is a cofibrantly generated model structure on $\dsp$, denoted by $\dspsegal$, such that:
    \begin{itemize}
        \item[{\rm (i)}] The cofibrations are the normal monomorphisms.
        \item[{\rm (ii)}] The fibrant objects are the dendroidal Segal spaces.
        \item[{\rm (iii)}] The weak equivalences (resp.\ fibrations) between fibrant objects are the levelwise weak equivalences (resp.\ Reedy fibrations).
    \end{itemize}
\end{thm}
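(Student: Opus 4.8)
The plan is to realize $\dspsegal$ as the left Bousfield localization of the Reedy model structure $\dsp_R$ at the set of spine inclusions, and then to read off the four listed properties from the general theory of such localizations. The first step is to record that $\dsp_R$ is left proper and combinatorial (equivalently, cellular). Both properties are built levelwise from the Kan--Quillen model structure on $\sset$ over the generalized Reedy category $\Omega$: the generating (trivial) cofibrations are the sets \eqref{eq:gen_triv_cof_Reedy}--\eqref{eq:gen_cof_Reedy}, which are assembled from the generators of $\sset$, and combinatoriality passes to the Reedy structure. Left properness can be checked levelwise, since Reedy weak equivalences are detected levelwise and pushouts in $\dsp$ are computed pointwise, reducing the claim to left properness of $\sset$.

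With these hypotheses in place, I would invoke the existence theorem for left Bousfield localizations (Hirschhorn's cellular version, or Barwick's in the combinatorial setting) applied to the set
$$
S = \{\, \spn{T}\hookrightarrow T \ :\ T\in\Omega \,\},
$$
which is genuinely a set because $\Omega$ is essentially small. This directly produces a cofibrantly generated model structure $\dspsegal$ whose cofibrations coincide with those of $\dsp_R$, namely the normal monomorphisms, giving~(i); whose weak equivalences are the $S$-local equivalences; and whose fibrant objects are the $S$-local objects.

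I would then identify the fibrant objects. By definition, an object $X$ is $S$-local precisely when it is Reedy fibrant and each induced map $\Map{T}{X}\to\Map{\spn{T}}{X}$ is a weak equivalence of simplicial sets. Under the identifications $\Map{T}{X}\cong X_T$ and $X_{\spn{T}}=\Map{\spn{T}}{X}$, this is exactly the Segal condition \eqref{eq:segalmap}, so the $S$-local objects are the dendroidal Segal spaces, establishing~(ii).

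Finally, (iii) follows from the standard behaviour of left Bousfield localizations. The identity functor $\dsp_R\to\dspsegal$ is left Quillen (it preserves cofibrations and sends $\dsp_R$-weak equivalences to $\dspsegal$-weak equivalences), so its right adjoint $\dspsegal\to\dsp_R$ preserves fibrations; hence every $\dspsegal$-fibration is already a Reedy fibration, and in particular so is every such fibration between Segal spaces. The converse — that a Reedy fibration between $S$-local objects is a $\dspsegal$-fibration — together with the statement that a map between fibrant objects is an $S$-local equivalence if and only if it is a levelwise weak equivalence, are the familiar characterizations of weak equivalences and fibrations between fibrant objects in a localization. I expect the main obstacle to be the very first step: carefully verifying that the generalized Reedy structure on $\Omega$ yields a left proper, combinatorial model structure on $\dsp$, since the nontrivial automorphisms of trees force one to work with the generalized rather than classical Reedy formalism of \cite{Berger_2010}. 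Once this foundation is secured, all the remaining identifications are formal consequences of the localization machinery.
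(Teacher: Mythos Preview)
Your proposal is correct and follows exactly the approach indicated in the paper: the model structure $\dspsegal$ is obtained as the left Bousfield localization of $\dsp_R$ at the spine inclusions, with the paper simply referring to \cite[Section~5]{Cisinski_Moerdijk_2} for the details. You have supplied more of the verification (left properness, combinatoriality, identification of $S$-local objects) than the paper does, but the strategy is identical.
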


\begin{rem}\label{rm:inner_an_triv_cof_segal}
    More generally, a map $f:A\to B$ in $\dsp$ is a weak equivalence in $\dspsegal$ if and only if for every dendroidal Segal space $X$, the induced map
    \begin{equation*}
        \Map{B}{X}\to\Map{A}{X}
    \end{equation*}
    is a trivial fibration of simplicial sets. In particular, all inner horn inclusions $\horn{e}{T}\to T$, and more generally all inner anodyne maps of dendroidal sets, are trivial cofibrations in $\dspsegal$.
\end{rem}

Recall from \ref{sec:segal_preop} the functor
\begin{equation*}
    \gamma_*:\dsp\to\preop
\end{equation*}
right adjoint to the inclusion $\gamma^*$. It sends a dendroidal space to its largest subspace that is discrete at $\eta$. Since $\gamma^*$ is fully faithful, we will denote $\gamma^*\gamma_*X$ by simply $\gamma_*X$ and regard $\gamma_*X$ as a dendroidal space. The following observation will be useful:

\begin{prop}\label{pr:gamma_*X_Segal}
    Let $X$ be a dendroidal Segal space. Then $\gamma_*X$ is also a dendroidal Segal space.
\end{prop}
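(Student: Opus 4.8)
The plan is to exploit the pullback presentation of $\gamma_*$ recalled in Section~\ref{sec:segal_preop}, which for $A:=X_\eta$ and its discrete set of vertices $A_0:=X_{\eta,0}$ expresses $\gamma_* X$ as the pullback
\begin{equation*}
    \begin{tikzcd}
        \gamma_* X \arrow[r] \arrow[d] & \cosk_\eta A_0 \arrow[d, "\cosk_\eta\iota"] \\
        X \arrow[r, "\eta_X"'] & \cosk_\eta A,
    \end{tikzcd}
\end{equation*}
where $\iota\colon A_0\hookrightarrow A$ is the inclusion and $\eta_X$ is the unit of $\operatorname{ev}_\eta\dashv\cosk_\eta$. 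Both the Reedy fibrancy and the Segal condition for $\gamma_* X$ will be deduced from the single key claim that \emph{$\eta_X$ is a Reedy fibration whenever $X$ is Reedy fibrant}.

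Everything hinges on the fact that $\cosk_\eta$ depends only on the edges of a tree: writing $E(T)=\Omega(\eta,T)$ for the set of edges, one has $(\cosk_\eta S)_T=S^{E(T)}$, and more generally $\Map{Z}{\cosk_\eta S}\cong S^{Z_\eta}$ for any dendroidal set $Z$. Since for a tree with at least one vertex the boundary inclusion $\partial T\hookrightarrow T$ and the spine inclusion $\spn T\hookrightarrow T$ are both bijective on edges (every edge lies in some proper face and in the spine), the maps they induce on $\cosk_\eta A$ are identities of $A^{E(T)}$. Consequently the relative matching map of $\eta_X$ at $T$,
\begin{equation*}
    X_T\longrightarrow X_{\partial T}\times_{(\cosk_\eta A)_{\partial T}}(\cosk_\eta A)_T\cong X_{\partial T},
\end{equation*}
coincides with the absolute matching map $X_T\to X_{\partial T}$ of $X$ (the case $T=\eta$ being immediate), which is a Kan fibration because $X$ is Reedy fibrant; hence $\eta_X$ is a Reedy fibration. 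Pulling it back along $\cosk_\eta\iota$ shows that $\gamma_* X\to\cosk_\eta A_0$ is a Reedy fibration. Since $A_0$ is discrete, hence a Kan complex, and $\cosk_\eta$ is right Quillen (being right adjoint to the left Quillen functor $\operatorname{ev}_\eta$), the object $\cosk_\eta A_0$ is Reedy fibrant, and therefore so is $\gamma_* X$.

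For the Segal condition I would apply the limit-preserving functors $\Map{T}{-}$ and $\Map{\spn T}{-}$ to the defining pullback square. Using $\Map{T}{X}=X_T$, $\Map{\spn T}{X}=X_{\spn T}$ and the identities $\Map{T}{\cosk_\eta A}\cong A^{E(T)}\cong\Map{\spn T}{\cosk_\eta A}$ (and likewise for $A_0$), the Segal map of $\gamma_* X$ is identified with the map of pullbacks
\begin{equation*}
    X_T\times_{A^{E(T)}}A_0^{E(T)}\longrightarrow X_{\spn T}\times_{A^{E(T)}}A_0^{E(T)}
\end{equation*}
induced by the Segal map $X_T\to X_{\spn T}$ of $X$, the maps at the two other vertices being isomorphisms. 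The legs $X_T\to A^{E(T)}$ and $X_{\spn T}\to A^{E(T)}$ are the images of the Reedy fibration $\eta_X$ under $\Map{T}{-}$ and $\Map{\spn T}{-}$; since $T$ and $\spn T$ are normal and $\dsp_R$ is a simplicial model category, both are Kan fibrations. Thus both squares are homotopy pullbacks, and since the Segal map $X_T\to X_{\spn T}$ is a weak equivalence, the gluing lemma for homotopy pullbacks (equivalently, right properness of $\sset$) shows that the induced map of pullbacks is a weak equivalence. As this is precisely the Segal map of $\gamma_* X$, the proof is complete.

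The main obstacle is the key claim that $\eta_X$ is a Reedy fibration, and specifically the edge-theoretic bookkeeping behind it: one must verify carefully that $\partial T$, $\spn T$ and $T$ all carry the same edge set, so that $\cosk_\eta$ of the relevant maps are identities and the coskeletal factor contributes nothing to the relative matching maps. Once this is established the remaining steps are formal, relying only on the stability of Reedy fibrations under pullback, the fact that $\cosk_\eta$ is right Quillen, and the simplicial (hence right proper) structure of the Reedy model category.
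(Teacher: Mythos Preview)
Your proof is correct and takes a genuinely different route from the paper's. The paper argues directly with lifting problems: for a generating Reedy trivial cofibration $\partial T\cdot\Delta[n]\cup T\cdot\Lambda^k[n]\to T\cdot\Delta[n]$ with $T\neq\eta$, the observation $(\partial T)_\eta=T_\eta$ means the map is an isomorphism at $\eta$, so any lift in $X$ automatically factors through the subobject $\gamma_*X\hookrightarrow X$; the case $T=\eta$ is handled by discreteness of $(\gamma_*X)_\eta$. The Segal condition is treated identically using $(\Lambda^e[T])_\eta=T_\eta$. Your argument instead works with the pullback description of $\gamma_*X$: you show the unit $\eta_X\colon X\to\cosk_\eta X_\eta$ is a Reedy fibration (its relative matching maps coincide with the absolute ones for $X$, by the same edge-set bookkeeping), then deduce Reedy fibrancy of $\gamma_*X$ from stability under pullback and right Quillenness of $\cosk_\eta$, and finally obtain the Segal condition via homotopy pullbacks. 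Both proofs rest on the same combinatorial fact that $\partial T$, $\spn T$ and $T$ share the same set of edges; the paper's is shorter and more elementary, while yours is more structural and in particular isolates the useful intermediate statement that $X\to\cosk_\eta X_\eta$ is a Reedy fibration (a fact the paper implicitly uses elsewhere, e.g.\ in Remark~\ref{rm:alt_fullyfaithful}). One small simplification available to you: since $X$ is Reedy fibrant, the Segal map $X_T\to X_{\spn T}$ is already a \emph{trivial fibration}, so its pullback along $A_0^{E(T)}\to A^{E(T)}$ is again a trivial fibration, and you can bypass the appeal to the gluing lemma.
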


\begin{proof}
    We first show that $\gamma_*X$ is Reedy fibrant, that is, $\gamma_*X\to *$ has the right lifting property with respect to the maps
    \begin{equation*}
        \pop{\cdot}{\partial T}{T}{\horn{k}{n}}{\std{n}}
    \end{equation*}
    for evert tree $T$ and every $n$. If $T=\eta$, then $(\gamma_*X)_\eta$ is discrete and therefore lifts against every horn. If $T\neq\eta$, then $(\partial T)_\eta=T_\eta$, so the map above is an isomorphism at $\eta$. Any lift in $X$ (which exists because $X$ is Reedy fibrant) automatically lands inside $\gamma_*X$, giving the required lift:
    \begin{equation*}
        \begin{tikzcd}
        \popd{\cdot}{\partial T}{T}{\partial\std{n}}{\std{n}} \arrow[r] \arrow[d]                    & \gamma_*X \arrow[r, hook] & X \\
        T\cdot \std{n}. \arrow[rru] \arrow[ru, dashed] &              &  
        \end{tikzcd}
    \end{equation*}

    To show that $\gamma_*X$ satisfies the Segal condition, consider an inner horn inclusion
    \begin{equation*}
        \pop{\cdot}{\horn{e}{T}}{T}{\partial\std{n}}{\std{n}}
    \end{equation*}
    with $T$ having at least two vertices. As before, $(\horn{e}{T})_\eta=T_\eta$, so the same argument shows that lifts in $X$ already lie in $\gamma_*X$, and hence $\gamma_*X$ is Segal.
\end{proof}

\subsection{Dendroidal Segal spaces as models of homotopy operads}

Given a dendroidal Segal space, one can define colours, multi-mapping spaces, and the basic operadic constructions.

\begin{defn}
    Let $X$ be a dendroidal Segal space.
    \begin{enumerate}
        \item[{\rm (i)}] The \emph{space of colours} of $X$ is the space $X_\eta$. Its vertices will be called \emph{colours}.
        
        \item[{\rm (ii)}] For any sequence $x_1,\ldots,x_k,y\in X_{\eta,0}$ of colours, the \emph{multi-mapping space} $\text{map}_X(x_1,\ldots,x_k;y)$ is the (homotopy) fiber of the fibration
        \begin{equation*}
            X_{C_k}\longrightarrow X_{\partial C_k}\cong \Bigl(\prod_{1\leq i\leq k}X_\eta\Bigr)\times X_\eta
        \end{equation*}
        over the pair $((x_1,\ldots,x_k),y)$.

        \item[{\rm (iii)}] Given a tree with two vertices $T=C_l\circ_e C_k$ obtained by grafting two corollas, the Segal condition on $X$ implies that
        \begin{equation*}
            X_T=\Map{T}{X}\longrightarrow \Map{\horn{e}{T}}{X}\cong X_{C_l}\times_{X_\eta}X_{C_k}
        \end{equation*}
        is a trivial fibration. Choosing a section and composing with the inner face map of $T$ yields a composition map
        \begin{equation}\label{eq:comp_segal}
            \text{comp}_{l,k}:X_{C_l}\times_{X_\eta}X_{C_k}\longrightarrow X_T\longrightarrow X_{C_{k+l-1}},
        \end{equation}
        which is unique up to homotopy and also associative and unital up to homotopy. In particular, for colours $x_1,\ldots,x_k,y_1,\ldots,y_l,z\in X_{\eta,0}$ and $1\leq i\leq l$, we obtain an induced map on multi-mapping spaces (fibers)
        $$            \text{map}_X(y_1,\ldots,y_l;z)\times \text{map}_X(x_1,\ldots,x_k;y_i)\longrightarrow
            \text{map}_X(y_1,\ldots,y_{i-1},x_1,\ldots,x_k,y_{i+1},\ldots,y_l;z).
        $$

        \item[{\rm (iv)}] The \emph{homotopy operad} of $X$ is the operad $\ho X$ with set of colours $X_{\eta,0}$ and operations
        $$
            \ho X(x_1,\ldots,x_k;y)=\pi_0\text{map}_X(x_1,\ldots,x_k;y).
        $$
        Applying $\pi_0$ to the induced maps on multi-mapping spaces yields strict associativity and unitality (since $\pi_0$ preserves finite products), so $\ho X$ is indeed an operad.
    \end{enumerate}
\end{defn}

A convenient way to describe the operadic structure of $\ho X$ is obtained by considering the dendroidal space $\gamma_*X$, which is Segal by Proposition~\ref{pr:gamma_*X_Segal}. Note that $(\gamma_*X)_\eta$ is the discrete space $X_{\eta,0}$ and
$$
    \mapds{\gamma_*X}{x}{k}{y}=\mapds{X}{x}{k}{y}.
$$
Recall that the horizontal inclusion $i_h:\dset\to\dsp$ admits a left adjoint $(\pi_0)_*$ given by taking connected components
\begin{equation*}
    ((\pi_0)_*X)_T=\pi_0(X_T).
\end{equation*}
In particular, $(\pi_0)_*\gamma_*X$ takes the value $X_{\eta,0}$ at $\eta$. Since $\gamma_*X$ is Segal, for every tree $T$ the map
\begin{equation}\label{eq:pi_0_Map}
    \pi_0\Map{T}{\gamma_*X}\longrightarrow \pi_0\Map{\spn{T}}{\gamma_*X}
\end{equation}
is a bijection. We have that $\pi_0\Map{T}{\gamma_*X}=\dset(T,(\pi_0)_*\gamma_* X)$, and that $\Map{\spn{T}}{\gamma_*X}$ is an iterated pullback of spaces of the form $\Map{C_k}{\gamma_*X}$ over the discrete space $(\gamma_*X)_\eta$. Since $\pi_0$ preserves both coproducts and finite products, it also preserves pullbacks over discrete spaces. Thus, we can identify \eqref{eq:pi_0_Map} with the map
$$
    \dset(T,(\pi_0)_*\gamma_*X)\longrightarrow \dset(\spn{T},(\pi_0)_*\gamma_*X).
$$
Hence $(\pi_0)_*\gamma_*X$ is a dendroidal strict inner Kan complex, that is, a dendroidal set having unique fillers for spines of trees. By \cite[Proposition~6.4]{Heuts2022}, it is isomorphic to the dendroidal nerve of an operad $P$, whose partial compositions are encoded by
$$
    \dset(\spn{C_l\circ C_k},N_dP)\xrightarrow{\cong}\dset(C_l\circ C_k,N_dP)\xrightarrow{d_e}\dset(C_{k+l-1},N_dP),
$$
which arise from \eqref{eq:comp_segal} after applying $\pi_0\circ\gamma_*$. This yields the following result, which in fact could have been used to define $\ho X$:

\begin{prop}\label{pr:description_ho}
    For any dendroidal Segal space $X$, there is a natural isomorphism of dendroidal sets
   $$
        (\pi_0)_*\gamma_*X\cong N_d\ho X.
    $$
\end{prop}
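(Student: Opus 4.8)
The plan is to harvest the preliminary discussion, which has already done most of the work, and then carry out the identification of the recovered operad with $\ho X$. Indeed, since $\gamma_*X$ is Segal (Proposition~\ref{pr:gamma_*X_Segal}) and $(\gamma_*X)_\eta = X_{\eta,0}$ is discrete, the dendroidal set $(\pi_0)_*\gamma_*X$ has unique fillers against every spine inclusion $\spn{T}\to T$; by \cite[Proposition~6.4]{Heuts2022} it is therefore isomorphic to $N_d P$ for a unique operad $P$. It then remains to exhibit a natural isomorphism $P\cong\ho X$ of operads, which I would do by matching the three pieces of data — colours, operations, and compositions — defining $P$ and $\ho X$.

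The colours of $P$ are the elements of $((\pi_0)_*\gamma_*X)_\eta = X_{\eta,0}$, which is exactly the colour set of $\ho X$. For the operations, the $k$-ary operations of $P$ with profile $(x_1,\ldots,x_k;y)$ are the elements of $((\pi_0)_*\gamma_*X)_{C_k}=\pi_0\bigl((\gamma_*X)_{C_k}\bigr)$ lying over $((x_1,\ldots,x_k),y)$. Since the base $(\gamma_*X)_{\partial C_k}$ is discrete (being a product of copies of $(\gamma_*X)_\eta=X_{\eta,0}$), the total space splits as a coproduct of its fibres and $\pi_0$ preserves coproducts; this set is therefore $\pi_0\mapds{\gamma_*X}{x}{k}{y}=\pi_0\mapds{X}{x}{k}{y}=\ho X(x_1,\ldots,x_k;y)$, using the identification of the multi-mapping spaces of $\gamma_*X$ and $X$ recorded above.

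The real content lies in matching the compositions, and this is the step I expect to be the main obstacle. The partial compositions of $P$ are those recovered by the nerve-recognition theorem, namely the maps $\dset(\spn{C_l\circ C_k},N_dP)\xrightarrow{\cong}\dset(C_l\circ C_k,N_dP)\xrightarrow{d_e}\dset(C_{k+l-1},N_dP)$; unwinding $N_dP=(\pi_0)_*\gamma_*X$, these are precisely $\pi_0$ applied to the composition maps $\mathrm{comp}_{l,k}$ of~\eqref{eq:comp_segal} for $\gamma_*X$. By construction the operadic structure of $\ho X$ is also obtained by applying $\pi_0$ to the induced maps on multi-mapping spaces, which are themselves read off from the same maps $\mathrm{comp}_{l,k}$; hence the two composition structures coincide. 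The careful bookkeeping here — tracing the Segal section, the inner face map $d_e$, and the commutation of $\pi_0$ with the relevant pullbacks over discrete bases, together with the fact that associativity and unitality hold strictly after applying $\pi_0$ — is where the work concentrates, everything else being formal. Naturality in $X$ finally follows because each ingredient ($\gamma_*$, $(\pi_0)_*$, the recognition equivalence of \cite[Proposition~6.4]{Heuts2022}, and $X\mapsto\ho X$) is functorial, so the resulting isomorphism $(\pi_0)_*\gamma_*X\cong N_d\ho X$ is natural.
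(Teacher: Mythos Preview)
Your proposal is correct and follows essentially the same approach as the paper: the proposition is stated as the outcome of the preceding discussion, and your write-up simply makes explicit the matching of colours, operations, and compositions (via $\pi_0$ of the maps $\mathrm{comp}_{l,k}$ in \eqref{eq:comp_segal}) that the paper leaves implicit. The only addition you make beyond the paper is the remark on naturality, which is indeed formal.
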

Since the dendroidal nerve is fully faithful, we obtain:

\begin{cor}\label{co:description_ho}
    Let $X$ be a dendroidal Segal space. Then the homotopy operad ${\rm\ho} X$ is isomorphic to $\tau_d(\pi_0)_*\gamma_*X$.
\end{cor}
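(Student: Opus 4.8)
The plan is to deduce this immediately from Proposition~\ref{pr:description_ho} together with the adjunction $\tau_d\dashv N_d$ appearing in the nerve--realization square. The only categorical input needed is the standard fact that a right adjoint is fully faithful if and only if the counit of the adjunction is a natural isomorphism; applied to our situation this will identify $\tau_d N_d$ with the identity functor on $\op$.

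First I would record that, in the adjoint pair $\tau_d\dashv N_d$ with $\tau_d\colon\dset\to\op$ and $N_d\colon\op\to\dset$, the nerve $N_d$ is the right adjoint. Since $N_d$ is fully faithful, the counit $\varepsilon\colon\tau_d N_d\Rightarrow\id_{\op}$ is a natural isomorphism, so that $\tau_d N_d P\cong P$ naturally in $P\in\op$. Next I would apply the colimit-preserving functor $\tau_d$ to the natural isomorphism of Proposition~\ref{pr:description_ho}, namely $(\pi_0)_*\gamma_* X\cong N_d\,\ho X$, obtaining
$$
    \tau_d(\pi_0)_*\gamma_* X\;\cong\;\tau_d N_d\,\ho X.
$$
Composing with the counit isomorphism $\tau_d N_d\,\ho X\cong\ho X$ then gives the desired identification $\tau_d(\pi_0)_*\gamma_* X\cong\ho X$, natural in $X$.

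There is essentially no obstacle here: the statement is a formal consequence of Proposition~\ref{pr:description_ho} and the full faithfulness of the dendroidal nerve. The only point requiring a moment's care is the orientation of the adjunction, that is, confirming that $N_d$ is the right adjoint so that it is the \emph{counit} $\tau_d N_d\Rightarrow\id$ (rather than the unit) which is inverted by full faithfulness; this is exactly what makes $\tau_d N_d\cong\id_{\op}$ and hence closes the argument.
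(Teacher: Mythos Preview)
Your proof is correct and follows exactly the same approach as the paper: the corollary is deduced from Proposition~\ref{pr:description_ho} by applying $\tau_d$ and using that the dendroidal nerve is fully faithful, so that $\tau_d N_d\cong\id_{\op}$. You have simply spelled out the one-line argument in more detail.
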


\begin{cor}\label{co:comut_j*_ho}
    Let $X$ be a dendroidal Segal space. Then $j^*\ho X\cong\ho(i^*X)$.
\end{cor}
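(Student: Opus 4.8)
The plan is to reduce the statement to two compatibilities of $i^*$ with the functors $\gamma_*$ and $(\pi_0)_*$, and then to invoke Corollary~\ref{co:description_ho} on each side. First I would use Corollary~\ref{co:description_ho} to write $\ho X\cong\tau_d(\pi_0)_*\gamma_*X$. By Proposition~\ref{pr:description_ho} the dendroidal set $(\pi_0)_*\gamma_*X\cong N_d\ho X$ is a dendroidal nerve, and hence a quasi-operad. Since the identity $j^*\tau_d\cong\tau\,i^*$ is valid on the full subcategory of quasi-operads, this gives
\begin{equation*}
    j^*\ho X\cong j^*\tau_d(\pi_0)_*\gamma_*X\cong\tau\,i^*(\pi_0)_*\gamma_*X.
\end{equation*}

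The only substantial point is the commutation $i^*\gamma_*\cong\gamma_*i^*$; the analogous $i^*(\pi_0)_*\cong(\pi_0)_*i^*$ is immediate, since both sides apply $\pi_0$ objectwise and restrict along $i\colon\Delta\hookrightarrow\Omega$. For the former I would argue that $i^*$ is computed objectwise, hence preserves limits, in particular the defining pullback of $\gamma_*$, so that $i^*\gamma_*X\cong i^*X\times_{i^*\cosk_\eta X_\eta}i^*\cosk_\eta X_{\eta,0}$. It then suffices to identify $i^*\cosk_\eta$ with the simplicial coskeleton $\cosk_0$. Using $(\cosk_\eta S)_T=\prod_{\Omega(\eta,T)}S$ together with $\eta=i[0]$ and the full faithfulness of $i$, one computes
\begin{equation*}
    (i^*\cosk_\eta S)_{[m]}=\prod_{\Omega(\eta,\,i[m])}S=\prod_{\Delta([0],[m])}S=(\cosk_0 S)_{[m]},
\end{equation*}
naturally in $[m]$, as the edges of the linear tree $i[m]$ correspond naturally to the vertices of $[m]$. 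Since $(i^*X)_0=X_\eta$ and $(i^*X)_{0,0}=X_{\eta,0}$, and since this isomorphism is natural in the inclusion $X_{\eta,0}\hookrightarrow X_\eta$ and compatible with the coskeleton units, the displayed pullback is exactly the one defining $\gamma_*(i^*X)$. I would conclude $i^*\gamma_*X\cong\gamma_*i^*X$.

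Combining these, $j^*\ho X\cong\tau\,(\pi_0)_*i^*\gamma_*X\cong\tau\,(\pi_0)_*\gamma_*i^*X$. Since $i^*X$ is a simplicial Segal space, the simplicial counterpart of Corollary~\ref{co:description_ho} (Rezk's description of the homotopy category of a Segal space) will identify the right-hand side with $\ho(i^*X)$, yielding the desired natural isomorphism. Alternatively, one could run the same computation after applying the fully faithful nerve $N$ and using $Nj^*\cong i^*N_d$. I expect the coskeleton/pullback compatibility $i^*\gamma_*\cong\gamma_*i^*$ to be the main obstacle; once it is established, the remaining steps are formal, relying only on the stated commutation isomorphisms for the nerve/realization functors and on the objectwise description of $(\pi_0)_*$. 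A minor point requiring care is that the isomorphism $i^*\cosk_\eta\cong\cosk_0$ must be checked to be natural in the map $X_{\eta,0}\hookrightarrow X_\eta$, so that the two pullback squares genuinely coincide.
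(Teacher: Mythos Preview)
Your proposal is correct and follows essentially the same approach as the paper: both reduce to the two commutations $i^*(\pi_0)_*\cong(\pi_0)_*i^*$ and $i^*\gamma_*\cong\gamma_*i^*$, then invoke the description of $\ho$ in terms of $(\pi_0)_*\gamma_*$. The only cosmetic difference is that the paper applies the fully faithful nerve $N$ first and uses $Nj^*\cong i^*N_d$ (precisely the alternative you mention at the end), whereas your primary route applies $j^*$ to $\tau_d(\pi_0)_*\gamma_*X$ and uses the restricted identity $j^*\tau_d\cong\tau\,i^*$ on quasi-operads; your justification of $i^*\gamma_*\cong\gamma_*i^*$ via the explicit coskeleton formula is more detailed than what the paper records.
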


\begin{proof}
    We have the following chain of isomorphisms
    \begin{equation*}       
    \begin{split}
           Nj^*\ho X &\cong i^*N_d\ho X \cong i^*((\pi_0)_*\gamma_*X)\\
           &\cong (\pi_0)_*i^*\gamma_*X
           \cong (\pi_0)_*\gamma_*i^*X \cong N\ho(i^*X).
      \end{split}
      \end{equation*}
    Since $N$ is fully faithful, the claim follows.
\end{proof}

\begin{rem}
    All the operadic notions introduced above for a dendroidal Segal space depend only on the Segal space $\gamma_*X$. In particular, $X$ and $\gamma_*X$ have the same set of objects, the same multi-mapping spaces, and the same homotopy category. It is therefore natural to regard both as models of the same $\infty$-operad, with $\gamma_*X\to X$ an equivalence of $\infty$-operads. However, this inclusion is not in general a weak equivalence in $\dspsegal$. This suggests a modification on the model structure so that such maps become weak equivalences.
\end{rem}

\subsection{Dwyer--Kan equivalences}

The following definition arises naturally from the parallels established above between dendroidal Segal spaces and operads.

\begin{defn}\label{df:DKeq}
    Let $f\colon X\to Y$ be a map of dendroidal Segal spaces. We say that $f$ is a \emph{Dwyer--Kan equivalence} if the following conditions hold:
    \begin{enumerate}
        \item[{\rm (i)}] For every sequence $x_1,\ldots,x_k,y\in X_{\eta,0}$, the induced map
        $$            \text{map}_X(x_1,\ldots,x_k;y)\longrightarrow \text{map}_Y\bigl(f(x_1),\ldots,f(x_k);f(y)\bigr)
        $$
        is a weak homotopy equivalence.
        \item[{\rm (ii)}] The induced map of operads $\ho X\to \ho Y$ is essentially surjective.
    \end{enumerate}
\end{defn}

A basic example is the following:

\begin{ex}\label{ex:gammaXtoX_DK}
    The inclusion $\gamma_*X\hookrightarrow X$
    induces isomorphisms on all multi-mapping spaces and is surjective on vertices at $\eta$. Hence it is a Dwyer--Kan equivalence.
\end{ex}

Any map satisfying condition (i) of Definition~\ref{df:DKeq} is called \emph{homotopically fully faithful}. We now record a useful characterization.

\begin{prop}\label{pr:char_fullyfaithful}
    For a map $f:X\to Y$ of dendroidal Segal spaces, the following are equivalent:
    \begin{enumerate}
        \item[{\rm (i)}]\label{it:1} The map $f$ is homotopically fully faithful.
        \item[{\rm (ii)}]\label{it:2} For each corolla $C_k$, the map
       $$
            X_{C_k}\longrightarrow Y_{C_k}\times_{Y_{\partial C_k}}X_{\partial C_k}
        $$    is a weak equivalence in $\sset$.
        \item[{\rm (iii)}]\label{it:3} For every tree $T$, the map
        $$
            X_T\longrightarrow Y_T\times_{(\text{cosk}_\eta Y_\eta)_T}(\text{cosk}_\eta X_\eta)_T
        $$
        is a weak equivalence in $\sset$;
        \item[{\rm(iv)}]\label{it:4} For every tree $T$ with at least one vertex, the map
        $$
            X_T\to Y_T\times_{Y_{\partial T}}X_{\partial T}
        $$
        is a weak equivalence in $\sset$.
    \end{enumerate}
\end{prop}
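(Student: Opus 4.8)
The plan is to establish the cycle of implications (i)~$\Leftrightarrow$~(ii), (ii)~$\Rightarrow$~(iii), (iii)~$\Rightarrow$~(iv), and (iv)~$\Rightarrow$~(ii), where the last implication is the immediate specialization $T=C_k$. The unifying principle is that each of (ii), (iii), (iv) asserts that a commutative square of simplicial sets is a homotopy pullback, and that since $X$ and $Y$ are Reedy fibrant every vertical map involved is a fibration (being $\Map{-}{X}$ or $\Map{-}{Y}$ applied to a normal monomorphism), so each comparison map to the strict pullback can be analyzed fiberwise. For (i)~$\Leftrightarrow$~(ii), the maps $X_{C_k}\to X_{\partial C_k}\cong X_\eta^{\,k+1}$ and $Y_{C_k}\to Y_{\partial C_k}$ are fibrations and the comparison map of (ii) is a map of fibrations over the base $X_{\partial C_k}$. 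By the standard fiber lemma it is a weak equivalence if and only if it induces weak equivalences on all fibers, and since $X_\eta$ is a Kan complex the base is a product of Kan complexes, so it suffices to test over vertices. By the very definition of the multi-mapping spaces, the induced map on the fiber over $((x_1,\dots,x_k),y)$ is exactly $\mapds{X}{x}{k}{y}\to\mapd{Y}{f(x_1),\ldots,f(x_k)}{f(y)}$, whence (ii) holds precisely when $f$ is homotopically fully faithful.

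For (ii)~$\Rightarrow$~(iii), recall that $(\cosk_\eta X_\eta)_T$ is the product $X_\eta^{E(T)}$ over the set $E(T)$ of edges of $T$, so the comparison map of (iii) is once more a map of fibrations over the base $X_\eta^{E(T)}$; here $X_T\to X_\eta^{E(T)}$ is a fibration because it is $\Map{-}{X}$ applied to the normal monomorphism $\coprod_{E(T)}\eta\hookrightarrow T$. Testing fiberwise over a vertex $\underline c=(c_e)_e$, I would use the Segal equivalence $X_T\xrightarrow{\simeq}X_{\spn{T}}$ together with the description of $\spn{T}$ as the corollas $C_v$ at the vertices $v$ of $T$ grafted along the inner edges: once all edge-colours are fixed to $\underline c$, the gluing conditions along inner edges become vacuous, so the fiber of $X_T\to X_\eta^{E(T)}$ over $\underline c$ is identified with the product $\prod_v\mapd{X}{\mathrm{in}(v)}{\mathrm{out}(v)}$ of multi-mapping spaces. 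The comparison then restricts on fibers to the product of the corolla comparisons, which is a weak equivalence by (i)~$\Leftrightarrow$~(ii). Specializing this argument to $T=C_k$ recovers (ii), so (iii)~$\Rightarrow$~(ii) comes for free.

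For (iii)~$\Rightarrow$~(iv), a direct computation using $(\partial T)_\eta=T_\eta$ for $T$ with a vertex gives $(\cosk_\eta X_\eta)_T\cong\Map{\partial T}{\cosk_\eta X_\eta}=(\cosk_\eta X_\eta)_{\partial T}$, so the comparison map of (iii) factors as
$$X_T \longrightarrow Y_T\times_{Y_{\partial T}}X_{\partial T}\longrightarrow Y_T\times_{(\cosk_\eta Y_\eta)_T}(\cosk_\eta X_\eta)_T,$$
that is, the boundary square of (iv) pastes with a square supported on $\partial T$ to yield the coskeletal square of (iii). By the pasting lemma for homotopy pullbacks it then suffices to show that this second square --- which is exactly the condition-(iii) square $\square_{\partial T}$ evaluated at $\partial T$ rather than at a tree --- is a homotopy pullback, after which (iv) follows. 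This is where I expect the main obstacle to lie: one must show that the class of normal dendroidal sets $A$ for which $\square_A$ is a homotopy pullback contains every representable tree (by (iii)) and is closed under the coproducts, pushouts along boundaries, and transfinite compositions of the skeletal filtration. The crux is that, by Reedy fibrancy of $X$ and $Y$, the functors $\Map{-}{X}$ and $\Map{-}{\cosk_\eta X_\eta}$ carry these homotopy colimits of normal monomorphisms to \emph{homotopy} limits (not merely strict ones), and homotopy pullback squares are stable under homotopy limits; the delicate bookkeeping is precisely verifying that the strict pullbacks produced by the skeletal filtration of $\partial T$ genuinely compute homotopy pullbacks. Granting this, $\square_{\partial T}$ is a homotopy pullback, completing (iii)~$\Rightarrow$~(iv), and (iv)~$\Rightarrow$~(ii) is the specialization $T=C_k$, closing the cycle.
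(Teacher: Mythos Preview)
Your approach is correct in outline and genuinely differs from the paper's. For (i)$\Leftrightarrow$(ii) you match the paper exactly. For (ii)$\Rightarrow$(iii), your direct fiberwise computation via the Segal condition---identifying the fiber of $X_T\to X_\eta^{E(T)}$ with a product of multi-mapping spaces---is a valid and pleasantly concrete alternative. The paper instead proves (ii)$\Rightarrow$(iii) and (ii)$\Rightarrow$(iv) \emph{simultaneously} by induction on the number of vertices of $T$, using the single chain
\[
X_T\longrightarrow X_{\partial T}\longrightarrow X_{\spn T}\longrightarrow \Map{\mathrm{sk}_\eta T_\eta}{X}
\]
(and its analogue for $Y$): the composite of the first two arrows is an equivalence by the Segal condition, so that rectangle is a homotopy pullback; the middle and right squares are homotopy pullbacks by the induction hypothesis, since $\mathrm{sk}_\eta T_\eta\to\spn T\to\partial T$ are built from boundary inclusions of strictly smaller trees; pasting and cancellation then give both (iii) and (iv) at once. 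This avoids both your explicit fiber identification and a separate (iii)$\Rightarrow$(iv) argument.

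There is one genuine soft spot in your (iii)$\Rightarrow$(iv). Your closure statement---that the class of normal $A$ with $\square_A$ a homotopy pullback contains all representable trees and is closed under pushouts along boundary inclusions---does not suffice as written. To conclude that $\square_{A\cup_{\partial S}S}$ is a homotopy pullback from stability of homotopy pullback squares under homotopy limits, you need $\square_{\partial S}$ to be a homotopy pullback too, not only $\square_A$ and $\square_S$; and $\partial S$ is not a tree, so (iii) does not give this directly. The fix is to make the induction on skeletal degree explicit: at degree~$n$ you have $\square_S$ for all trees $S$ with at most $n$ vertices by (iii), and $\square_{\partial S}$ by the induction hypothesis since $\partial S$ is built from trees with strictly fewer vertices. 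This is exactly the induction the paper runs, just packaged differently; once made explicit, your argument goes through.
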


\begin{proof}
    The equivalence between {\rm (i)} and ${\rm (ii)}$ follows from the commutative diagram
    \begin{equation}\label{dg:functoriality}
        \begin{tikzcd}
            X_{C_k} \arrow[d, two heads] \arrow[r] & Y_{C_k} \arrow[d, two heads] \\
            X_{\partial C_k} \arrow[r] & Y_{\partial C_k}
        \end{tikzcd}
    \end{equation}
    Since vertical arrows are fibrations, the square is a homotopy pullback if and only if the induced maps on all fibers are weak equivalences, and also if and only if the comparison map from $X_{C_k}$ to the strict pullback is a weak equivalence.

    Conditions (iii) and (iv) both imply (ii), since the case $T=C_k$ recovers precisely condition (ii). For the converse implications, we proceed by induction on the number of vertices of $T$. For $T=C_k$, the statement is exactly (ii) and (iii) is automatically true for $T=\eta$. Thus we may assume $T$ has at least two vertices. Consider the diagram
    \begin{equation}\label{dg:fullyfaithful}
        \begin{tikzcd}
            X_T \arrow[d] \arrow[r, two heads] & \Map{\partial T}{X} \arrow[d] \arrow[r, two heads] & \Map{\spn{T}}{X} \arrow[d] \arrow[r, two heads] & \Map{\text{sk}_\eta T_\eta}{X} \arrow[d] \\
            Y_T \arrow[r, two heads] & \Map{\partial T}{Y} \arrow[r, two heads] & \Map{\spn{T}}{Y} \arrow[r, two heads] & \Map{\text{sk}_\eta T_\eta}{Y}
        \end{tikzcd}
    \end{equation}
    All horizontal maps are Kan fibrations between Kan complexes because $X$ and $Y$ are Reedy fibrant. Condition (iii) (resp.\ (iv)) is equivalent to the total rectangle (resp.\ left square) being a homotopy pullback. The composites of the left and middle horizontal arrows are weak equivalences by the Segal condition; hence the composite of the left and middle squares is a homotopy pullback. Moreover, the maps
    $$        \text{sk}_\eta T_\eta\to \spn{T}\to \partial T
    $$
    are formed by a sequence of pushouts of maps of the form $\partial S\to S$ for trees $S$ with strictly fewer vertices than $T$ (via the skeletal filtration of normal dendroidal sets). By the induction hypothesis, the middle and right squares are homotopy pullbacks. By the pasting/cancellation laws for homotopy pullbacks, the claim follows.
\end{proof}

\begin{rem}\label{rm:alt_fullyfaithful}
    For any map $f:X\to Y$ of dendroidal Segal spaces, the object
    $$
        Y\times_{\text{cosk}_\eta Y_\eta} \text{cosk}_\eta X_\eta
    $$
    is again a dendroidal Segal space. This follows by inspecting the matching maps and using that $\spn{T}_\eta=T_\eta$. Proposition~\ref{pr:char_fullyfaithful} then shows that $f$ is homotopically fully faithful if and only if
    \begin{equation*}
        X\to Y\times_{\text{cosk}_\eta Y_\eta} \text{cosk}_\eta X_\eta
    \end{equation*}
    is a Reedy weak equivalence. But this is the case if and only if $f$ is a weak equivalence in $\dspsegal$.
\end{rem}

\begin{rem}\label{rm:fullyfaithful_reedy_fib_case}
    The argument of Proposition~\ref{pr:char_fullyfaithful} adapts to show that for any Reedy fibration $f:X\to Y$ between arbitrary dendroidal spaces (not necessarily Segal), conditions (iii) and (iv) are equivalent. Indeed, in diagram~\eqref{dg:fullyfaithful} but ignoring the spines, the vertical maps are fibrations. Since $\sset$ is right proper, conditions (iii) and (iv) correspond to the total rectangle and the left square being homotopy pullbacks, respectively. If condition (iii) holds, then by induction the right square is a homotopy pullback, so that the left square is a pullback. Assuming condition (iv) the total rectangle is a pullback again by the same inductive argument based on the skeletal filtration of $\text{sk}_\eta T_\eta\to T$. 
\end{rem}

\begin{ex}
    Any weak equivalence $f:X\to Y$ in $\dspsegal$ between dendroidal Segal spaces is a Dwyer--Kan equivalence. Indeed, $f$ is a Reedy weak equivalence, hence $f_\eta$ and $f_{C_k}$ are weak equivalences in $\sset$, making \eqref{dg:functoriality} a homotopy pullback for every corolla and ensuring that $\pi_0 f_\eta$ is surjective. The former gives homotopical fully faithfulness, and the latter essential surjectivity.
\end{ex}

\subsection{Isofibrations}

We now introduce the class of morphisms that will serve as the fibrations between fibrant objects in the model structure developed later.

\begin{defn}
    A map $f:X\to Y$ of dendroidal Segal spaces is an \emph{isofibration} if it is a Reedy fibration and the induced map
    $$
        j^*\ho X\to j^*\ho Y
    $$
    is an isofibration of categories.
\end{defn}

\begin{rem}\label{rm:isofib_i^*}
    By Corollary~\ref{co:comut_j*_ho} we have a natural isomorphism
    $$
        j^*\ho X\cong \ho(i^*X)
    $$
    for every dendroidal Segal space $X$. Thus a Reedy fibration $f:X\to Y$ between dendroidal Segal spaces is an isofibration if and only if the induced map of simplicial Segal spaces    $i^*f:i^*X\to i^*Y$ is an isofibration in the sense of \cite[Definition~2.13]{moser2024}.
\end{rem}

This observation allows one to transport many basic properties of isofibrations of simplicial Segal spaces directly to the dendroidal setting. The next two results illustrate this principle.

\begin{prop}\label{pr:char_isofib}
    A Reedy fibration $f:X\to Y$ between dendroidal Segal spaces is an isofibration if and only if it has the right lifting property with respect to either inclusion
    \begin{equation*}
        q:\Omega[\eta]\cong i_!N[0]\to i_!NI[1].
    \end{equation*}
\end{prop}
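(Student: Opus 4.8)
My plan is to prove the equivalence by transporting it to the simplicial setting, where the corresponding statement is available from \cite{moser2024}, using Remark~\ref{rm:isofib_i^*} and the adjunction $i_!\dashv i^*$. First I would record that $q$ is by construction the image under $i_!$ of the map $\iota\colon N[0]\to NI[1]$ of simplicial spaces obtained by applying the discrete nerve to one of the two object inclusions $[0]\to I[1]$, together with the identification $i_!N[0]\cong\Omega[\eta]$. Since $i_!\dashv i^*$ is an ordinary adjunction, the bijection $\dsp(i_!(-),-)\cong\ssp(-,i^*(-))$ turns any lifting square for $f$ against $q=i_!\iota$ into a lifting square for $i^*f$ against $\iota$, and conversely. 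Hence $f$ has the right lifting property with respect to $q$ if and only if $i^*f$ has the right lifting property with respect to $\iota\colon N[0]\to NI[1]$.

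With this reduction in place, the proof assembles into a short chain of equivalences. By Remark~\ref{rm:isofib_i^*}, $f$ is an isofibration of dendroidal Segal spaces if and only if $i^*f$ is an isofibration of simplicial Segal spaces in the sense of \cite{moser2024}. The simplicial analogue of the present proposition, which I would cite from \cite{moser2024}, asserts that a Reedy fibration between simplicial Segal spaces is such an isofibration precisely when it lifts against $N[0]\to NI[1]$. Combining these two facts with the adjunction step above gives the claimed equivalence. It remains to account for the phrase ``either inclusion'': the two object inclusions $[0]\rightrightarrows I[1]$ are interchanged by the nontrivial automorphism of $I[1]$ swapping its objects, so the two resulting maps $q$ are isomorphic and the associated lifting problems coincide; thus the right lifting property against one holds if and only if it holds against the other.

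The step I expect to demand the most care is pinning down the simplicial input from \cite{moser2024} in exactly the form used here, namely as the right lifting property against the \emph{discrete nerve} $N[0]\to NI[1]$ of simplicial spaces rather than against $\{0\}\to I[1]$ in $\cat$. Translating between ``$\ho(i^*X)\to\ho(i^*Y)$ is an isofibration of categories'' and this lifting property relies on the Segal condition, which ensures that a lift of an isomorphism in the homotopy category can be realized at the level of the simplicial Segal space. Everything else in the argument is formal, so once this simplicial characterization is isolated, the dendroidal statement follows immediately by adjunction.
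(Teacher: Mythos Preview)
Your proof is correct and follows essentially the same approach as the paper's: reduce to the simplicial case via Remark~\ref{rm:isofib_i^*} and the adjunction $i_!\dashv i^*$, then invoke \cite[Proposition~2.15]{moser2024}. The paper's proof is a terse two-sentence version of exactly this argument; your version simply makes the adjunction step and the ``either inclusion'' point explicit.
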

\begin{proof}
    By Remark~\ref{rm:isofib_i^*}, the statement reduces to the corresponding one for simplicial Segal spaces. The latter asserts that a morphism is an isofibration precisely when it lifts against $N[0]\to NI[1]$; see \cite[Proposition~2.15]{moser2024}.
\end{proof}

\begin{prop}\label{pr:exp_isofib}
    Let $p:X\to Y$ be an isofibration between dendroidal Segal spaces. Then the induced maps
    $$
        \expd{X}{i_!NI[1]}\to \expd{Y}{i_!NI[1]}\times_{Y^2} X^2
    $$
    and
    \begin{equation*}
        \expd{X}{i_!NI[1]}\to \expd{Y}{i_!NI[1]}\times_{Y} X
    \end{equation*}
    are isofibrations.
\end{prop}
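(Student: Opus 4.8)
The plan is to exhibit both maps as Leibniz cotensors (pullback--powers) of $p$ with respect to the Boardman--Vogt tensor $\otimes$, and then to transfer the analogous statement for simplicial Segal spaces from \cite{moser2024} along the functor $i^*$. Since $\expd{X}{-}$ converts colimits in the exponent into limits and $\eta$ is the unit of $\otimes$, we have $\expd{X}{\om{\eta}}\cong X$, whence $\expd{X}{\om{\eta}\sqcup\om{\eta}}\cong X^2$. Consequently the two displayed morphisms are precisely the pullback--powers $\expd{p}{j_1}$ and $\expd{p}{j_2}$ of $p$ against the normal monomorphisms $j_1 = q\colon\om{\eta}\to i_!NI[1]$ and $j_2\colon\om{\eta}\sqcup\om{\eta}\to i_!NI[1]$ (the inclusion of the two endpoints of $I[1]$), both cotensors being formed with $\otimes$.

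Next I would reduce to the simplicial case. As a right adjoint, $i^*$ preserves limits, and we have $i^*\expd{X}{i_!M}\cong(i^*X)^M$; thus $i^*$ carries $\expd{p}{j_1}$ and $\expd{p}{j_2}$ to the corresponding pullback--powers of $i^*p$ against $N[0]\to NI[1]$ and $N\partial I[1]\to NI[1]$ in $\ssp$. By Remark~\ref{rm:isofib_i^*} the map $i^*p$ is an isofibration of simplicial Segal spaces, so these simplicial pullback--powers are isofibrations of simplicial Segal spaces by the corresponding result of Moser--Nuiten \cite{moser2024}.

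To transfer the conclusion back, by Remark~\ref{rm:isofib_i^*} it suffices to check that $\expd{p}{j_1}$ and $\expd{p}{j_2}$ are Reedy fibrations between dendroidal Segal spaces. For the Segal property, $\expd{X}{i_!NI[1]}$ is Segal because, for each inner horn $\horn{e}{T}\to T$, the map $\horn{e}{T}\otimes i_!NI[1]\to T\otimes i_!NI[1]$ is the $\otimes$--pushout--product of an inner anodyne with the normal object $i_!NI[1]$, hence again inner anodyne \cite{Heuts2022}; a Segal space $X$ lifts against it, and by the adjunction \eqref{eq:adj_otimes} this is exactly the Segal condition for $\expd{X}{i_!NI[1]}$. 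The codomains are pullbacks of Segal spaces (such as $\expd{Y}{i_!NI[1]}$, $X$, $Y$ and their squares) along Reedy fibrations, hence Segal by the same matching--map argument as in Remark~\ref{rm:alt_fullyfaithful}.

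The main obstacle is the Reedy--fibration property, where the dendroidal theory genuinely departs from the simplicial one. By the two--variable adjunction, $\expd{p}{j}$ is a Reedy fibration if and only if $a\,\widehat{\otimes}\,j$ is a trivial Reedy cofibration for every generating trivial Reedy cofibration $a$ of \eqref{eq:gen_triv_cof_Reedy}, that is, for $a$ the $\cdot$--Leibniz product of $\partial T\to T$ with $\horn{k}{n}\to\std{n}$. Using that $\otimes$ preserves the simplicial tensor $\cdot$ in each variable, one rewrites $a\,\widehat{\otimes}\,j$ as the $\cdot$--Leibniz product of $(\partial T\to T)\,\widehat{\otimes}\,j$ with $\horn{k}{n}\to\std{n}$; here $(\partial T\to T)\,\widehat{\otimes}\,j$ is a normal monomorphism of dendroidal sets by the pushout--product lemma for normal monomorphisms \cite{Heuts2022}, and $\horn{k}{n}\to\std{n}$ is a trivial Kan cofibration, so the genuine $\sset$--enrichment of the Reedy structure makes the result a trivial Reedy cofibration. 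Because $\otimes$ is only colax monoidal and $\dsp$ is merely weakly enriched over $\ssp$, the real work lies in organising these mixed Leibniz products of $\otimes$ and $\cdot$ so that the Cisinski--Moerdijk pushout--product lemmas apply; once this and the Segal property are secured, the comparison with \cite{moser2024} along $i^*$ is formal.
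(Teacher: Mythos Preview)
Your approach matches the paper's: both reduce to the simplicial case via $i^*$, the compatibility $i^*\expd{Y}{i_!M}\cong(i^*Y)^M$, and the fact that $i^*$ preserves limits, then invoke \cite[Lemma~2.18]{moser2024} and transfer back through Remark~\ref{rm:isofib_i^*}; the paper records these ingredients in one sentence, whereas you additionally spell out the Reedy--fibration and Segal verifications needed to apply that remark (these are essentially the content of Propositions~\ref{pr:Reedy_tensor} and~\ref{pr:Segal_tensor}, proved shortly after). One correction: your final concern about colax monoidality is misplaced here, since the interchange $(A\cdot S)\otimes B\cong(A\otimes B)\cdot S$ is a genuine isomorphism (because $\otimes$ preserves colimits, hence the simplicial tensor, in each variable), so the rewriting $(f_1\boxdot f_2)\boxtimes i_!j\cong(f_1\boxtimes i_!j)\boxdot f_2$ holds on the nose---the associativity defect of $\otimes$ only concerns triple tensors of dendroidal objects and plays no role in this argument.
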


\begin{proof}
    This follows immediately from the analogous result for simplicial Segal spaces~\cite[Lemma~2.18]{moser2024}, together with Remark~\ref{rm:isofib_i^*}, the fact that $i^*$ preserves limits and Segal spaces, and the natural isomorphism $i^*(\expd{Y}{i_!X})\cong (i^*Y)^{X}$
    for $X$ a simplicial space and $Y$ a dendroidal space.
\end{proof}

\subsection{Compatibility with bifunctors}
In this section we establish several compatibility properties of $\dsp_{\text{Segal}}$ with both the simplicial tensor $\cdot$ and the tensor product $\otimes$ of dendroidal spaces. To streamline notation, we denote the pushout product of two maps $f\colon A\to B$ and $g\colon C\to D$ with respect to the simplicial cotensor $\cdot$ by
$$
    f\boxdot g\colon \pop{\cdot}{A}{B}{C}{D},
$$
the pushout product associated to the tensor product $\otimes$ by
$$
    f\boxtimes g\colon \pop{\otimes}{A}{B}{C}{D},
$$
and the usual cartesian pushout product by
$$
    f\boxc g\colon \pop{\times}{A}{B}{C}{D},
$$
which will only be used for maps of simplicial sets. We begin with a straightforward observation.

\begin{lem}\label{lm:prod}
    Let $f$ be a normal monomorphism of dendroidal \emph{sets} (i.e.\ discrete dendroidal spaces) and let $g$ be a cofibration of simplicial sets. Then $f\boxdot g$ is a Reedy cofibration (that is, a normal monomorphism), and it is trivial whenever $g$ is.
\end{lem}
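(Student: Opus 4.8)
The plan is to reduce everything to the generating (co)fibrations via the standard closure properties of the pushout--product (Leibniz) construction, so that no genuine homotopical computation is needed. Recall that the simplicial tensor $\cdot$ preserves colimits separately in each variable, since $X\cdot S=X\times i_v(S)$ and $\dsp$ is cartesian closed, and that the horizontal inclusion $i_h\colon\dset\to\dsp$ also preserves colimits, being the levelwise discrete embedding in the vertical direction (the discrete functor $\set\to\sset$ is simultaneously a left and a right adjoint). Consequently the bifunctor $f\boxdot g$ is cocontinuous in each variable, and the usual formal argument applies: for any weakly saturated class $\mathcal{M}$ of maps in $\dsp$, the subclass of those $f$ with $f\boxdot g\in\mathcal{M}$ (for a fixed $g$) is again weakly saturated, and symmetrically in $g$. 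Since the normal monomorphisms of dendroidal sets are the weak saturation of the boundary inclusions $\partial T\to T$, while the cofibrations (resp.\ trivial cofibrations) of $\sset$ are the weak saturation of the inclusions $\partial\std{n}\to\std{n}$ (resp.\ of the horn inclusions $\horn{k}{n}\to\std{n}$), it will suffice to verify both assertions only when $f$ is a boundary inclusion $\partial T\to T$ and $g$ is one of these generators.

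Carrying this out, I would first take $f=(\partial T\to T)$ and $g=(\partial\std{n}\to\std{n})$, for which $f\boxdot g$ is literally the generating Reedy cofibration \eqref{eq:gen_cof_Reedy}, hence a normal monomorphism. Taking $\mathcal{M}$ to be the class of normal monomorphisms of $\dsp$ and running the two-step saturation argument---first fixing $f=(\partial T\to T)$ and letting $g$ range over all monomorphisms of simplicial sets, then fixing an arbitrary cofibration $g$ and letting $f$ range over all normal monomorphisms of dendroidal sets---then yields that $f\boxdot g$ is a normal monomorphism for every normal monomorphism $f$ and every cofibration $g$. For the second assertion I would repeat the argument verbatim, now with $\mathcal{M}$ the class of trivial Reedy cofibrations and with $g$ a horn inclusion: in that base case $f\boxdot g$ is exactly the generating trivial Reedy cofibration \eqref{eq:gen_triv_cof_Reedy}, so the same reduction shows that $f\boxdot g$ is a trivial Reedy cofibration whenever $f$ is a normal monomorphism and $g$ is a trivial cofibration.

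The main obstacle is essentially bookkeeping rather than substance: I must identify the two families of pushout products on generators with the displayed generating sets \eqref{eq:gen_cof_Reedy} and \eqref{eq:gen_triv_cof_Reedy}, and confirm that the classes involved are genuinely weakly saturated. The former identification is immediate from the definitions of $\boxdot$ and of those generating families. The latter rests on the cocontinuity of $\cdot$ in each variable together with the fact that $i_h$ preserves colimits, which is precisely what guarantees that fixing a boundary inclusion $\partial T\to T$ (regarded as a discrete dendroidal space) and applying $-\boxdot g$ transports pushouts, transfinite composites, coproducts and retracts of dendroidal sets to the corresponding constructions in $\dsp$. Granting these two points, the closure properties of $\mathcal{M}$ do all the remaining work, and both halves of the statement follow at once.
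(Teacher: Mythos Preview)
Your proposal is correct and follows essentially the same approach as the paper: reduce to generators via the closure properties of the pushout--product, then observe that on generators $f\boxdot g$ is literally one of the displayed generating Reedy (trivial) cofibrations \eqref{eq:gen_cof_Reedy} or \eqref{eq:gen_triv_cof_Reedy}. The paper's proof is simply a terse two-sentence version of what you have written out in full.
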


\begin{proof}
    It suffices to treat the case where $f$ is a boundary inclusion of a tree and $g$ is a boundary or horn inclusion of a simplex. In this case the statement follows immediately from the description of the generating Reedy (trivial) cofibrations~\eqref{eq:gen_cof_Reedy} and~\eqref{eq:gen_triv_cof_Reedy}.
\end{proof}

We now prove that both the Reedy and Segal model structures on $\dsp$ are simplicial model structures.

\begin{prop}\label{pr:Reedy_simplicial}
    The simplicial tensor
    $$
        (-)\cdot(-)\colon \dsp\times\sset\to\dsp
    $$
    is a left Quillen bifunctor with respect to the Reedy model structure on $\dsp$ and the Kan--Quillen model structure on $\sset$.
\end{prop}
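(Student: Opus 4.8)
The plan is to recall that a left Quillen bifunctor is precisely a two-variable adjunction satisfying the pushout--product axiom, and then to verify the latter. The two-variable adjunction is already available: the simplicial tensoring and cotensoring recalled in the preliminaries give the isomorphisms $\dsp(X\cdot S, Y)\cong \sset(S, \Map{X}{Y})\cong \dsp(X, Y^{i_v(S)})$. It therefore remains to show that whenever $f\colon A\to B$ is a normal monomorphism of dendroidal spaces and $g\colon C\to D$ is a monomorphism of simplicial sets, the map $f\boxdot g$ is a Reedy cofibration, and is moreover trivial as soon as $f$ or $g$ is.

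First I would reduce to generators. For a fixed $g$, the class of maps $h$ for which $h\boxdot g$ is a (trivial) Reedy cofibration is saturated, because $(-)\boxdot g$ preserves colimits in its first variable while the (trivial) Reedy cofibrations are closed under pushouts, transfinite compositions and retracts; the same holds with the roles of the two variables exchanged. By the standard calculus of pushout--products it therefore suffices to verify the axiom when $f$ and $g$ range over generating (trivial) cofibrations. For $\sset$ with the Kan--Quillen structure these are the boundary inclusions $\partial\std{m}\to\std{m}$ and the horn inclusions $\horn{k}{m}\to\std{m}$, while for $\dsp_R$ they are, by \eqref{eq:gen_cof_Reedy} and \eqref{eq:gen_triv_cof_Reedy}, the maps $(\partial T\to T)\boxdot(\partial\std{n}\to\std{n})$ and $(\partial T\to T)\boxdot(\horn{k}{n}\to\std{n})$, which are themselves pushout--products.

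The crucial observation is that the pushout--product is associative. Since $i_v$ preserves products, one has $(X\cdot S)\cdot S'\cong X\cdot(S\times S')$, and this yields, for a map $a$ of dendroidal spaces and maps $b,c$ of simplicial sets, a natural isomorphism $(a\boxdot b)\boxdot c\cong a\boxdot(b\boxc c)$. Applying this to each of the three generator combinations rewrites it as $(\partial T\to T)\boxdot\sigma$, where $\sigma$ is a pushout--product of simplicial sets: it equals $(\partial\std{n}\to\std{n})\boxc(\partial\std{m}\to\std{m})$ in the cofibration case, and has one of its two factors replaced by a horn inclusion in each of the two trivial cases. As the Kan--Quillen model structure is a cartesian monoidal model structure, $\sigma$ is a monomorphism of simplicial sets, and it is a trivial cofibration whenever one of its factors is a horn inclusion. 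Lemma~\ref{lm:prod}, applied to the normal monomorphism $\partial T\to T$ of dendroidal sets and the simplicial (trivial) cofibration $\sigma$, then shows that $(\partial T\to T)\boxdot\sigma$ is a Reedy cofibration, and is trivial whenever $\sigma$ is. This disposes of all three generator checks, and the saturation argument of the previous step propagates the conclusion to arbitrary $f$ and $g$.

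The only genuinely nontrivial ingredient, beyond this bookkeeping, is the associativity isomorphism $(a\boxdot b)\boxdot c\cong a\boxdot(b\boxc c)$: it is precisely what allows one to peel the simplicial factor off a Reedy generator and thereby reduce the whole statement to Lemma~\ref{lm:prod} together with the monoidality of the Kan--Quillen structure. I would therefore take care to justify $(X\cdot S)\cdot S'\cong X\cdot(S\times S')$ from the product-preservation of $i_v$ before invoking it.
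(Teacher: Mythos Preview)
Your proof is correct and follows essentially the same approach as the paper: reduce to generating Reedy cofibrations, use the associativity $(f_1\boxdot f_2)\boxdot g = f_1\boxdot(f_2\boxc g)$ to peel off the simplicial factor, and then invoke Lemma~\ref{lm:prod} together with the cartesian monoidality of the Kan--Quillen structure. The only cosmetic difference is that the paper keeps $g$ an arbitrary monomorphism rather than reducing it to generators, but this changes nothing substantive.
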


\begin{proof}
    Let $f=f_1\boxdot f_2$ be a generating Reedy cofibration of dendroidal sets, where $f_1$ and $f_2$ are boundary inclusions $\partial T\to T$ and $\partial\std{n}\to\std{n}$, and let $g\colon A\to B$ be any monomorphism of simplicial sets. Then
    \begin{equation}\label{eq:ass1}
        f\boxdot g=(f_1\boxdot f_2)\boxdot g=f_1\boxdot (f_2\boxc g).
    \end{equation}
    By Lemma~\ref{lm:prod}, $f\boxdot g$ is a Reedy cofibration.

    If $g$ is a trivial cofibration, then $f_2\boxc g$ is a trivial cofibration of simplicial sets, hence the map~$f\boxdot g$ is a Reedy trivial cofibration by Lemma~\ref{lm:prod}. If instead $f_2$ is a horn inclusion (making $f$ a generating Reedy trivial cofibration), then $f_2\boxc g$ is again a trivial cofibration in $\sset$, so $f\boxdot g$ is a Reedy trivial cofibration.
\end{proof}

\begin{prop}\label{pr:Segal_simplicial}
    The simplicial tensor
    \begin{equation*}
        (-)\cdot(-)\colon \dsp\times\sset\to\dsp
    \end{equation*}
    is a left Quillen bifunctor with respect to the Segal model structure on $\dsp$ and the Kan--Quillen model structure on $\sset$.
\end{prop}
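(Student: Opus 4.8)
The plan is to leverage Proposition~\ref{pr:Reedy_simplicial} together with the fact that $\dspsegal$ is the left Bousfield localization of $\dsp_R$ at the spine inclusions, so that the two model structures share the same cofibrations, namely the normal monomorphisms. Consequently the pushout product $f\boxdot g$ of a normal monomorphism $f$ with a monomorphism $g$ of simplicial sets is again a normal monomorphism, hence a Segal cofibration, directly by the Reedy case. If moreover $g$ is a trivial cofibration of simplicial sets, then $f\boxdot g$ is a Reedy trivial cofibration by Proposition~\ref{pr:Reedy_simplicial}, and since every Reedy weak equivalence is a Segal weak equivalence, it is a Segal trivial cofibration as well. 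The only remaining case, and the genuine content of the statement, is to show that if $f\colon A\to B$ is a \emph{trivial} cofibration in $\dspsegal$ and $g\colon C\to D$ is an arbitrary monomorphism of simplicial sets, then $f\boxdot g$ is a Segal weak equivalence (it is already a cofibration by the above).

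To handle this case I would argue through the mapping-space characterization of Segal equivalences. The essential point is that, for any Reedy fibrant $X$, the contravariant functor $\Map{-}{X}$ carries the simplicial tensor to the simplicial cotensor: using the two-variable adjunction of the enrichment and the associativity $(A\cdot S)\cdot\Delta[n]\cong A\cdot(S\times\Delta[n])$ one obtains a natural isomorphism $\Map{A\cdot S}{X}\cong\Map{A}{X}^{S}$, the exponential being the internal hom of simplicial sets. Since $\Map{-}{X}$ sends colimits to limits, applying it to the pushout product $f\boxdot g\colon\pop{\cdot}{A}{B}{C}{D}$ produces exactly the Leibniz cotensor of $g$ against $\Map{f}{X}$, namely the map
$$\Map{B}{X}^{D}\longrightarrow\Map{A}{X}^{D}\times_{\Map{A}{X}^{C}}\Map{B}{X}^{C}.$$

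Now fix an arbitrary dendroidal Segal space $X$. Because $f$ is a weak equivalence in $\dspsegal$, Remark~\ref{rm:inner_an_triv_cof_segal} shows that $\Map{f}{X}\colon\Map{B}{X}\to\Map{A}{X}$ is a trivial fibration of simplicial sets. As $g$ is a monomorphism and the Kan--Quillen model structure is a cartesian monoidal model category, the Leibniz cotensor of a cofibration with a trivial fibration is itself a trivial fibration; hence $\Map{f\boxdot g}{X}$ is a trivial fibration. Since $X$ was an arbitrary Segal space, the converse implication of Remark~\ref{rm:inner_an_triv_cof_segal} yields that $f\boxdot g$ is a weak equivalence in $\dspsegal$, and being a cofibration it is therefore a Segal trivial cofibration. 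This completes the verification of the pushout product axiom.

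I do not anticipate a serious obstacle: once Remark~\ref{rm:inner_an_triv_cof_segal} is available the argument is entirely formal, and in fact the result could alternatively be derived from the general principle that a left Bousfield localization of a simplicial model category remains simplicial. The one place that warrants care is the identification of $\Map{-}{X}$ applied to $f\boxdot g$ with the Leibniz cotensor above, which relies on $\Map{-}{X}$ both converting the tensor $\cdot$ into the simplicial cotensor and transforming the defining pushout into the corresponding pullback; everything else reduces to the standard compatibility of the Kan--Quillen structure with the cartesian product.
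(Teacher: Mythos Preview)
Your proposal is correct and follows essentially the same approach as the paper: both reduce to the Reedy case for the cofibration and $g$-trivial parts, and for the remaining case both show that $\Map{f\boxdot g}{X}$ is a trivial fibration for every Segal $X$ using that $\Map{f}{X}$ is one. The only cosmetic difference is that the paper verifies this by directly checking the lifting of a boundary inclusion $h$ against $\Map{f\boxdot g}{X}$ via the adjunction (equivalently, $g\boxc h$ against $\Map{f}{X}$), whereas you package the same adjunction as the identification of $\Map{f\boxdot g}{X}$ with the Leibniz cotensor of $g$ against $\Map{f}{X}$ and invoke cartesian closedness of the Kan--Quillen structure.
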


\begin{proof}
    By Proposition~\ref{pr:Reedy_simplicial}, it remains to show the following: if $f\colon A\to B$ is a Reedy cofibration which is a weak equivalence in $\dsp_{\text{Segal}}$, and $g\colon C\to D$ is a cofibration of simplicial sets, then $\varphi=\Map{f\boxdot g}{X}$
    is a weak equivalence of simplicial sets whenever $X$ is Segal. Since $X$ is Reedy fibrant and $f\boxdot g$ is a Reedy cofibration, $\varphi$ is a Kan fibration; thus it suffices to show that it is trivial.

    A simplex boundary inclusion $h$ lifts on the left against $\varphi$ if and only if $g\boxc h$ lifts on the left against
    \begin{equation*}
        \Map{f}{X}\colon \Map{B}{X}\to\Map{A}{X}.
    \end{equation*}
    This lifting holds because $f$ is a weak equivalence in $\dsp_{\text{Segal}}$ and $X$ is Segal. Hence $\varphi$ is a trivial fibration, completing the proof.
\end{proof}

We now consider the tensor product in the case where one of the factors is simplicial.

\begin{prop}\label{pr:Reedy_tensor}
    The restricted tensor product
    $$
        (-)\otimes i_!(-)\colon \dsp\times\ssp\to\dsp
    $$
    is a left Quillen bifunctor with respect to the Reedy model structures.
\end{prop}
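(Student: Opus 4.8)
The plan is to verify the pushout–product axiom on generating cofibrations and then extend to all cofibrations by saturation. Because $\otimes$ preserves colimits in each variable and $i_!$ is a left adjoint, the bifunctor $(-)\otimes i_!(-)$ preserves colimits separately in each variable, so it suffices to show that $f\boxtimes i_!(g)$ is a Reedy cofibration whenever $f$ and $g$ are generating Reedy cofibrations of $\dsp$ and $\ssp$, and a trivial Reedy cofibration whenever in addition one of them is a generating trivial cofibration. By \eqref{eq:gen_cof_Reedy} and \eqref{eq:gen_triv_cof_Reedy} (and their simplicial analogues) we may write $f=a\boxdot p$, with $a\colon\partial T\to T$ a tree boundary inclusion and $p$ a simplex boundary or horn inclusion, and similarly $g=i_h(\beta)\boxdot q$, with $\beta\colon\partial\std{k}\to\std{k}$ a boundary inclusion of simplicial sets (horizontal direction) and $q$ a simplex boundary or horn inclusion.

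First I would simplify $i_!(g)$. Since $i_!$ preserves the simplicial tensor and all colimits, and since $i_!i_h\cong i_hi_!$ (both functors act levelwise in the vertical direction), we obtain $i_!(g)\cong b\boxdot q$, where $b\colon i_!(\partial\std{k})\to i_!(\std{k})$ is regarded as a (discrete) map of dendroidal sets. I would then invoke the compatibility isomorphisms $X\otimes(Y\cdot S)\cong (X\otimes Y)\cdot S$ and $(X\cdot S)\cdot S'\cong X\cdot(S\times S')$: together they show that the functor $(X,S,Y,S')\mapsto (X\otimes Y)\cdot(S\times S')$ preserves colimits separately in each of its four variables, so that the iterated pushout product of $a,p,b,q$ does not depend on the order in which it is formed. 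Regrouping then yields
\begin{equation*}
    f\boxtimes i_!(g)\cong(a\boxdot p)\boxtimes(b\boxdot q)\cong(a\boxtimes b)\boxdot(p\boxc q),
\end{equation*}
where $a\boxtimes b$ is a morphism of dendroidal sets and $p\boxc q$ is a morphism of simplicial sets.

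The key input is that $a\boxtimes b$ is a normal monomorphism of dendroidal sets. This is the assertion that the restricted tensor $(-)\otimes i_!(-)\colon\dset\times\sset\to\dset$ sends a pair consisting of a normal monomorphism and a monomorphism to a normal monomorphism; because the second factor $b$ lies in the image of $i_!$, and $i_!$ of a simplicial set is a union of linear trees, which carry no nontrivial automorphisms, this follows from the normality properties of the Boardman--Vogt tensor product established in~\cite{Cisinski_Moerdijk_1,Heuts2022}. Granting this, $a\boxtimes b$, viewed as a discrete map of dendroidal spaces, is a Reedy cofibration, while $p\boxc q$ is a cofibration of simplicial sets. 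Lemma~\ref{lm:prod} then shows that $(a\boxtimes b)\boxdot(p\boxc q)$ is a Reedy cofibration, which settles the cofibration case. For the trivial cases, observe that if either $f$ or $g$ is a generating trivial cofibration, then the corresponding factor $p$ or $q$ is a horn inclusion, so $p\boxc q$ is a trivial cofibration of simplicial sets; the ``trivial whenever $g$ is'' clause of Lemma~\ref{lm:prod} then upgrades the conclusion to a trivial Reedy cofibration.

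I expect the main obstacle to be the normality of $a\boxtimes b$, which is where the genuinely dendroidal combinatorics enters: it rests on understanding the shuffles of a tree with a linear tree and the freeness of the automorphism action on the resulting cells, and it is precisely here that the restriction to the image of $i_!$ is essential, since the unrestricted Boardman--Vogt tensor product is more delicate. By contrast, the reduction leading to the identity $f\boxtimes i_!(g)\cong(a\boxtimes b)\boxdot(p\boxc q)$ is a formal manipulation of pushout products; the only point needing care there is to confirm that all the functors involved preserve colimits separately in each variable, which is what legitimizes the regrouping of the iterated pushout product.
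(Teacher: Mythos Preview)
Your proof is correct and follows essentially the same route as the paper: decompose each generating Reedy cofibration as a $\boxdot$-product of a boundary inclusion of representables and a simplex boundary/horn inclusion, regroup to obtain $(a\boxtimes b)\boxdot(p\boxc q)$ (the paper's equation~\eqref{eq:calculo_boxtimes}), invoke the dendroidal-set result that $a\boxtimes i_!b$ is a normal monomorphism (the paper cites \cite[Proposition~4.21]{Heuts2022} specifically), and conclude via Lemma~\ref{lm:prod}. Your added discussion of why the regrouping is legitimate and why the restriction to $i_!$ is essential is accurate but goes slightly beyond what the paper spells out.
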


\begin{proof}
    Let $f=f_1\boxdot f_2$ and $g=g_1\boxdot g_2$ be generating cofibrations in the Reedy model structures on dendroidal and simplicial sets, respectively. Since both $\otimes$ and $i_!$ preserve the simplicial tensors appearing in these expressions, we obtain
    \begin{equation}\label{eq:calculo_boxtimes}
        f\boxtimes i_!g
        = (f_1\boxdot f_2)\boxtimes i_!(g_1\boxdot g_2)
        = (f_1\boxtimes i_!g_1)\boxdot (f_2\boxc g_2).
    \end{equation}
    By \cite[Proposition~4.21]{Heuts2022}, the map $f_1\boxtimes i_!g_1$ is a normal monomorphism of dendroidal \emph{sets}, while $f_2\boxc g_2$ is a cofibration of simplicial sets. Lemma~\ref{lm:prod} therefore implies that $f\boxtimes i_!g$ is a Reedy cofibration.

    If either $f_2$ or $g_2$ is a horn inclusion (so that $f$ or $g$ is a generating \emph{trivial} cofibration), then $f_2\boxc g_2$ is a trivial cofibration in $\sset$, and Lemma~\ref{lm:prod} again implies that $f\boxtimes i_!g$ is a trivial Reedy cofibration.
\end{proof}

\begin{prop}\label{pr:Segal_tensor}
    The restricted tensor product
    \begin{equation*}
        (-)\otimes i_!(-)\colon \dsp\times\ssp\to\dsp
    \end{equation*}
    is a left Quillen bifunctor with respect to the Segal model structures.
\end{prop}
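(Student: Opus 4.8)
The plan is to bootstrap from Proposition~\ref{pr:Reedy_tensor}, which already gives that $(-)\otimes i_!(-)$ is a left Quillen bifunctor for the Reedy structures, and to upgrade it to the Segal localizations by verifying the pushout-product axiom only on the maps generating the trivial cofibrations. Since $\dspsegal$ and $\ssp_{\mathrm{Segal}}$ are left Bousfield localizations of the Reedy structures at the (inner horn, hence) spine inclusions, and since $\dsp_R$ and $\ssp_R$ are left proper, combinatorial and simplicial (Proposition~\ref{pr:Reedy_simplicial}), the trivial cofibrations of $\dspsegal$ form the weakly saturated class generated by the generating Reedy trivial cofibrations~\eqref{eq:gen_triv_cof_Reedy} together with the localizing maps~\eqref{eq:pop_Segal}, and symmetrically for $\ssp_{\mathrm{Segal}}$ with the simplicial spine inclusions. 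I would use this description throughout.

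Concretely, the pushout-product axiom amounts to two statements: (I) if $f$ is a trivial cofibration in $\dspsegal$ and $g$ a cofibration in $\ssp$, then $f\boxtimes i_!g$ is a trivial cofibration; and (II) the same with $f$ a normal monomorphism and $g$ a trivial cofibration in $\ssp_{\mathrm{Segal}}$. For fixed $g$ the class of $f$ with $f\boxtimes i_!g$ a Segal trivial cofibration is weakly saturated, because $(-)\boxtimes i_!g$ preserves pushouts, transfinite composites and retracts and the Segal trivial cofibrations are weakly saturated; the same holds in the variable $g$. Hence in each case it suffices to treat the generators in both variables. The generators lying in the Reedy classes are disposed of immediately by Proposition~\ref{pr:Reedy_tensor}. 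For the localizing generators I repeatedly use the identities $(u\boxdot v)\boxtimes w=(u\boxtimes w)\boxdot v$ and $i_!(u\boxdot v)=i_!u\boxdot v$ --- valid since $\otimes$ and $i_!$ preserve the simplicial tensor in each variable --- to move the simplex-boundary factors $\partial\std{n}\to\std{n}$ outside the $\boxtimes$, and then Proposition~\ref{pr:Segal_simplicial} absorbs these factors. This reduces matters to two base cases: $(\horn{e}{T}\to T)\boxtimes i_!(\partial\std{a}\to\std{a})$ in case (I), and $(\partial T\to T)\boxtimes i_!(\spn{a}\to\std{a})$ in case (II).

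It remains to treat these two base cases. Each is a pushout product of a normal monomorphism with an inner anodyne map of dendroidal sets in which one factor lies in the image of $i_!$. In case (I) the factor $i_!(\partial\std{a}\to\std{a})$ is the boundary inclusion $\partial L\to L$ of a linear tree $L$, tensored with the inner anodyne map $\horn{e}{T}\to T$; in case (II) the factor $i_!(\spn{a}\to\std{a})$ is the spine inclusion $\spn{L}\to L$ of a linear tree --- itself inner anodyne --- tensored with an arbitrary boundary inclusion $\partial T\to T$. For such mixed pushout products the compatibility of the Boardman--Vogt tensor product with inner anodynes applies, showing that each base case is inner anodyne; by Remark~\ref{rm:inner_an_triv_cof_segal} it is therefore a trivial cofibration in $\dspsegal$, which completes the argument.

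The hard part is precisely this last step. The pushout product of a normal monomorphism with an inner anodyne map is \emph{not} inner anodyne for the dendroidal tensor in general, so no naive monoidality of $\otimes$ is available, and this is exactly where the genuinely dendroidal difficulties concentrate. The essential input is that $\otimes$ becomes well behaved once one factor is restricted along $i_!$ to the simplicial (linear) direction; this is the analogue, at the level of inner anodynes, of the statement that $f_1\boxtimes i_!g_1$ is a normal monomorphism used in Proposition~\ref{pr:Reedy_tensor} (see~\cite[Proposition~4.21]{Heuts2022}). I would isolate the two mixed pushout-product facts as a separate lemma and prove them either by an explicit grafting and skeletal analysis of dendroidal anodyne extensions, or by invoking the corresponding results of Cisinski--Moerdijk~\cite{Cisinski_Moerdijk_2} and Heuts~\cite{Heuts2022}.
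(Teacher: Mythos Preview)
Your computational core is exactly right and matches the paper: the identity
\[
(f_1\boxdot f_2)\boxtimes i_!(g_1\boxdot g_2)\;=\;(f_1\boxtimes i_!g_1)\boxdot(f_2\boxc g_2)
\]
reduces everything to the two base cases you isolate, and these are inner anodyne by the Heuts--Cisinski--Moerdijk compatibility of $\otimes$ with inner anodynes when one factor is linear (the paper cites \cite[Corollary~6.26]{Heuts2022}). So the hard dendroidal input is correctly identified.

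The gap is in your reduction step. The assertion that the trivial cofibrations of $\dspsegal$ are the weakly saturated class generated by~\eqref{eq:gen_triv_cof_Reedy} together with~\eqref{eq:pop_Segal} is not justified and is in general false for left Bousfield localizations: those two families only form a \emph{pseudo-generating} set, in the sense that they detect fibrant objects and fibrations between fibrant objects, but not arbitrary fibrations. (Compare the Joyal model structure on $\sset$, where inner horns detect fibrations between quasi-categories but do not generate all categorical trivial cofibrations, e.g.\ $\{0\}\hookrightarrow NI[1]$.) Consequently your saturation argument does not reach an arbitrary Segal trivial cofibration $f$, and the pushout-product axiom is not established.

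The paper sidesteps this by invoking \cite[Proposition~3.21]{Gutierrez2017}, which upgrades a Quillen bifunctor along Bousfield localizations provided the associated internal homs carry fibrant objects to fibrant objects; this is precisely a pseudo-generation statement for bifunctors. Concretely, the paper checks that $\expd{X}{i_!A}$ is Segal for every Segal $X$ and every cofibrant $A$, and this unwinds to exactly your base computation. Your argument can be repaired in the same spirit: rather than claiming a generating set for trivial cofibrations, argue that $f\boxtimes i_!g$ is a cofibration (Reedy case) and then show it is a Segal weak equivalence by testing $\Map{f\boxtimes i_!g}{X}$ against Segal $X$ only, as in the proof of Proposition~\ref{pr:Segal_simplicial}; alternatively, use Lemma~\ref{lm:pseudo_gen} (adapted to bifunctors) with your pseudo-generating set. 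Either route lands on the same base cases you already handle.
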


\begin{proof}
    Using Proposition~\ref{pr:Reedy_tensor} together with the Quillen adjunction between the Reedy and Segal model structures on $\dsp$, we obtain a left Quillen functor
    $$
        (-)\otimes i_!(-)\colon \dsp_{\mathrm{Reedy}}\times \ssp_{\mathrm{Reedy}}\to\dsp_{\mathrm{Segal}}.
    $$
    By \cite[Proposition~3.21]{Gutierrez2017}, it remains to verify the following two conditions:
    \begin{itemize}
        \item [{\rm (i)}] For every dendroidal Segal space $X$ and simplicial space $A$, the dendroidal space $\expd{X}{i_!A}$ is Segal.
        \item[{\rm (ii)}] For every dendroidal Segal space $X$ and normal dendroidal space $A$, the simplicial space $i^*(\expd{X}{A})$ is Segal.
    \end{itemize}
    We prove only condition (i); the second is similar. Since $\expd{X}{i_!A}$ is Reedy fibrant by Proposition~\ref{pr:Reedy_tensor}, it is a dendroidal Segal space if and only if
    $$
        \Map{T}{\expd{X}{i_!A}} \to \Map{\horn{e}{T}}{\expd{X}{i_!A}}
    $$
    is a trivial fibration for every tree $T$ and inner edge $e$. This is equivalent to the lifting property against all maps of the form $f=f_1\boxdot f_2$, where $f_1\colon \spn{T}\to T$ is an inner horn inclusion and $f_2\colon \partial\std{n}\to\std{n}$ is a boundary inclusion of simplices. By the adjunction~\eqref{eq:adj_otimes}, this is in turn equivalent to $X$ lifting against
    \begin{equation}\label{eq:popd}
        f\boxtimes i_!(\emptyset\to A).
    \end{equation}

    We now prove a stronger statement: for \emph{any} Reedy cofibration $g$ in $\ssp$, the map $f\boxtimes i_!g$ lifts against $X$. Let $g=g_1\boxdot g_2$ be a generating Reedy cofibration in $\ssp$. Using again~\eqref{eq:calculo_boxtimes}, we have
    $$
        f\boxtimes i_!g = (f_1\boxtimes i_!g_1)\boxdot (f_2\boxc g_2).
    $$
    The map $f_1\boxtimes i_!g_1$ is inner anodyne by \cite[Corollary~6.26]{Heuts2022}, while $f_2\boxc g_2$ is a cofibration of simplicial sets. Thus Proposition~\ref{pr:Segal_simplicial} implies that $f\boxtimes i_!g$ is a trivial cofibration in the Segal model structure, and therefore lifts against $X$, completing the proof.
\end{proof}

We now show that, although $\dsp$ is not strictly enriched, tensored, or cotensored over $\ssp$, and hence $\dsp_{\mathrm{Segal}}$ is not an enriched $\ssp_{\mathrm{Segal}}$-model category, its weak enrichment nevertheless induces a homotopical enrichment over $\ssp_{\mathrm{Segal}}$:

\begin{thm}\label{th:Segal_ho_enriched}
    The tensor product of dendroidal spaces equips $\dsp_{\mathrm{Segal}}$ with the structure of a homotopically $\ssp_{\mathrm{Segal}}$-enriched model category in the sense of \cite[Definition~3.5.4]{Heuts2015}. In particular, the homotopy category of $\dsp_{\mathrm{Segal}}$ is enriched over the homotopy category of $\ssp_{\mathrm{Segal}}$.
\end{thm}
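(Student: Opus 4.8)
The plan is to verify the three ingredients demanded by \cite[Definition~3.5.4]{Heuts2015}: that the base $\sspsegal$ is a monoidal model category, that the action $(-)\otimes i_!(-)$ is a left Quillen bifunctor, and that the weak enrichment coherence maps $\alpha_{M,N,X}$ of \eqref{eq:weak_enrichment} are weak equivalences whenever $M$, $N$ and $X$ are cofibrant. The underlying weak enrichment, tensoring and cotensoring are already in place from the discussion of monoidality above; only their homotopical compatibility remains to be established.

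For the base, $\sspsegal$ is Cartesian monoidal: the product is a left Quillen bifunctor on the Segal space model structure, which is well known (see, e.g., \cite{Rezk_2001}). The unit coherence is strict, since $i_!$ carries the terminal representable simplicial space $\std{0}$ to $\om{\eta}$, the unit for $\otimes$. The pushout--product axiom for the action $(-)\otimes i_!(-)\colon\dspsegal\times\sspsegal\to\dspsegal$ is precisely the content of Proposition~\ref{pr:Segal_tensor}, and its trivial-cofibration half is covered by the same statement.

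The heart of the argument, and the step I expect to be the main obstacle, is showing that each $\alpha_{M,N,X}$ is a weak equivalence in $\dspsegal$ for cofibrant inputs. I would argue by a double cellular induction. Fixing cofibrant $M,N$ (so that $M\times N$ is again cofibrant, by monoidality of $\sspsegal$), both functors $X\mapsto i_!(M\times N)\otimes X$ and $X\mapsto i_!M\otimes(i_!N\otimes X)$ preserve colimits in $X$ and, using the symmetry of $\otimes$ together with Proposition~\ref{pr:Segal_tensor}, send normal monomorphisms to Segal cofibrations and trivial ones to trivial ones (the second functor being a composite of two such left Quillen functors). Since $\dspsegal$ is left proper, the gluing lemma applied along the skeletal filtration of normal dendroidal spaces reduces the claim to the representable case $X=\om{T}\cdot\std{n}$. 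An entirely analogous reduction in the variables $M$ and $N$---using that $(-)\times N$ and $(-)\times M$ are left Quillen on $\sspsegal$, that $i_!$ preserves normal monomorphisms, and again Proposition~\ref{pr:Segal_tensor}---then reduces $M$ and $N$ to representable simplicial spaces.

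For representable inputs the objects $i_!M$ and $i_!N$ are (simplicial tensors of) linear trees, so $\alpha_{M,N,X}$ becomes a comparison of iterated Boardman--Vogt tensor products in which one factor is linear. In this situation the failure of associativity of $\otimes$ is controlled: the comparison map is assembled from inner anodyne maps, in the spirit of \cite[Corollary~6.26]{Heuts2022}, and is therefore a trivial cofibration in $\dspsegal$ by Remark~\ref{rm:inner_an_triv_cof_segal}. The delicate points will be, first, arranging the double induction so that the gluing lemma genuinely applies at each stage, and second, identifying $\alpha$ on representables with such an inner anodyne comparison, which requires unwinding the shuffle description of the dendroidal tensor product against linear trees. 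Granting these, the final assertion about homotopy categories follows formally, since a homotopically enriched model category in the sense of \cite[Definition~3.5.4]{Heuts2015} induces an enrichment of its homotopy category over that of the base.
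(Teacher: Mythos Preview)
Your overall strategy is sound and lands on the same core fact as the paper---that the associator comparison is governed by an inner anodyne map of dendroidal \emph{sets}---but there are two points where your route diverges from the paper's and where the paper's is both shorter and safer.

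First, the second axiom in \cite[Definition~3.5.4]{Heuts2015} is stated not for the object map $\alpha_{M,N,X}$ on cofibrant inputs, but for the induced pushout--product comparison $\phi_{f,g,h}$ attached to cofibrations $f,g$ in $\sspsegal$ and $h$ in $\dspsegal$: one must show that $\phi_{f,g,h}$ is a \emph{trivial cofibration}. Your condition is the special case $f,g,h$ of the form $\emptyset\to(-)$, and it is not obvious that it implies the general one. The paper therefore works directly with $\phi_{f,g,h}$; since trivial cofibrations are saturated, one may reduce to generating cofibrations without any gluing lemma or appeal to left properness, which removes precisely the ``delicate point'' you flag about arranging the double induction.

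Second, once $f,g,h$ are boundary inclusions of representables, the paper separates the dendroidal and simplicial directions by the same manipulation used in Proposition~\ref{pr:Segal_tensor}: writing $f=f_1\boxdot f_2$, $g=g_1\boxdot g_2$, $h=h_1\boxdot h_2$ and using that $\boxtimes$ preserves $\boxdot$, one obtains
\[
\phi_{f,g,h}\;=\;\phi_{f_1,g_1,h_1}\boxdot(f_2\boxc g_2\boxc h_2),
\]
with $\phi_{f_1,g_1,h_1}$ a map of dendroidal \emph{sets}. That this map is inner anodyne is exactly \cite[Lemma~3.8.1]{Heuts2015}, so no unwinding of the shuffle description is needed; Proposition~\ref{pr:Segal_simplicial} then finishes the argument. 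Your reference to \cite[Corollary~6.26]{Heuts2022} points in the right direction but is not the statement you need; Lemma~3.8.1 of \cite{Heuts2015} is tailor-made for this comparison and should be cited instead.
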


\begin{proof}
    Two conditions must be verified. The first is precisely the content of Proposition~\ref{pr:Segal_tensor}. It remains to show that for any monomorphisms $f,g$ in $\ssp_{\mathrm{Segal}}$ and any normal monomorphism $h$ in $\dsp_{\mathrm{Segal}}$, the map labelled $\phi_{f,g,h}$ in the diagram
    $$
        \begin{tikzcd}
	    \bullet & \bullet \\
	    \bullet & \bullet \\
	    && \bullet
	    \arrow["\alpha", from=1-1, to=1-2]
	    \arrow["{i_!(f\boxc g)\boxtimes h}"', from=1-1, to=2-1]
	    \arrow[from=1-2, to=2-2]
	    \arrow["{i_!f\boxtimes(i_!g\boxtimes h)}", bend left, from=1-2, to=3-3]
	    \arrow[from=2-1, to=2-2]
	    \arrow["\alpha"', bend right, from=2-1, to=3-3]
	    \arrow["{\phi_{f,g,h}}"{description}, from=2-2, to=3-3]
        \end{tikzcd}
    $$
    is a trivial cofibration in $\dsp_{\mathrm{Segal}}$. This diagram arises from the naturality of the associator map $\alpha$ appearing in~\eqref{eq:weak_enrichment}.
    Using the same reductions as in the previous proofs, we may assume that $f$, $g$, and $h$ are boundary inclusions of representables, so that we have for example
    $$
        h=h_1\boxdot h_2
    $$
    with $h_1\colon\partial T\to T$ and $h_2\colon\partial\std{n}\to\std{n}$. Since $\boxtimes$ preserves $\boxdot$, we may rearrange the corresponding factors in the diagram above to obtain an expression of the form
    $$
    \phi_{f,g,h}=\phi_{f_1,g_1,h_1}\boxdot (f_2\boxc g_2\boxc h_2),
    $$
    where $\phi_{f_1,g_1,h_1}$ is a map of dendroidal \emph{sets}. By \cite[Lemma~3.8.1]{Heuts2015}, the map $\phi_{f_1,g_1,h_1}$ is inner anodyne. Proposition~\ref{pr:Segal_simplicial} then implies that $\phi_{f,g,h}$ is a trivial cofibration in $\dsp_{\mathrm{Segal}}$, completing the proof.
\end{proof}

\section{Constructing the model structure}

We now focus on proving the existence of the model structure of Theorem~\ref{thm:main}, whose statement we reproduce here for ease of reference:

\begin{thm}\label{thm:main_existence}
    There exists a left proper, cofibrantly generated model structure on $\dsp$, which we denote by $\dsp_{\mathrm{Op}}$, such that:
    \begin{itemize}
        \item[{\rm (i)}] The cofibrations are the normal monomorphisms $f\colon X\to Y$ for which there exists a set $R$ and a weak equivalence $X_\eta \coprod R \to Y_\eta$ extending $f_\eta$.
        \item[{\rm (ii)}] the fibrant objects are the dendroidal Segal spaces.
        \item[{\rm (iii)}] The weak equivalences between fibrant objects are the Dwyer–Kan equivalences;
        \item[{\rm (iv)}] The fibrations between fibrant objects are the isofibrations.
    \end{itemize}
    Moreover, this model structure is homotopically enriched (in the sense of \cite{Heuts2015}) over the structure $\sspcat$ introduced in \cite{moser2024}, which in fact arises as the slice of $\dsp_{\mathrm{Op}}$ over $\eta$.
\end{thm}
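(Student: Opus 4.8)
The plan is to build $\dsp_{\mathrm{Op}}$ not as a further Bousfield localization of $\dspsegal$ (which would keep \emph{all} normal monomorphisms as cofibrations), but by the enriched construction of \cite{guetta2023} that Moser--Nuiten use for $\sspcat$, adapted to the weak enrichment of $\dsp$ over $\ssp$ recorded in \eqref{eq:weak_enrichment}. Concretely, I would apply a recognition theorem for combinatorial model categories to the pair $(I,W)$. Here $I$ is the set obtained by combining the Reedy generating cofibrations $(\partial T\to T)\boxdot(\partial\std{n}\to\std{n})$ at every nontrivial tree $T\neq\eta$ (all of which are isomorphisms at $\eta$, since $(\partial T)_\eta=T_\eta$) with the images under $i_!$ of the generating cofibrations of $\sspcat$ along the slice embedding $\ssp\cong\dsp_{/\Omega[\eta]}$; in particular the map $\emptyset\to\Omega[\eta]$ (the image of $\emptyset\to N[0]$ under $i_!$) belongs to $I$ and accounts for the addition of colours, while the excluded Reedy generators at $T=\eta$ in positive simplicial degree are what force the space of colours of a cofibrant object to be homotopically discrete. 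The weak equivalences $W$ are the \emph{enriched local equivalences}: maps $f\colon A\to B$ such that $i^*(X^{\{B\}})\to i^*(X^{\{A\}})$ is a weak equivalence in $\sspcat$ for every dendroidal Segal space $X$.

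The first task is combinatorial: to prove that $\operatorname{cof}(I)$ is exactly the class described in (i). Away from $\eta$ this is the skeletal filtration of normal monomorphisms restricted to nontrivial trees, while at $\eta$ one must match the closure of the $i_!$-images of the $\sspcat$-generators with the condition that $X_\eta\coprod R\to Y_\eta$ be a weak equivalence for some set $R$; the latter is imported directly from the corresponding description of the cofibrations of $\sspcat$ in \cite{moser2024}. Accessibility of $W$, together with its closure under $2$-out-of-$3$ and retracts, is formal, since $W$ is defined by an accessible condition through the cotensors $X^{\{-\}}$.

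The heart of the argument is verifying the two remaining hypotheses of the recognition theorem. That $\operatorname{inj}(I)\subseteq W$ amounts to showing that a map which is a trivial Reedy fibration at every tree $T\neq\eta$ and is surjective on colours is an enriched local equivalence; this is the dendroidal counterpart of the analogous statement in \cite{moser2024}, checked by computing the effect on the cotensors $X^{\{-\}}$ and descending to $\sspcat$ along $i^*$. The closure of $\operatorname{cof}(I)\cap W$ under pushout and transfinite composition I would obtain by exhibiting a set $J$ of generating trivial cofibrations, assembled from the spine inclusions, the Reedy trivial cofibrations at nontrivial trees, and the $i_!$-images of the generating trivial cofibrations of $\sspcat$. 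This is where the main obstacle lies: one needs an \emph{enriched pushout-product} statement asserting that $f\boxtimes i_!g$ is a (trivial) cofibration of $\dsp_{\mathrm{Op}}$ whenever $f$ is a cofibration of $\dsp_{\mathrm{Op}}$ and $g$ is a (trivial) cofibration of $\sspcat$, the dendroidal analogue of Proposition~\ref{pr:Segal_tensor} and Theorem~\ref{th:Segal_ho_enriched}, which simultaneously yields the claimed homotopical enrichment over $\sspcat$. Because $\otimes$ is only weakly (colax, non-associative) monoidal, the coherence maps $\alpha_{M,N,X}$ of \eqref{eq:weak_enrichment} must be carried through the proof; as in Theorem~\ref{th:Segal_ho_enriched} I would reduce to representables, use that $\boxtimes$ distributes over $\boxdot$, and invoke \cite[Lemma~3.8.1]{Heuts2015} together with Proposition~\ref{pr:Segal_simplicial}, now tracking the restricted cofibration class at $\eta$ throughout.

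It remains to match the construction with the stated characterizations. The fibrant objects are those with the right lifting property against $J$; a dendroidal Segal space lifts against all of $J$ by Reedy fibrancy and the Segal condition, and conversely such lifting forces Reedy fibrancy away from $\eta$, the Segal condition (via the spine inclusions), and fibrancy of $X_\eta$ (via the $\sspcat$-part at $\eta$), so the fibrant objects are exactly the dendroidal Segal spaces, giving (ii). For (iii), a map of Segal spaces lies in $W$ if and only if it is inverted by mapping into all Segal spaces as measured in $\sspcat$; since the $\sspcat$-equivalences between Segal spaces are the Dwyer--Kan equivalences by \cite{moser2024}, combining Proposition~\ref{pr:char_fullyfaithful}, Remark~\ref{rm:alt_fullyfaithful} and the equivalence $\gamma_*X\to X$ of Example~\ref{ex:gammaXtoX_DK} identifies these with the maps of Definition~\ref{df:DKeq}. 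Part (iv) follows from Proposition~\ref{pr:char_isofib} and Proposition~\ref{pr:exp_isofib}, which characterize the fibrations between fibrant objects by lifting as the isofibrations. Finally, left properness is established as in \cite{moser2024}, cofibrant generation is part of the recognition theorem, and the slice identification $\dsp_{\mathrm{Op}}/\Omega[\eta]\simeq\sspcat$ follows from $\ssp\cong\dsp_{/\Omega[\eta]}$ and $i_!i^*X\cong\Omega[\eta]\times X$ by matching generating cofibrations and fibrant objects under $i^*$.
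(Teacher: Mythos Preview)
Your approach differs substantially from the paper's. The paper does not apply a Smith-type recognition theorem to a pair $(I,W)$ with $W$ defined globally as enriched local equivalences. Instead it invokes the \emph{fibrantly-induced} recognition theorem of \cite{guetta2023} (Theorem~\ref{th:fibrantly-induced}), whose input is a triple $(\mathcal{I},\mathcal{J},\mathcal{W}_f)$ with $\mathcal{W}_f$ specified only between $\mathcal{J}$-fibrant objects and taken to be the Dwyer--Kan equivalences directly. The technical core is then the verification of conditions (III)--(V): a path-object factorization via $X^{\{i_!NI[1]\}}$ (Proposition~\ref{pr:fact_J-fib}), the identification of $\mathcal{J}$-fibrations that are DK equivalences with $\mathcal{I}$-fibrations (Proposition~\ref{pr:char_I-fib_bw_Segal}), and, crucially, the proof that every $\mathcal{I}$-fibration admits a DK-equivalent fibrant replacement. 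This last step rests on the fact that pullback along $\cosk_\eta(p)$ for a surjective Kan fibration $p$ is left Quillen on $\dspsegal$ (Proposition~\ref{or:oullback_cosk_left_Quillen}), established by an explicit inner-anodyne filtration argument; nothing in your outline plays the role of this result.

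There is also a circularity risk in your plan. You propose to obtain closure of $\operatorname{cof}(I)\cap W$ under pushouts by proving the enriched pushout-product for $\dspop$, but in the paper this statement (Proposition~\ref{pr:comp_with_Cat}) is established \emph{after} the model structure exists: its proof for the generator $g=N[0]\to NI[1]$ uses Lemma~\ref{lm:pseudo_gen} together with the characterization of $\mathcal{I}$-fibrations between Segal spaces as isofibrations that are DK equivalences (Proposition~\ref{pr:char_I-fib_bw_Segal}), both of which presuppose that $\dspop$ is already a model category. A direct argument bypassing this would be needed, and ``tracking the restricted cofibration class at $\eta$'' does not supply one. Separately, your global $W$ is not obviously well-behaved on non-normal objects, since $X^{\{A\}}$ need not be Segal when $A$ is not normal, so ``weak equivalence in $\sspcat$'' between the resulting simplicial spaces has no a priori meaning in terms of DK equivalences.
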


To construct this model structure we appeal to the fibrantly generated recognition theorem of \cite{guetta2023}, which we recall for the reader’s convenience:

\begin{thm}\label{th:fibrantly-induced}
    Let $\mathcal{C}$ be a locally presentable category, and let $\mathcal{I}$ and $\mathcal{J}$ be sets of morphisms in $\mathcal{C}$ such that $\mathcal{J} \subset \mathcal{I}\text{-cof}$. Suppose moreover that we are given a class $\mathcal{W}_f$ of morphisms between $\mathcal{J}$-fibrant objects satisfying:
    \begin{enumerate}[label=\textnormal{(\Roman*)}]
        \item \label{condI} $\mathcal{W}_f$ satisfies 2-out-of-6.
        \item \label{condII} There exists a class $\overline{W}$ of morphisms in $\mathcal{C}$ whose restriction to the morphisms between $\mathcal{J}$-fibrant objects is precisely $\mathcal{W}_f$, and for which $\overline{W}$, viewed as a full subcategory of the category of arrows $\mathcal{C}^{[1]}$, is accessible.
        \item \label{condIII} For every $\mathcal{J}$-fibrant object $X$, the diagonal morphism factors as
        $$
            X \xrightarrow{w} \mathrm{Path}(X) \xrightarrow{p} X\times X
        $$
        with $w \in \mathcal{W}_f$ and $p \in \mathcal{J}\text{-fib}$;
        \item \label{condIV} $\mathcal{J}\text{-fib} \cap \mathcal{W}_f \subset \mathcal{I}\text{-fib}$;
        \item \label{condV} $\mathcal{I}\text{-fib} \subset \mathcal{W}$, where $\mathcal{W}$ denotes the class of morphisms of $\mathcal{C}$ admitting a $\mathcal{J}$-fibrant replacement lying in $\mathcal{W}_f$.
    \end{enumerate}
    Then $\mathcal{C}$ admits a cofibrantly generated model structure in which the cofibrations are the $\mathcal{I}$-cofibrations, the fibrant objects are the $\mathcal{J}$-fibrant objects, and the weak equivalences are the morphisms in $\mathcal{W}$. Moreover, among fibrant objects the weak equivalences and fibrations are precisely $\mathcal{W}_f$ and $\mathcal{J}\text{-fib}$, respectively.
\end{thm}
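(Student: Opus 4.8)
The plan is to run Jeff Smith's recognition theorem for combinatorial model categories, taking $\mathcal{I}$ as generating cofibrations and the class $\mathcal{W}$ of \ref{condV} as weak equivalences, and then to identify the fibrant objects with the $\mathcal{J}$-fibrant ones. Smith's theorem (in its accessible formulation) needs: that $\mathcal{W}$ satisfy two-out-of-three and be closed under retracts; that $\mathcal{W}$ be an accessible, accessibly embedded subcategory of $\mathcal{C}^{[1]}$; that $\mathcal{I}\text{-inj}\subseteq\mathcal{W}$; and that $\mathcal{I}\text{-cof}\cap\mathcal{W}$ be closed under pushout and transfinite composition. Since $\mathcal{C}$ is locally presentable and $\mathcal{I},\mathcal{J}$ are sets, the small object argument applies, so factoring each $X\to *$ through $\mathcal{J}$ yields a functorial $\mathcal{J}$-fibrant replacement $r_X\colon X\to\hat X$ with $r_X\in\mathcal{J}\text{-cell}\subseteq\mathcal{I}\text{-cof}$, and hence a functorial replacement $R\colon\mathcal{C}^{[1]}\to\mathcal{C}^{[1]}$ of arrows; by definition $f\in\mathcal{W}$ if and only if $R(f)\in\mathcal{W}_f$.

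The first technical core is to promote $\mathcal{W}_f$ to a well-behaved class on all of $\mathcal{C}$. Condition \ref{condI} makes $\mathcal{W}_f$ saturated, and together with the path-object factorization \ref{condIII} and the closure of $\mathcal{J}\text{-fib}$ under pullback it equips the full subcategory of $\mathcal{J}$-fibrant objects with the structure of a category of fibrant objects in the sense of K.~Brown. In particular both projections $\mathrm{Path}(X)\to X$ lie in $\mathcal{W}_f$, so right-homotopic maps agree on membership in $\mathcal{W}_f$; and if $w$ admits a homotopy inverse $w'$, then $w'w$ and $ww'$ are right-homotopic to identities, hence lie in $\mathcal{W}_f$, and two-out-of-six applied to the composable triple $w,w',w$ forces $w\in\mathcal{W}_f$. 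This deduction, where the strength of \ref{condI} is genuinely used, shows that any two $\mathcal{J}$-fibrant replacements of an object are joined by a map in $\mathcal{W}_f$ (lift $r_X$ against $r_X'$ both ways and compare the composites via the path object), whence $\mathcal{W}$ is independent of the chosen replacement and restricts to $\mathcal{W}_f$ on maps of fibrant objects. Two-out-of-three and closure under retracts for $\mathcal{W}$ then descend from $\mathcal{W}_f$ through $R$. For accessibility I would use the identity $\mathcal{W}=R^{-1}(\overline{W})$: since $R(f)$ is always a map of $\mathcal{J}$-fibrant objects and $\overline{W}$ of \ref{condII} restricts there to $\mathcal{W}_f$, this holds, and as $R$ is an accessible endofunctor and $\overline{W}\hookrightarrow\mathcal{C}^{[1]}$ is accessible and accessibly embedded, the pseudopullback presentation exhibits $\mathcal{W}$ as accessible.

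With these in hand, $\mathcal{I}\text{-inj}\subseteq\mathcal{W}$ is exactly \ref{condV}, and the remaining Smith hypothesis, closure of $\mathcal{I}\text{-cof}\cap\mathcal{W}$ under pushout and transfinite composition, is \textbf{the main obstacle}. Here accessibility supplies, via the solution-set argument, a set $\mathcal{J}'\subseteq\mathcal{I}\text{-cof}\cap\mathcal{W}$ of generating trivial cofibrations, and the delicate point is that the resulting would-be trivial fibrations, namely maps in $\mathcal{J}'\text{-inj}\cap\mathcal{W}$, coincide with $\mathcal{I}\text{-inj}$. This is where \ref{condIV} is essential: a $\mathcal{J}$-fibration that is a $\mathcal{W}_f$-equivalence is $\mathcal{I}$-injective, and a Brown-type argument using the path object of \ref{condIII} reduces the general statement about maps of $\mathcal{W}$ to this fibrant case (replacing source and target $\mathcal{J}$-fibrantly and transporting lifts along the path object). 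Granting this, Smith's theorem produces the desired cofibrantly generated model structure, with cofibrations $\mathcal{I}\text{-cof}$, weak equivalences $\mathcal{W}$, trivial fibrations $\mathcal{I}\text{-inj}$, and fibrations $(\mathcal{I}\text{-cof}\cap\mathcal{W})\text{-inj}$.

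It remains to identify the fibrant objects. That every $\mathcal{J}$-fibrant object is fibrant follows by showing that any $p\in\mathcal{J}\text{-fib}$ between $\mathcal{J}$-fibrant objects lifts against $\mathcal{I}\text{-cof}\cap\mathcal{W}$, again a Brown-type argument from \ref{condIII} and \ref{condIV}, applied to $X\to *$. For the converse I must show $\mathcal{J}\subseteq\mathcal{W}$: for $j\colon C\to D$ in $\mathcal{J}$, fibrancy of $\hat C$ lets me extend $r_C$ along $j$ to a homotopy section of the comparison $\hat\jmath\colon\hat C\to\hat D$, and the path-object and two-out-of-six argument of the second paragraph gives $\hat\jmath\in\mathcal{W}_f$, hence $j\in\mathcal{W}$ by two-out-of-three. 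Since $\mathcal{J}\subseteq\mathcal{I}\text{-cof}$, this places $\mathcal{J}$ among the trivial cofibrations, so every fibrant object is $\mathcal{J}$-injective, that is, $\mathcal{J}$-fibrant. The inclusion $\mathcal{J}\subseteq\mathcal{I}\text{-cof}\cap\mathcal{W}$ also yields $(\mathcal{I}\text{-cof}\cap\mathcal{W})\text{-inj}\subseteq\mathcal{J}\text{-fib}$, while the Brown argument gives the reverse for maps between fibrant objects, so the fibrations between fibrant objects are precisely $\mathcal{J}\text{-fib}$. Finally, the weak equivalences between fibrant objects coincide with $\mathcal{W}_f$, because a fibrant object is its own replacement and $\mathcal{W}$ restricts to $\mathcal{W}_f$ there. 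This completes the verification of all assertions.
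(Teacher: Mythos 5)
The paper itself does not prove this theorem: it is recalled verbatim from \cite{guetta2023} precisely so that its proof can be outsourced, so there is no in-paper argument to compare you against; what follows assesses your proposal on its own terms.

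Your framework (Smith's recognition theorem applied to $\mathcal{I}$ and the replacement-detected class $\mathcal{W}$) is a sensible route, and the groundwork in your second paragraph --- independence of $\mathcal{W}$ from the choice of $\mathcal{J}$-fibrant replacement via lifting and path objects, the identity $\mathcal{W}=R^{-1}(\overline{\mathcal{W}})$ feeding the accessibility argument --- is the standard Brown-type machinery and is sound in outline. The genuine gap sits exactly where you flag ``the main obstacle'': closure of $\mathcal{I}\text{-cof}\cap\mathcal{W}$ under pushout and transfinite composition is a \emph{hypothesis} of Smith's theorem, and you never verify it. What you offer instead --- a solution-set choice of $\mathcal{J}'\subseteq\mathcal{I}\text{-cof}\cap\mathcal{W}$ and the claim $\mathcal{J}'\text{-inj}\cap\mathcal{W}=\mathcal{I}\text{-inj}$ --- is part of the \emph{internal} proof machinery of Smith's theorem, not a verification of its input, and is itself only asserted (``Granting this\ldots''). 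Moreover, upgrading that claim to the needed statement $\mathcal{I}\text{-cof}\cap\mathcal{W}=\mathcal{J}'\text{-cof}$ would presuppose the very closure property at issue: to get $\mathcal{J}'\text{-cof}\subseteq\mathcal{W}$ one must already know that pushouts and transfinite composites of maps of $\mathcal{J}'$ stay in $\mathcal{W}$. The difficulty is real, not bookkeeping: $\mathcal{W}$ is defined through $\mathcal{J}$-fibrant replacement, and replacement has no exactness property whatsoever with respect to cobase change, so none of conditions (I)--(V) hands you this closure. The actual technical heart of the theorem in \cite{guetta2023} is to first recharacterize the maps of $\mathcal{I}\text{-cof}\cap\mathcal{W}$ by a property that is \emph{visibly} stable under pushout and transfinite composition --- for instance a (homotopy) lifting property against $\mathcal{J}$-fibrations between $\mathcal{J}$-fibrant objects, or bijectivity on path-object homotopy classes of maps into every $\mathcal{J}$-fibrant object --- using (III), (IV) and (V) in combination; only after that does a Smith-type argument close. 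That recharacterization is entirely absent from your proposal.

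Two smaller points, both repairable. First, your Brown-type arguments repeatedly use that identities of $\mathcal{J}$-fibrant objects and the path-object projections $\mathrm{Path}(X)\to X$ lie in $\mathcal{W}_f$; with 2-out-of-6 alone (and no hypothesis that $\mathcal{W}_f$ contains identities) this is not free and needs to be addressed, either by an explicit derivation or by the convention that a ``class of weak equivalences'' contains identities. Second, there is an ordering problem: the reductions in your third paragraph transport lifts along the replacement maps, which are $\mathcal{J}$-cell maps, and for this you need $\mathcal{J}\subseteq\mathcal{W}$ (so that these maps are trivial cofibrations); but you only prove $\mathcal{J}\subseteq\mathcal{W}$ in the final paragraph, after the model structure has supposedly been constructed.
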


Our verification of the hypotheses of Theorem~\ref{th:fibrantly-induced} follows closely the strategy of \cite{moser2024}. We begin by analysing the class of cofibrations that will appear in the resulting model structure.

\subsection{Generating cofibrations}

\begin{defn}
    We define $\mathcal{I}$ to be the set of maps in $\dsp$ consisting of:
    \begin{enumerate}
        \item[{\rm (i)}] For every tree $T$ with at least one vertex and every $n\geq 0$, the map
        \[
            \pop{\cdot}{\partial T}{T}{\partial\std{n}}{\std{n}}.
        \]
        \item[{\rm (ii)}] For all $n\geq 1$ and $0\leq k\leq n$, the map
        \[
            \eta\cdot \horn{k}{n}\longrightarrow \eta\cdot \std{n}.
        \]
        \item[{\rm (iii)}] The map
        \[
            \emptyset\longrightarrow \eta\cdot \std{0}.
        \]
    \end{enumerate}
\end{defn}

\begin{prop}\label{pr:char_I-fibs}
    A map $p\colon X\to Y$ is an $\mathcal{I}$-fibration if and only if:
    \begin{enumerate}
        \item[{\rm (i)}] The map $p_\eta\colon X_\eta\to Y_\eta$ is a surjective Kan fibration.
        \item[{\rm (ii)}] For every tree $T\neq\eta$, the induced map
        \[
            p_T\colon X_T\longrightarrow Y_T\times_{Y_{\partial T}} X_{\partial T}
        \]
        is a trivial Kan fibration.
    \end{enumerate}
\end{prop}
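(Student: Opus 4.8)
The statement characterizes $\mathcal{I}$-fibrations, i.e.\ maps with the right lifting property against the set $\mathcal{I}$. The plan is to unpack this lifting property generator by generator and translate each into the stated conditions (i) and (ii), using the two-variable adjunction for the simplicial tensor $(-)\cdot(-)\dashv\Map{-}{-}$ together with the standard facts about Kan fibrations.

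First I would handle the generators of type (i), namely the pushout-product maps $\pop{\cdot}{\partial T}{T}{\partial\std{n}}{\std{n}}$ for $T\neq\eta$. By the pushout-product/adjunction formalism, a map $p\colon X\to Y$ lifts against all of these if and only if, for every tree $T\neq\eta$, the relative matching map $X_T\to Y_T\times_{Y_{\partial T}}X_{\partial T}$ has the right lifting property against every simplex boundary inclusion $\partial\std{n}\to\std{n}$, which is exactly the statement that this matching map is a trivial Kan fibration. This yields condition (ii). Note that this is the same computation underlying the generating Reedy (trivial) cofibrations in~\eqref{eq:gen_cof_Reedy}, restricted to trees with at least one vertex.

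Next I would treat the generators of types (ii) and (iii), which involve only $\eta$ and so should translate into a condition purely on the map $p_\eta\colon X_\eta\to Y_\eta$ at the level of $\eta$. Since $\eta\cdot(-)$ is left adjoint to $\Map{\eta}{-}=(-)_\eta$, lifting against $\eta\cdot\horn{k}{n}\to\eta\cdot\std{n}$ for all $n\ge 1$ and $0\le k\le n$ is equivalent to $p_\eta$ having the right lifting property against every horn inclusion, i.e.\ $p_\eta$ being a Kan fibration; and lifting against $\emptyset\to\eta\cdot\std{0}$ is equivalent to $p_\eta$ being surjective on vertices, hence (together with the Kan fibration condition, by standard simplicial homotopy theory) a surjective Kan fibration. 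This gives condition (i). I would assemble the two halves to conclude that $p$ is an $\mathcal{I}$-fibration precisely when both (i) and (ii) hold.

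The main point requiring care is the bookkeeping of the pushout-product identity in the first step: one must confirm that lifting $p$ against $f_1\boxdot f_2$ (with $f_1\colon\partial T\to T$ and $f_2\colon\partial\std{n}\to\std{n}$) is equivalent, via the adjunction $(-)\cdot(-)\dashv\Map{-}{-}$, to lifting the relative matching map $X_T\to Y_T\times_{Y_{\partial T}}X_{\partial T}$ against $f_2$, and that ranging over all $n$ yields exactly ``trivial Kan fibration'' rather than some weaker condition. This is the standard adjunction argument already invoked for~\eqref{eq:gen_cof_Reedy}, but it is where the identification of the matching map with $\Map{\partial T}{-}$ must be used explicitly. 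The only genuinely new feature compared to the Reedy case is the special role of $\eta$: because the type-(i) generators exclude $\eta$, the behaviour at $\eta$ is governed solely by the type-(ii) and type-(iii) generators, producing a surjective Kan fibration rather than a trivial fibration there, which is precisely the asymmetry recorded in the statement.
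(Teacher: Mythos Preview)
Your proposal is correct and follows exactly the same approach as the paper: unpack the right lifting property against each of the three types of generators in $\mathcal{I}$, using the two-variable adjunction for the simplicial tensor to translate type~(i) into the trivial-fibration condition on the relative matching maps for $T\neq\eta$, and types~(ii) and~(iii) into the surjective Kan fibration condition on $p_\eta$. The paper's proof is a terse one-paragraph summary of precisely these steps, so your expanded version is simply a more detailed rendering of the same argument.
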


\begin{proof}
    This is a direct unpacking of the lifting properties with respect to the generators of $\mathcal{I}$.  
    The second type of generating cofibrations ensures that $p_\eta$ is a Kan fibration, and the third forces $p_\eta$ to be surjective on vertices, and hence surjective. The generators of the first type then impose the stated condition (ii) for all $T\neq\eta$.
\end{proof}

We next identify the corresponding class of cofibrations. For this we first define the following notion.

\begin{defn}
    Let $f\colon X\to Y$ be a monomorphism of simplicial sets. We say that $f$ is an \emph{extension by a set} if there exists a set $R$ and a weak equivalence
    \begin{equation}\label{eq:ext}
        X\coprod R \longrightarrow Y
    \end{equation}
    extending the morphism~$f$.
\end{defn}

Such maps are not neccessarily literal extensions by sets, but they behave this way homotopically.  
Since the map in \eqref{eq:ext} induces an injection on $\pi_0$ and is a monomorphism on each component of the domain, then it is necessarily a trivial cofibration. Hence extensions by a set may be viewed as cofibrations that are “almost” trivial, differing only by the addition of a set.

\begin{rem}\label{rm:char_ext}
    If $f\colon X\to Y$ is an extension by a set and $Y$ is connected, then either $f$ is a trivial cofibration or else $X=\emptyset$ and $Y$ is contractible.  
    Conversely, any map of these two types is an extension by a set.  
    Moreover, $f\coprod g$ is an extension by a set if and only if both $f$ and $g$ are.  
    It follows that a map is an extension by a set if and only if it is of one of these two types over every connected component of its target.
\end{rem}

The class of such maps can be described as follows.

\begin{prop}\label{pr:ext_by_set_char}
    The class of extensions by sets is the smallest saturated class of maps of simplicial sets containing all horn inclusions $\horn{k}{n}\to \std{n}$ and the map $\emptyset \to \std{0}$.
\end{prop}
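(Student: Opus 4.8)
The plan is to prove the two inclusions separately, writing $S$ for the saturated class generated by the horn inclusions $\horn{k}{n}\to\std{n}$ together with $\emptyset\to\std{0}$, and $\mathcal{E}$ for the class of extensions by a set. For the inclusion $\mathcal{E}\subseteq S$, I would factor any extension by a set $f\colon X\to Y$ through its defining weak equivalence as $X\to X\coprod R\to Y$. The first map is the pushout of $\emptyset\to R\cong\coprod_{R}\std{0}$ (itself a coproduct of copies of the generator $\emptyset\to\std{0}$) along $\emptyset\to X$, and is therefore in $S$; the second map is a trivial cofibration, as observed just after the definition of extension by a set, hence an anodyne map, and so lies in the saturated class generated by the horn inclusions, which is contained in $S$. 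Since saturated classes are closed under composition, $f\in S$.

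For the reverse inclusion $S\subseteq\mathcal{E}$, it suffices to show that $\mathcal{E}$ is itself a saturated class containing the generators, and throughout I would use the componentwise description of Remark~\ref{rm:char_ext}: a monomorphism is an extension by a set precisely when, over each connected component $C$ of its target, it restricts either to a trivial cofibration or to a map $\emptyset\to C$ with $C$ contractible. The generators lie in $\mathcal{E}$, since every trivial cofibration (in particular every horn inclusion) is an extension by a set with $R=\emptyset$, while $\emptyset\to\std{0}$ is one with $R$ a point.

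Closure under pushouts and transfinite composition then follows from the componentwise picture. Writing $f$ as a trivial cofibration $X\to Y_{\mathrm{we}}$ onto the components of the target met by $X$, together with a disjoint family $\emptyset\to Y_{\mathrm{ex}}$ of contractible components, a pushout along any $X\to X'$ alters only the $Y_{\mathrm{we}}$ summand—along which trivial cofibrations are stable—while $Y_{\mathrm{ex}}$ rides along disjointly, so the componentwise criterion persists. For a transfinite composite one first notes that every map in $\mathcal{E}$ is injective on $\pi_0$, so that components never merge; each component of the colimit is then either a transfinite composite of trivial cofibrations issuing from a single component of the initial object, or a contractible component introduced at some stage and subsequently enlarged only by trivial cofibrations, hence still contractible.

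The genuinely delicate point, and the one I expect to be the main obstacle, is closure under retracts. Given $f\colon A\to B$ a retract of $g\colon X\to Y\in\mathcal{E}$ with retraction data $i_A,r_A,i_B,r_B$, I would argue componentwise. For a component $C$ of $B$ met by $A$, a connectivity argument shows that these four maps restrict so as to exhibit $A_C\to C$ as a retract of $X_D\to D$ for the appropriate component $D$ of $Y$; since the latter is a trivial cofibration and weak equivalences are closed under retracts, $A_C\to C$ is a weak equivalence, and being a monomorphism it is a trivial cofibration. For a component $C$ of $B$ \emph{not} met by $A$, the relation $f r_A=r_B g$ forces the corresponding component $D$ of $Y$ to be unmet by $X$—otherwise $r_A$ would carry the nonempty $X_D$ into the empty set $f^{-1}(C)$—so that $D$ is contractible and $C$, as a retract of $D$, is contractible as well. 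This verifies the componentwise criterion for $f$, completing the proof that $\mathcal{E}$ is saturated and hence that $S\subseteq\mathcal{E}$.
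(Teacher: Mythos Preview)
Your proof is correct and follows essentially the same strategy as the paper: both directions are handled identically, factoring an extension by a set as $X\to X\coprod R\to Y$ for one inclusion, and using the componentwise characterization of Remark~\ref{rm:char_ext} to verify saturation for the other. Your treatment is considerably more detailed than the paper's---particularly the careful verification of closure under retracts and transfinite composition---whereas the paper dispatches saturation in a single sentence by appealing to the interaction of retracts, pushouts, and transfinite compositions with disjoint unions.
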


\begin{proof}
    The fact that the class of extensions by sets is saturated follows from Remark \ref{pr:ext_by_set_char} and the fact that retracts, pushouts and transfinite compositions commute with disjoint unions. Indeed, a retract of any of the two types of maps in Remark \ref{pr:ext_by_set_char} gives again a map of the same type, a transfinite composition of maps of the two types is again a map of one of the two types, and a pushout of any of those two types is either a trivial cofibration or a disjoint union with a contractible space, which is an extension by a set. Moreover, both $\emptyset\to\std{0}$ and all horn inclusions are extensions by sets, so the class generated by the former is contained in the latter.
    Finally, if $f\colon A\to B$ is an extension by a set, then by definition there exists a set $R$ and a factorization of $f$ as
    $$
        A \longrightarrow A\coprod R \longrightarrow B,
    $$
    where the first map lies in the saturated class generated by $\emptyset\to\std{0}$ and the second lies in the one generated by horn inclusions. Thus $f$ is contained in the class generated by both.
\end{proof}

We can now characterize the $\mathcal{I}$-cofibrations.

\begin{prop}\label{pr:char_I-cof}
    A map $f\colon X\to Y$ in $\dsp$ is an $\mathcal{I}$-cofibration if and only if:
    \begin{enumerate}
        \item[{\rm (i)}] The map $f$ is a normal monomorphism;
        \item[{\rm (ii)}] The map $f_\eta\colon X_\eta\to Y_\eta$ is an extension by a set.
    \end{enumerate}
\end{prop}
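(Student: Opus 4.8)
The plan is to use that $\mathcal{I}\text{-cof}$ is the smallest saturated class containing $\mathcal{I}$, equivalently the class of maps with the left lifting property against every $\mathcal{I}$-fibration. Writing $\mathcal{C}_0$ for the class of maps satisfying (i) and (ii), I would prove the two inclusions $\mathcal{I}\text{-cof}\subseteq\mathcal{C}_0$ and $\mathcal{C}_0\subseteq\mathcal{I}\text{-cof}$ separately, relying on Proposition~\ref{pr:char_I-fibs} for the shape of the $\mathcal{I}$-fibrations and on Proposition~\ref{pr:ext_by_set_char} for the behaviour of extensions by a set.

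For $\mathcal{I}\text{-cof}\subseteq\mathcal{C}_0$ it suffices to show that $\mathcal{C}_0$ is saturated and contains $\mathcal{I}$. The normal monomorphisms form a saturated class, and since evaluation at $\eta$, $(-)_\eta\colon\dsp\to\sset$, preserves all colimits and retracts, the class $\{f : f_\eta\text{ is an extension by a set}\}$ is the preimage under $(-)_\eta$ of the saturated class of Proposition~\ref{pr:ext_by_set_char}, hence saturated; the intersection $\mathcal{C}_0$ of the two is then saturated. It remains to check the generators. All of them are normal monomorphisms: the first by the description~\eqref{eq:gen_cof_Reedy}, the second and third by Lemma~\ref{lm:prod} applied to $\emptyset\to\eta$. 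At $\eta$ the first type of generator is an isomorphism, because $(\partial T)_\eta=T_\eta$ for $T\neq\eta$, while the second and third restrict at $\eta$ to a horn inclusion $\horn{k}{n}\to\std{n}$ and to $\emptyset\to\std{0}$, which are extensions by a set. Hence every generator lies in $\mathcal{C}_0$.

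For $\mathcal{C}_0\subseteq\mathcal{I}\text{-cof}$ I would factor a given $f\colon X\to Y$ in $\mathcal{C}_0$ as $X\to X'\to Y$, where $X'=X\cup\overline{Y_\eta}$ and $\overline{Y_\eta}\subseteq Y$ is the image of the counit $\eta\cdot Y_\eta\to Y$ (recall that $\eta\cdot(-)$, being left adjoint to $(-)_\eta$, agrees with $\operatorname{sk}_\eta$). The map $X\to X'$ is the pushout of $\eta\cdot f_\eta$: since the structure maps $Y_\eta\to Y_T$ are split monomorphisms and $f$ is a levelwise monomorphism, a direct check at each tree identifies the intersection $X\cap\overline{Y_\eta}$ with $\eta\cdot X_\eta$ and its inclusion into $\overline{Y_\eta}\cong\eta\cdot Y_\eta$ with $\eta\cdot f_\eta$, so that the union is the pushout over the intersection. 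As $f_\eta$ is an extension by a set it lies in the saturated class generated by $\{\horn{k}{n}\to\std{n},\ \emptyset\to\std{0}\}$ (Proposition~\ref{pr:ext_by_set_char}), and applying the colimit-preserving functor $\eta\cdot(-)$ carries this class into the saturated closure of the generators of type (ii) and (iii); thus $\eta\cdot f_\eta$, and hence its pushout $X\to X'$, is an $\mathcal{I}$-cofibration. The second map $X'\to Y$ is a subobject inclusion, so a monomorphism, and it is normal because $Y_T\setminus X'_T\subseteq Y_T\setminus X_T$, on which $\operatorname{Aut}(T)$ acts freely since $f$ is normal; by construction it is an isomorphism at $\eta$. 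Filtering it skeletally exhibits $X'\to Y$ as a transfinite composite of pushouts of coproducts of the cells~\eqref{eq:gen_cof_Reedy}; the degree-$0$ stage is trivial because the map is an isomorphism at $\eta$, so only cells with $T\neq\eta$, i.e.\ the generators of type (i), occur. Hence $f$ is a composite of two $\mathcal{I}$-cofibrations.

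The main obstacle is the coordination of the two directions in this factorization. On the one hand one must confirm that the union $X\cup\overline{Y_\eta}$ is genuinely the pushout of $\eta\cdot f_\eta$, which reduces to checking levelwise injectivity at the linear trees and rests on the splitness of the maps $Y_\eta\to Y_T$ together with injectivity of $f$. On the other hand one must see that being an isomorphism at $\eta$ forces the skeletal filtration of $X'\to Y$ to omit the $\eta$-cells, with normality supplying exactly the freeness of the $\operatorname{Aut}(T)$-action needed to present each higher stage as an honest pushout of the generators of type (i).
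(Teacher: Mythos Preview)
Your proof is correct and follows essentially the same strategy as the paper. The paper's factorization
\[
X \xrightarrow{f_1} X\coprod_{\mathrm{sk}_\eta X_\eta}\mathrm{sk}_\eta Y_\eta \xrightarrow{f_2} Y
\]
is precisely your $X\to X'\to Y$: since $\eta\cdot(-)=\mathrm{sk}_\eta$ and the counit $\mathrm{sk}_\eta Y_\eta\to Y$ is a monomorphism (as you note, by splitness of degeneracies at linear trees), the abstract pushout coincides with the union $X\cup\overline{Y_\eta}$ inside $Y$. The paper simply invokes ``the relative skeletal filtration'' where you spell out the pushout identification and the normality of $X'\to Y$; both arguments then conclude identically by observing that $f_1$ lies in the saturation of the type-(ii) and~(iii) generators via Proposition~\ref{pr:ext_by_set_char}, while $f_2$, being an isomorphism at $\eta$, is built only from type-(i) cells.
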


\begin{proof}
    Both the class of normal monomorphisms and the class of maps whose $\eta$-component is an extension by a set are saturated, and their intersection contains the generating maps in $\mathcal{I}$.  
    Hence every $\mathcal{I}$-cofibration satisfies the two stated conditions.

    Conversely, suppose $f$ is a normal monomorphism with $f_\eta$ an extension by a set.  
    Using the relative skeletal filtration, we may factor $f$ as
    $$
        X \xrightarrow{f_1} X\coprod_{\mathrm{sk}_\eta X_\eta}\mathrm{sk}_\eta Y_\eta \xrightarrow{f_2} Y
    $$
    where $f_1$ attaches only cells of the form $\eta\cdot\std{n}$, and $f_2$ attaches cells of the form $T\cdot \std{n}$ with $T$ having at least one vertex.  
    Thus $f_2$ is a (possibly transfinite) composition of pushouts of maps of type~(i) in $\mathcal{I}$.  
    Since $(f_1)_{\eta}=f_\eta$ is an extension by a set, Proposition~\ref{pr:ext_by_set_char} implies that $f_1$ is a composition of pushouts of maps of types~(ii) and~(iii) in $\mathcal{I}$.  
    Therefore $f$ is an $\mathcal{I}$-cofibration.
\end{proof}

\subsection{Generating anodyne extensions}

We now introduce the class of trivial cofibrations that will play a central role in the construction of the model structure.

\begin{defn}
    Let $\mathcal{J}$ be the set of maps in $\dsp$ consisting of:
    \begin{enumerate}
        \item[{\rm (i)}] For all $m\geq 0$, $n\geq 1$, and $0\leq k\leq n$, the map
        $$
            \pop{\cdot}{\partial \om{T}}{\om{T}}{\horn{k}{n}}{\std{n}}.
        $$
        \item[{\rm (ii)}] For all $m\geq 2$ and $n\geq 0$, the map
        $$
            \pop{\cdot}{\spn{T}}{\om{T}}{\partial\std{n}}{\std{n}}.
        $$
        \item[{\rm (iii)}] Either endpoint inclusion
        $$
            \om{\eta}\cong i_!N[0]\longrightarrow i_!NI[1].
        $$
    \end{enumerate}
    We denote by $\mathcal{J}_0\subset \mathcal{J}$ the subset consisting of the maps of types~(i) and~(ii).
\end{defn}

Every$\mathcal{J}_0$-cofibration is, in particular, a trivial cofibration in the Segal model structure $\dsp_{\mathrm{Segal}}$.  
A dendroidal space is $\mathcal{J}_0$-fibrant if and only if it is a Segal space, and a map between Segal spaces is a $\mathcal{J}_0$-fibration if and only if it is a Reedy fibration. Furthermore, Proposition~\ref{pr:char_I-cof} shows that $\mathcal{J}\subset \mathcal{I}\text{-cof}$, so the hypothesis $\mathcal{J}\subset\mathcal{I}$-cof in Theorem~\ref{th:fibrantly-induced} is satisfied.

\begin{prop}\label{pr:J_0_replacement}
    Let $X$ be a dendroidal space. Then:
    \begin{enumerate}
        \item[{\rm (i)}] $X$ is $\mathcal{J}_0$-fibrant if and only if it is $\mathcal{J}$-fibrant.
        \item[{\rm(ii)}] There exists a $\mathcal{J}$-fibrant replacement $r_X\colon X\to\widetilde{X}$ such that $r_X$ is a $\mathcal{J}_0$-cofibration.
    \end{enumerate}
\end{prop}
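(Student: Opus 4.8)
The plan is to prove (i) first and then deduce (ii) by a direct application of the small object argument.

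For (i), the implication ``$\mathcal{J}$-fibrant $\Rightarrow$ $\mathcal{J}_0$-fibrant'' is immediate, since $\mathcal{J}_0\subseteq\mathcal{J}$ forces the right lifting property against the smaller set. The content is the converse: a $\mathcal{J}_0$-fibrant object---equivalently, by the remarks preceding the statement, a dendroidal Segal space---must also lift against the type~(iii) generator $q\colon i_!N[0]\to i_!NI[1]$. To verify this I would transpose to simplicial spaces along the adjunction $i_!\dashv i^*$. Having $X\to *$ lift against $q$ amounts to surjectivity on vertices of the map $\Map{i_!NI[1]}{X}\to\Map{i_!N[0]}{X}$, and by the enriched adjunction $\Map{i_!M}{X}\cong\Map{M}{i^*X}$ this identifies with $\Map{NI[1]}{i^*X}\to\Map{N[0]}{i^*X}$. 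Thus $X\to *$ lifts against $q$ if and only if $i^*X\to *$ lifts against the simplicial endpoint inclusion $N[0]\to NI[1]$. Since $X$ is a dendroidal Segal space, $i^*X$ is a simplicial Segal space, and the latter lifts against $N[0]\to NI[1]$ because this is one of the generating maps of the model structure $\sspcat$ of~\cite{moser2024}, whose fibrant objects are exactly the Segal spaces. Concretely, the lift always exists because every object is the source of the identity equivalence, so the restriction is surjective on vertices. Transposing back yields the required lift, showing that $X$ is $\mathcal{J}$-fibrant.

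For (ii), since $\dsp$ is a presheaf category, hence locally presentable, the small object argument applies to the set $\mathcal{J}_0$. It factors the map $X\to *$ as
$$
    X\xrightarrow{\;r_X\;}\widetilde{X}\longrightarrow *,
$$
where $r_X$ is a relative $\mathcal{J}_0$-cell complex, and in particular a $\mathcal{J}_0$-cofibration, while $\widetilde{X}\to *$ is a $\mathcal{J}_0$-fibration; that is, $\widetilde{X}$ is $\mathcal{J}_0$-fibrant. By part~(i) the object $\widetilde{X}$ is then $\mathcal{J}$-fibrant, so $r_X$ has the desired target. Moreover $r_X$, being a $\mathcal{J}_0$-cofibration, is a trivial cofibration in $\dspsegal$ as recorded above, confirming that it is a genuine weak equivalence and hence an honest fibrant replacement.

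The main obstacle is the converse direction of~(i); everything else is formal bookkeeping. The crux is to see that the extra endpoint generator of type~(iii) imposes no condition on Segal spaces beyond Reedy fibrancy and the Segal condition. The cleanest route is the transposition to $\ssp$ via $i^*$, which lets us import the corresponding fact from~\cite{moser2024} rather than re-establishing by hand that every colour extends to a walking isomorphism at the level of full mapping spaces.
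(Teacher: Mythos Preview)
Your proof is correct, and part~(ii) is handled exactly as in the paper. For part~(i), however, you take a more roundabout path than necessary. The paper observes simply that the endpoint inclusion $q\colon i_!N[0]\to i_!NI[1]$ admits a retraction (namely the map induced by the unique functor $I[1]\to [0]$), and therefore \emph{every} object of $\dsp$ has the right lifting property against $q$: given $f\colon i_!N[0]\to X$, the composite $i_!NI[1]\to i_!N[0]\xrightarrow{f} X$ is a lift. No Segal condition, no adjunction transposition, and no appeal to~\cite{moser2024} is needed. Your argument via $i^*$ is valid and ultimately rests on the same retraction (your ``identity equivalence'' is exactly the image of that retraction), but it obscures the elementary reason and gives the misleading impression that the Segal hypothesis plays a role in lifting against $q$, when in fact it does not.
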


\begin{proof}
    For (i), one implication is immediate; for the other, note that the map $i_!N[0]\to i_!NI[1]$ admits a retract, hence lifts on the left against every map $X\to *$, and thus imposes no additional fibrancy condition beyond those in $\mathcal{J}_0$.

    For (2), apply the small object argument to the map $X\to *$ using the generating set $\mathcal{J}_0$.  
    This yields a factorization
    $$
        X\xrightarrow{r_X} \widetilde{X}\longrightarrow *
    $$
    in which $r_X$ is a $\mathcal{J}_0$-cofibration and $\widetilde{X}$ is $\mathcal{J}_0$-fibrant, hence $\mathcal{J}$-fibrant by (i).
\end{proof}

\begin{prop}\label{pr:J0_trivial_at_eta}
    Let $f\colon X\to Y$ be a $\mathcal{J}_0$-cofibration. Then the induced map
    $$
        f_\eta\colon X_\eta\longrightarrow Y_\eta
    $$
    is a trivial cofibration of simplicial sets.
\end{prop}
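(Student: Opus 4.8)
The plan is to evaluate at the trivial tree and argue by saturation. Write $(-)_\eta\colon\dsp\to\sset$ for the functor $X\mapsto X_\eta=\Map{\eta}{X}$, i.e.\ restriction of the horizontal coordinate to $\eta$. Since colimits in both presheaf categories are computed pointwise, $(-)_\eta$ preserves all colimits, and of course it preserves retracts. Consequently it carries the class of $\mathcal{J}_0$-cofibrations, which is the saturation of $\mathcal{J}_0$, into the saturation of the set $\{g_\eta : g\in\mathcal{J}_0\}$ inside $\sset$. Because the trivial cofibrations of simplicial sets form a saturated class (being the left class of the Kan--Quillen weak factorization system), it therefore suffices to check that $g_\eta$ is a trivial cofibration of simplicial sets for each generator $g\in\mathcal{J}_0$.

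The computation rests on the identity $(X\cdot S)_\eta\cong X_\eta\times S$, which holds because $i_v(S)$ is horizontally constant. Under this identification, evaluating the pushout product $u\boxdot v$ of a map $u$ of dendroidal sets against a map $v$ of simplicial sets at $\eta$ produces the cartesian pushout product $u_\eta\boxc v$ in $\sset$, as $(-)_\eta$ preserves the defining pushout and sends $\cdot$ to $\times$. Thus a type~(i) generator evaluates to $(\partial T\to T)_\eta\boxc(\horn{k}{n}\to\std{n})$, and a type~(ii) generator evaluates to $(\spn{T}\to T)_\eta\boxc(\partial\std{n}\to\std{n})$.

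The crux is the combinatorial fact, already recorded in the proof of Proposition~\ref{pr:gamma_*X_Segal} and in Remark~\ref{rm:alt_fullyfaithful}, that $(\partial T)_\eta=T_\eta$ for every tree $T\neq\eta$ and $(\spn{T})_\eta=T_\eta$ for every tree $T$ with at least two vertices; that is, the boundary and spine inclusions are bijective on edges. Hence $(\partial T\to T)_\eta$ and $(\spn{T}\to T)_\eta$ are \emph{isomorphisms} of simplicial sets whenever $T\neq\eta$, and the pushout product of an isomorphism with any map is again an isomorphism. This settles all type~(ii) generators and all type~(i) generators with $T\neq\eta$. The one remaining case is a type~(i) generator with $T=\eta$: there $(\partial\eta)_\eta=\emptyset$ while $\eta_\eta$ is a point, so $g_\eta$ is the pushout product $(\emptyset\to\std{0})\boxc(\horn{k}{n}\to\std{n})$, which reduces to the horn inclusion $\horn{k}{n}\to\std{n}$. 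In every case $g_\eta$ is a trivial cofibration, and the conclusion follows. There is no real obstacle beyond the two edge identities above: once these are in hand the argument is purely formal, the only subtlety being to isolate the degenerate $T=\eta$ case of type~(i), which is precisely what contributes the non-invertible (horn) trivial cofibrations rather than mere isomorphisms.
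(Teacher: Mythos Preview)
Your proof is correct and follows essentially the same approach as the paper's: both argue that the class of maps whose $\eta$-component is a trivial cofibration is saturated and contains the generators in $\mathcal{J}_0$. The paper states this in a single sentence, while you spell out in detail why evaluation at $\eta$ sends each generator to a trivial cofibration (via the identities $(\partial T)_\eta=T_\eta$ and $(\spn{T})_\eta=T_\eta$, and the residual horn inclusion when $T=\eta$).
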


\begin{proof}
    The class of maps in $\dsp$ whose $\eta$-component is a trivial cofibration is saturated and contains all generators in $\mathcal{J}_0$.
\end{proof}

\begin{prop}\label{pr:J-fib_iff_isofib}
    A dendroidal space $X$ is $\mathcal{J}$-fibrant if and only if it is Segal.  
    If $f\colon X\to Y$ is a map between dendroidal Segal spaces, then $f$ is a $\mathcal{J}$-fibration if and only if it is an isofibration.
\end{prop}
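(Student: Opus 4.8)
The statement has two parts, and the plan is to reduce each to known facts about $\mathcal{J}_0$ and to the simplicial case treated in \cite{moser2024} via the functor $i^*$.

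The plan for the first claim is as follows. By Proposition~\ref{pr:J_0_replacement}(i), an object is $\mathcal{J}$-fibrant precisely when it is $\mathcal{J}_0$-fibrant. It has already been observed that a dendroidal space is $\mathcal{J}_0$-fibrant if and only if it is a dendroidal Segal space (this follows by unpacking the lifting property against the maps of type~(i) and~(ii) in $\mathcal{J}_0$: type~(i) forces Reedy fibrancy, as in~\eqref{eq:gen_triv_cof_Reedy}, while type~(ii) forces the Segal condition, as in~\eqref{eq:pop_Segal}). Combining these two equivalences gives the first assertion directly.

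For the second claim, I would argue that a map $f\colon X\to Y$ of dendroidal Segal spaces is a $\mathcal{J}$-fibration if and only if it is (a) a $\mathcal{J}_0$-fibration and (b) has the right lifting property against the endpoint inclusion $q\colon i_!N[0]\to i_!NI[1]$ of type~(iii). For a map between $\mathcal{J}$-fibrant (equivalently Segal) objects, being a $\mathcal{J}_0$-fibration is exactly being a Reedy fibration, since between Segal spaces the $\mathcal{J}_0$-fibrations coincide with the Reedy fibrations (as recorded just before Proposition~\ref{pr:J_0_replacement}). Condition~(b) is precisely the lifting characterization of isofibrations supplied by Proposition~\ref{pr:char_isofib}: a Reedy fibration between dendroidal Segal spaces is an isofibration if and only if it lifts against $q$. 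Thus $f$ is a $\mathcal{J}$-fibration if and only if it is a Reedy fibration lifting against $q$, which is exactly the definition of an isofibration.

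The one point requiring care, and the likely main obstacle, is the claim that for a \emph{map between Segal spaces} the right lifting property against all of $\mathcal{J}$ decomposes cleanly into ``$\mathcal{J}_0$-fibration plus lifting against $q$''. This is immediate from the definition of $\mathcal{J}$ as the union of $\mathcal{J}_0$ with the type~(iii) maps, so no subtlety actually intervenes at the level of generators. The only thing to verify is that Proposition~\ref{pr:char_isofib} genuinely applies: it characterizes isofibrations among \emph{Reedy fibrations between dendroidal Segal spaces}, and we have just arranged that $f$ is exactly such a map. Hence the equivalence follows, completing the proof.
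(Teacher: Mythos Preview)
Your proof is correct and follows essentially the same approach as the paper's: invoke Proposition~\ref{pr:J_0_replacement} for the first claim, then for the second decompose $\mathcal{J}$ as $\mathcal{J}_0$ together with the endpoint inclusion $q$, identify lifting against $\mathcal{J}_0$ (between Segal spaces) with being a Reedy fibration, and use Proposition~\ref{pr:char_isofib} to identify the remaining lifting condition with the isofibration property. Your extra care in noting that the equivalence ``$\mathcal{J}_0$-fibration $\Leftrightarrow$ Reedy fibration'' requires the source and target to be Segal is well placed, though the paper simply records this fact in the paragraph preceding Proposition~\ref{pr:J_0_replacement} and uses it without further comment.
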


\begin{proof}
    The first statement follows immediately from Proposition~\ref{pr:J_0_replacement}.  
    For the second, $f$ is a Reedy fibration precisely when it lifts against $\mathcal{J}_0$, and by Proposition~\ref{pr:char_isofib} it is an isofibration precisely when, in addition, it lifts against
    $$
        i_!N[0]\longrightarrow i_!NI[1].
    $$
    This is exactly the additional generator in $\mathcal{J}$, completing the characterization.
\end{proof}

\subsection{Weak equivalences}

We define $\mathcal{W}_f$ to be the class of Dwyer--Kan equivalences between dendroidal Segal spaces.  
Since equivalences of operads and weak equivalences of simplicial sets both satisfy the 2-out-of-6 property, we obtain:

\begin{prop}
    The class $\mathcal{W}_f$ satisfies the 2-out-of-6 property. $\hfill\qed$
\end{prop}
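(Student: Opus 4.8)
The plan is to write $\mathcal{W}_f$ as an intersection of two classes and to extract the 2-out-of-6 property from the corresponding properties of equivalences of operads and of weak equivalences of simplicial sets. By Definition~\ref{df:DKeq}, and since homotopical full faithfulness implies (after applying $\pi_0$) full faithfulness of the induced map of homotopy operads, a map $f$ of dendroidal Segal spaces is a Dwyer--Kan equivalence if and only if it lies in both of the following classes: the class $A$ of maps with $\ho f$ an equivalence of operads, and the class $B$ of homotopically fully faithful maps. Thus $\mathcal{W}_f=A\cap B$. The class $A$ will be seen to satisfy 2-out-of-6 on its own; the subtlety lies entirely in the contribution of $B$, where the essential surjectivity extracted from $A$ must be fed back in.

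First I would dispatch $A$. The assignment $X\mapsto \ho X$ is functorial, and equivalences of operads are precisely the isomorphisms in the localization $\mathrm{Ho}(\op)$ of $\op$ at the equivalences; hence $A$ is the preimage of the isomorphisms under the functor $\dspsegal\to\mathrm{Ho}(\op)$. Since isomorphisms satisfy 2-out-of-6 and a functor preserves the relevant composites, $A$ satisfies 2-out-of-6. Concretely, given composable $f,g,h$ with $gf,hg\in\mathcal{W}_f\subset A$, all of $\ho f,\ho g,\ho h,\ho(hgf)$ are equivalences of operads; in particular each of $f,g,h,hgf$ is essentially surjective, so condition~(ii) of Definition~\ref{df:DKeq} holds for all four maps.

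It remains to establish homotopical full faithfulness for $f,g,h$ and $hgf$. Fixing colours $x_1,\dots,x_k,y\in X_{\eta,0}$, the induced maps on multi-mapping spaces assemble into a composable chain
$$
\mapd{X}{x_1,\dots,x_k}{y}\to \mapd{Y}{f(x_1),\dots,f(x_k)}{f(y)}\to \mapd{Z}{gf(x_1),\dots,gf(x_k)}{gf(y)}\to \mapd{W}{hgf(x_1),\dots,hgf(x_k)}{hgf(y)}
$$
whose two length-two subcomposites are the maps induced by $gf$ and $hg$, hence weak equivalences of simplicial sets. As weak equivalences of simplicial sets satisfy 2-out-of-6, every map in the chain is a weak equivalence. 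Letting the tuple range over all colours of $X$ yields homotopical full faithfulness of $f$ and of $hgf$ outright, as well as of $g$ (resp.\ $h$) at every tuple of colours lying in the image of $f$ (resp.\ of $gf$).

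The hard part is to promote homotopical full faithfulness of $g$ and $h$ from image colours to arbitrary colours of the target. For this I would invoke the essential surjectivity already obtained: since $\ho f$ and $\ho g$ are essentially surjective, each colour of $Y$ (resp.\ $Z$) is isomorphic in the homotopy operad to one in the image of $f$ (resp., composing with $\ho f$, in the image of $gf$). It then remains to transport the weak equivalences along such isomorphisms, which is exactly the homotopy invariance of multi-mapping spaces under isomorphisms of colours in a dendroidal Segal space---a consequence of the associative and unital composition~\eqref{eq:comp_segal}, and natural in the map under consideration. This supplies condition~(i) for $g$ and $h$, and hence shows $f,g,h,hgf\in\mathcal{W}_f$, completing the proof.
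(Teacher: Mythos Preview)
Your argument is correct and follows the same underlying approach the paper gestures at in its one-line proof: reduce to the 2-out-of-6 property for equivalences of operads and for weak equivalences of simplicial sets. You are considerably more careful than the paper, which sweeps under the rug precisely the point you flag as the ``hard part'': that 2-out-of-6 on multi-mapping spaces only directly yields homotopical full faithfulness of $g$ and $h$ on colours in the image, and one must then use the essential surjectivity already obtained (together with invariance of multi-mapping spaces under isomorphisms of colours) to extend to all colours.
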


Thus condition~\ref{condI} of Theorem~\ref{th:fibrantly-induced} holds.  
We next verify condition~\ref{condII}:

\begin{prop}
    There exists a class $\overline{\mathcal{W}}$ of morphisms in $\dsp$ such that:
    \begin{itemize}
        \item[{\rm (i)}] The restriction of $\overline{\mathcal{W}}$ to the morphisms between dendroidal Segal spaces is exactly $\mathcal{W}_f$.
        \item[{\rm (ii)}] Regarded as a full subcategory of $(\dsp)^{[1]}$, the class $\overline{\mathcal{W}}$ is accessible.
    \end{itemize}
\end{prop}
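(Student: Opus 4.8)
The plan is to define $\overline{\mathcal{W}}$ by means of a functorial fibrant replacement and then to establish its accessibility by exhibiting it as an inverse image, along accessible functors, of accessibly defined classes. Fix a functorial $\mathcal{J}$-fibrant replacement $R\colon\dsp\to\dsp$ together with its natural transformation $r\colon\id{\dsp}\Rightarrow R$, obtained from the small object argument applied to $\mathcal{J}_0$ as in Proposition~\ref{pr:J_0_replacement}; by Proposition~\ref{pr:J-fib_iff_isofib} each $RX$ is a dendroidal Segal space. We set
$$
    \overline{\mathcal{W}}=\{\,f\colon A\to B \mid Rf\colon RA\to RB \text{ is a Dwyer--Kan equivalence}\,\}.
$$

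First I would check condition (i), that $\overline{\mathcal{W}}$ restricts to $\mathcal{W}_f$ on maps between Segal spaces. If $X$ and $Y$ are Segal and $f\colon X\to Y$ is any map, then the replacement maps $r_X\colon X\to RX$ and $r_Y\colon Y\to RY$ are $\mathcal{J}_0$-cofibrations between Segal spaces, hence trivial cofibrations in $\dsp_{\mathrm{Segal}}$; being weak equivalences between fibrant objects they are Reedy weak equivalences, and therefore Dwyer--Kan equivalences. Since $\mathcal{W}_f$ satisfies $2$-out-of-$3$ (a consequence of $2$-out-of-$6$), the naturality square $r_Y\circ f=Rf\circ r_X$ shows that $f\in\mathcal{W}_f$ if and only if $Rf\in\mathcal{W}_f$, which is precisely the assertion $f\in\overline{\mathcal{W}}$.

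For condition (ii) I would proceed in three steps. The functor $R$ is accessible, since a fibrant replacement produced by the small object argument on a locally presentable category always is, and hence its arrowwise extension $R^{[1]}\colon(\dsp)^{[1]}\to(\dsp)^{[1]}$ is accessible as well. It therefore suffices to prove that the class $\mathcal{DK}\subset(\dsp)^{[1]}$ of Dwyer--Kan equivalences between dendroidal Segal spaces is an accessible, accessibly embedded full subcategory, for then $\overline{\mathcal{W}}=(R^{[1]})^{-1}(\mathcal{DK})$ is accessible, being the (iso-comma) inverse image of an accessible subcategory along an accessible functor. The condition that both endpoints be Segal cuts out an accessible subcategory of $(\dsp)^{[1]}$, because Segal spaces are the objects injective with respect to the set $\mathcal{J}$ and small-injectivity classes are accessible and accessibly embedded. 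Within this subcategory, homotopical full faithfulness is, by Proposition~\ref{pr:char_fullyfaithful}(iv), the requirement that for every tree $T$ with at least one vertex the comparison map $X_T\to Y_T\times_{Y_{\partial T}}X_{\partial T}$ be a weak equivalence of simplicial sets; each such requirement is the inverse image of the (standardly accessible) class of weak equivalences in $\sset^{[1]}$ along the accessible functor $(X\to Y)\mapsto\bigl(X_T\to Y_T\times_{Y_{\partial T}}X_{\partial T}\bigr)$, and since there is only a small set of isomorphism classes of trees, the intersection of these conditions is again accessible.

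It remains to treat essential surjectivity, which I expect to be the main obstacle, as it is the one condition not expressible as a weak equivalence of simplicial sets. Here I would reduce to the simplicial case: a map of operads is essentially surjective precisely when its underlying map of colour categories is, and by Corollary~\ref{co:comut_j*_ho} this underlying functor is $\ho(i^*f)$. Thus $\ho X\to\ho Y$ is essentially surjective if and only if $\ho(i^*X)\to\ho(i^*Y)$ is, so that essential surjectivity is the inverse image, along the accessible functor $i^{*[1]}\colon(\dsp)^{[1]}\to(\ssp)^{[1]}$, of the corresponding class of essentially surjective maps between simplicial Segal spaces, whose accessibility is established in~\cite{moser2024}. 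Intersecting the accessible subcategory cut out by full faithfulness with this one yields $\mathcal{DK}$, completing the argument. The only genuinely dendroidal input is the full-faithfulness characterization of Proposition~\ref{pr:char_fullyfaithful}; everything else is formal accessibility bookkeeping together with the reduction of essential surjectivity to~\cite{moser2024}.
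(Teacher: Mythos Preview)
Your argument is correct but proceeds along a different route from the paper. The paper defines $\overline{\mathcal{W}}$ directly, without invoking a fibrant replacement: it constructs a single accessible functor
\[
\Phi\colon(\dsp)^{[1]}\longrightarrow(\op)^{[1]}\times\prod_{k\in\mathbb{N}}(\sset)^{[1]}
\]
sending $f\colon X\to Y$ to $\tau_d(\pi_0)_*\gamma_*(f)$ together with the family of comparison maps $\bigl(X_{C_k}\to X_{\partial C_k}\times_{Y_{\partial C_k}}Y_{C_k}\bigr)_{k\in\mathbb{N}}$, and sets $\overline{\mathcal{W}}=\Phi^{-1}(\mathcal{W}')$, where $\mathcal{W}'$ is the class of weak equivalences in the product of the folk model structure on $\op$ with countably many copies of Kan--Quillen. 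Accessibility then follows from a single application of the preimage lemma (\cite[Corollary~A.2.6.5]{lurie2008highertopostheory}), and condition~(i) from Corollary~\ref{co:description_ho} and Proposition~\ref{pr:char_fullyfaithful}(ii). Your approach instead factors through the replacement $R$ and decomposes the Dwyer--Kan condition into its constituent pieces, handling accessibility of each separately and reducing essential surjectivity to the simplicial case via~\cite{moser2024}. The paper's route is more self-contained---essential surjectivity is absorbed into the combinatorial model structure on $\op$, so no external input from~\cite{moser2024} is needed---and it sidesteps the bookkeeping of iterated intersections of accessible subcategories. Your route has the conceptual virtue that its $\overline{\mathcal{W}}$ coincides by construction with the class $\mathcal{W}$ of Theorem~\ref{th:fibrantly-induced}. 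One minor caution: your citation of~\cite{moser2024} for the accessibility of essentially surjective maps is slightly loose, since that paper presumably also argues via a single preimage rather than isolating this piece; the claim is easily recovered, however, either by imitating their argument or by noting that equivalences in $\cat$ form an accessible class.
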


\begin{proof}
    Consider the functor
    $$
        \Phi\colon (\dsp)^{[1]}\longrightarrow (\op)^{[1]}\times \prod_{k\in\mathbb{N}} (\sset)^{[1]}
    $$
    sending a map $X\to Y$ to the tuple consisting of
    $$
        \tau_d(\pi_0)_*\gamma_* X\;\longrightarrow\; \tau_d(\pi_0)_*\gamma_* Y
    $$
    together with the family of maps
    $$
        \bigl(X_{C_k}\longrightarrow X_{\partial C_k}\times_{Y_{\partial C_k}} Y_{C_k}\bigr)_{k\in\mathbb{N}}.
    $$
    Define $\overline{\mathcal{W}}$ to be the preimage under $\Phi$ of the class $\mathcal{W}'$ of pairs consisting of an equivalence of operads and a tuple of weak equivalences in $\sset$ for all $k\in\mathbb{N}$.  
    By Propositions~\ref{pr:description_ho} and~\ref{pr:char_fullyfaithful}, the restriction of $\overline{\mathcal{W}}$ to Segal spaces is precisely $\mathcal{W}_f$. This shows \rm(i). Since $\mathcal{W}'$ forms the class of weak equivalences of a combinatorial model category, the full subcategory on this class is accessible, and due to $\Phi$ preserving filtered colimits, so is its preimage by \cite[Corollary~A.2.6.5]{lurie2008highertopostheory}, which shows \rm(ii).
\end{proof}

\begin{prop}\label{pr:fact_J-fib}
    Let $X$ be a dendroidal Segal space.  
    In the factorization
    $$
        X\xrightarrow{w} \expd{X}{i_!NI[1]}\xrightarrow{p} X\times X
    $$
    obtained by applying $\expd{X}{i_!-}$ to the factorization
    $$
        N[0]\amalg N[0]\;\longrightarrow\; NI[1]\;\longrightarrow\; N[0],
    $$
    the map $w$ is a Dwyer--Kan equivalence and $p$ is an isofibration.
\end{prop}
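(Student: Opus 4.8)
The plan is to exploit the two endpoint inclusions and the projection of the interval separately. Write $P:=\expd{X}{i_!NI[1]}$; this is a dendroidal Segal space by Proposition~\ref{pr:Segal_tensor}, and $X\times X$ is Segal as well, the Segal condition being a limit condition preserved by products. Applying $\expd{X}{i_!-}$ to the two endpoint inclusions $e_0,e_1\colon N[0]\to NI[1]$ and to the projection $\pi\colon NI[1]\to N[0]$ produces maps $s_0,s_1\colon P\to X$ and $w\colon X\to P$ with $s_i\circ w=\id{X}$ (because $\pi\circ e_i=\id{N[0]}$) and $p=(s_0,s_1)$, so that $p\circ w$ is the diagonal, as required.

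For the fibration statement I would apply Proposition~\ref{pr:exp_isofib} to the map $X\to *$, which is an isofibration since $X$ is Reedy fibrant and $j^*\ho X\to j^*\ho *$ is a functor into the terminal category. Taking $Y=*$ in the first map of that proposition, and using $\expd{*}{i_!NI[1]}\cong *$, the comparison map becomes exactly $p\colon P\to *\times_*(X\times X)\cong X\times X$, so $p$ is an isofibration.

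For $w$ I would instead prove that $s_0$ is a Dwyer--Kan equivalence and then conclude by $2$-out-of-$3$ (a consequence of the $2$-out-of-$6$ property of $\mathcal{W}_f$) applied to $s_0\circ w=\id{X}$; note all three maps are between dendroidal Segal spaces. Essential surjectivity of $s_0$ is immediate, since $\ho s_0$ admits the section $\ho w$ and is therefore surjective on colours. For homotopical full faithfulness I would invoke Proposition~\ref{pr:char_fullyfaithful}(ii), which reduces the claim to showing that for every corolla $C_k$ the comparison map $P_{C_k}\to X_{C_k}\times_{X_{\partial C_k}}P_{\partial C_k}$ is a weak equivalence. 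Unwinding the tensor--hom adjunction~\eqref{eq:adj_otimes}, this map is precisely $\Map{\Theta_k}{X}$, where $\Theta_k:=(\partial C_k\to C_k)\boxtimes i_!(N[0]\to NI[1])$; since $X$ is Segal it then suffices to prove that each $\Theta_k$ is a trivial cofibration in $\dspsegal$.

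This last point is the heart of the matter and where genuinely dendroidal input is needed. The map $\Theta_k$ is a normal monomorphism (a pushout product of such, by Proposition~\ref{pr:Segal_tensor}) and, crucially, a short computation shows it is an \emph{isomorphism at $\eta$}: since $(\partial C_k)_\eta=(C_k)_\eta$ (as in Proposition~\ref{pr:gamma_*X_Segal}), one leg of the defining pushout is an isomorphism at $\eta$, and the pushout collapses the colour level. This is essential, because $N[0]\to NI[1]$ is \emph{not} a Segal equivalence, so Proposition~\ref{pr:Segal_tensor} does not apply directly; the saving feature is that the pushout product with $\partial C_k\to C_k$ makes the colour level an isomorphism. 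For $k=1$ the statement is the underlying simplicial assertion of \cite{moser2024}, recovered through $i^*$ and the isomorphism $i^*(\expd{X}{i_!NI[1]})\cong(i^*X)^{NI[1]}$. For $k\neq 1$ I would show that the fibres of $\Map{\Theta_k}{X}$ are contractible by a transport argument: a point of the target consists of a $k$-ary operation of $X$ together with a coherent family of invertible $1$-morphisms on its input and output colours, and the fibre is the space of coherent transports of that operation along these isomorphisms; invertibility of the morphisms of $I[1]$ together with the Segal composition of $X$ makes this space of transports contractible, which is equivalently the statement that $w\circ s_0$ is $i_!NI[1]$-homotopic to $\id{P}$. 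I expect this multi-ary transport contractibility, which has no analogue in the simplicial setting, to be the main obstacle; making it precise should proceed through the combinatorics of the shuffles of $C_k\otimes i_!NI[1]$ relative to $\partial C_k$, in the spirit of the anodyne compatibility results of \cite{Heuts2022}.
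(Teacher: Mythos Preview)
Your treatment of $p$ coincides with the paper's. For $w$, the reduction to $s_0$ via $2$-out-of-$3$ is sound, as is the identification of the relevant comparison map with $\Map{\Theta_k}{X}$ for $\Theta_k=(\partial C_k\to C_k)\boxtimes i_!(N[0]\to NI[1])$; the statement that $\Theta_k$ is a trivial cofibration in $\dspsegal$ is correct and equivalent to what must be shown. The gap is that you do not actually prove it. The ``transport'' picture is a heuristic, and the appeal to shuffles of $C_k\otimes i_!NI[1]$ is only a pointer to where an argument might live; since $NI[1]$ is an infinite simplicial set, this is not the routine anodyne calculation one finds in \cite{Heuts2022}, and you give no indication of how the filtration would be organized.

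The paper bypasses this combinatorics by a different device. Working with $w$ directly, it factors through $\expd{X}{i_!N[1]}$ via the inclusion $\iota\colon N[1]\hookrightarrow NI[1]$, producing a two-square rectangle with rows $X_{C_k}\to(\expd{X}{i_!NI[1]})_{C_k}\to(\expd{X}{i_!N[1]})_{C_k}$ over the corresponding $\partial C_k$ row. The right-hand horizontal maps are homotopy monomorphisms by Rezk's \cite[Theorem~6.2]{Rezk_2001} applied to the simplicial Segal spaces $i^*(\expd{X}{C_k})$ and $i^*(\expd{X}{\partial C_k})$; the outer rectangle is a homotopy pullback by the Segal condition on the two-vertex trees $C_1\circ C_k$ and $C_k\circ(C_1,\ldots,C_1)$. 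The left square, which encodes full faithfulness of $w$ at $C_k$, then follows by cancellation on fibres. This argument is uniform in $k$: your separation into $k=1$ (deferred to \cite{moser2024}) and $k\neq 1$ (shuffles) is artificial, since the $k=1$ case in \cite{moser2024} \emph{is} Rezk's argument, and the paper's point is precisely that it extends verbatim to all corollas.
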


\begin{proof}
    Setting $Y=*$ in Proposition~\ref{pr:exp_isofib} immediately shows that $p$ is an isofibration.

    To see that $w$ is a Dwyer--Kan equivalence, we adapt Rezk’s original argument from \cite[Lemma~13.9]{Rezk_2001}.  
    Using the identification
    $$
        i^*(\expd{X}{i_!-})\cong (i^*X)^{-}
    $$
    and the fact that $i^*X$ is a simplicial Segal space, Rezk’s proof shows that $i^*w$ is a Dwyer--Kan equivalence. It therefore remains only to verify homotopical fully faithfulness for corollas $C_k$ with $k\neq 1$.

    The argument Rezk gives for the case $k=1$ applies verbatim for all $k$. Consider the diagram
    $$
        \begin{tikzcd}
            X_{C_k} \arrow[r, "w_{C_k}"] \arrow[d] &
            (\expd{X}{i_!NI[1]})_{C_k} \arrow[r, "\iota^*"] \arrow[d] &
            (\expd{X}{i_!N[1]})_{C_k} \arrow[d] \\
            X_{\partial C_k} \arrow[r, "w_{\partial C_k}"] &
            (\expd{X}{i_!NI[1]})_{\partial C_k} \arrow[r, "\iota^*"] &
            (\expd{X}{i_!N[1]})_{\partial C_k}.
        \end{tikzcd}
    $$
    By Proposition~\ref{pr:char_fullyfaithful}, it suffices to show that the left square is a homotopy pullback.  
    The top horizontal map in the right square can be identified, via simplicial adjunction, with
    $$
        \Map{NI[1]}{i^*(\expd{X}{C_k})}\longrightarrow
        \Map{N[1]}{i^*(\expd{X}{C_k})},
    $$
    and similarly for the bottom map with $\partial C_k$, which are homotopy monomorphisms by \cite[Theorem~6.2]{Rezk_2001}, since $i^*(\expd{X}{C_k})$ is a simplicial Segal space by Proposition~\ref{pr:Segal_tensor}.

    The outer rectangle is isomorphic to
    $$
        \begin{tikzcd}
            X_{C_k} \arrow[r] \arrow[d] &
            X_{C_1\circ C_k}\times_{X_{C_k}}
            X_{C_k\circ(C_1,\ldots,C_1)} \arrow[d] \\
            \prod_{\partial C_k} X_\eta \arrow[r] &
            \prod_{\partial C_k} X_{C_1},
        \end{tikzcd}
    $$    
    which is a homotopy pullback: both inner squares 
    $$
        \begin{tikzcd}
             X_{C_k} \arrow[r] \arrow[d] & X_{C_1\circ C_k} \arrow[d] &&&  X_{C_k} \arrow[r] \arrow[d] & X_{C_k\circ (C_1,\ldots, C_1)} \arrow[d]\\
             X_{\eta} \arrow[r] & X_{C_1} &&& \prod_{k}X_{\eta} \arrow[r] & \prod_{k}X_{C_1}
        \end{tikzcd}
    $$
    are homotopy pullbacks by the Segal condition.  
    Thus $\iota^*\circ w_{C_k}$ is a weak equivalence on fibers, and since $\iota^*$ is a homotopy monomorphism on fibers, $w_{C_k}$ is a weak equivalence on fibers as well.
\end{proof}

We have now verified condition~\ref{condIII}.  
We next verify condition~\ref{condIV} by proving that (ii) implies (i) in the following characterization:

\begin{prop}\label{pr:char_I-fib_bw_Segal}
    Let $f\colon X\to Y$ be a map of dendroidal Segal spaces.  
    The following are equivalent:
    \begin{enumerate}
        \item[{\rm (i)}] \label{it:I_fib} $f$ is an $\mathcal{I}$-fibration.
        \item[{\rm (ii)}] \label{it:isofib_&DKeq} $f$ is an isofibration and a Dwyer–Kan equivalence.
        \item[{\rm (iii)}] $f$ is a Reedy fibration, homotopically fully faithful, and the induced map $X_{\eta,0}\to Y_{\eta,0}$ is surjective.
    \end{enumerate}
\end{prop}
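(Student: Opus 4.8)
The plan is to prove the two equivalences (i)$\Leftrightarrow$(iii) and (ii)$\Leftrightarrow$(iii) separately, since both $\mathcal I$-fibrations and homotopically fully faithful maps have already been described in terms of the relative matching maps $X_T\to Y_T\times_{Y_{\partial T}}X_{\partial T}$.

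For (i)$\Leftrightarrow$(iii) I would unpack the $\mathcal I$-fibration condition via Proposition~\ref{pr:char_I-fibs}: the map $p$ is an $\mathcal I$-fibration exactly when $p_\eta$ is a surjective Kan fibration and, for every $T\neq\eta$, the relative matching map is a trivial Kan fibration. These matching maps are precisely the ones appearing both in the Reedy fibration condition and in Proposition~\ref{pr:char_fullyfaithful}(iv). For a tree with at least one vertex, factoring ``trivial Kan fibration'' as ``Kan fibration'' (the Reedy condition) together with ``weak equivalence'' (homotopical full faithfulness) gives the equivalence of the two descriptions away from $\eta$. At $\eta$ the matching map is simply $p_\eta$, which the Reedy condition forces to be a Kan fibration; the only extra point is that a Kan fibration which is surjective on vertices is automatically surjective, since the inclusion of any vertex $\std{0}\hookrightarrow\std{n}$ is anodyne and can therefore be lifted. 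Hence ``$p_\eta$ a surjective Kan fibration'' is equivalent to ``$p_\eta$ a Kan fibration and $X_{\eta,0}\to Y_{\eta,0}$ surjective'', matching (iii).

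For (ii)$\Leftrightarrow$(iii) I would first note that both conditions contain the Reedy fibration requirement and the homotopical full faithfulness of $f$ (the latter being condition~(i) of Definition~\ref{df:DKeq}). The remaining task is to match the surjectivity of $X_{\eta,0}\to Y_{\eta,0}$ with the pair consisting of ``$j^*\ho X\to j^*\ho Y$ an isofibration of categories'' and ``$\ho X\to\ho Y$ essentially surjective''. Here I would observe that homotopical full faithfulness makes $j^*\ho X\to j^*\ho Y$ fully faithful, by applying $\pi_0$ to the unary ($k=1$) multi-mapping spaces, and that essential surjectivity of the operad map coincides with essential surjectivity of the underlying functor. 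The whole equivalence then reduces to the elementary categorical fact that a fully faithful functor is surjective on objects if and only if it is both essentially surjective and an isofibration: in one direction, lift a chosen isomorphism onto a preimage; in the other, use that a fully faithful functor reflects isomorphisms. Since the objects of $j^*\ho X$ are exactly the vertices $X_{\eta,0}$, this is the translation required.

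The argument is largely a repackaging of Propositions~\ref{pr:char_I-fibs} and~\ref{pr:char_fullyfaithful}, so I do not anticipate a serious obstacle. The most delicate point is the categorical bookkeeping in (ii)$\Leftrightarrow$(iii): one must verify carefully that homotopical full faithfulness genuinely yields a fully faithful functor on homotopy categories (hence one that reflects isomorphisms) and that the two notions of essential surjectivity agree, after which the elementary lemma on fully faithful functors closes the argument.
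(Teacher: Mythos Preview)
Your proposal is correct and follows essentially the same route as the paper. The paper also derives (i)$\Leftrightarrow$(iii) directly from Propositions~\ref{pr:char_I-fibs} and~\ref{pr:char_fullyfaithful}, and handles (ii)$\Leftrightarrow$(iii) by noting that, in the presence of homotopical full faithfulness, the pair ``isofibration $+$ essentially surjective'' is equivalent to surjectivity of $X_{\eta,0}\to Y_{\eta,0}$; you simply spell out the underlying categorical lemma (a fully faithful functor is surjective on objects iff it is essentially surjective and an isofibration) more explicitly than the paper does.
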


\begin{proof}
    Proposition~\ref{pr:char_I-fibs} together with Proposition~\ref{pr:char_fullyfaithful} shows that condition~(i) is equivalent to condition~(iii).  
    For any Reedy fibration between Segal spaces, being an isofibration and a Dwyer--Kan equivalence is equivalent to being homotopically fully faithful and inducing an equivalence of operads that is surjective on objects.  
    This in turn is equivalent to homotopical fully faithfulness together with surjectivity of $f_\eta$ on vertices.
\end{proof}

Finally, to verify condition~\ref{condV}, we follow the strategy of \cite{moser2024}.  
We begin with a technical result generalizing \cite[Proposition~3.18]{moser2024}.

\begin{prop}\label{or:oullback_cosk_left_Quillen}
    Let $p\colon K\to L$ be a surjective Kan fibration between Kan complexes.  
    Then the functor
    $$
        \cosk_\eta(p)^*\colon {\dspsegal}_{/\cosk_\eta L}\longrightarrow {\dspsegal}_{/\cosk_\eta K}
    $$
    is left Quillen.
\end{prop}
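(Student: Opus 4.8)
The plan is to verify that $F:=\cosk_\eta(p)^*$ preserves cofibrations and trivial cofibrations; since $\dsp$ is a presheaf topos, hence locally cartesian closed, $F$ is a left adjoint (its right adjoint being the dependent product $\cosk_\eta(p)_*$), and this suffices for left Quillenness. Preservation of cofibrations is formal and uses neither the fibrancy nor the surjectivity of $p$: in both slices the cofibrations are the normal monomorphisms, and for a normal monomorphism $u\colon A\hookrightarrow B$ in $\dsp$ and any $h\colon Z\to B$ the pullback $Z\times_B A\hookrightarrow Z$ is again normal, because at each tree $T$ its complement is the $h$-preimage of the complement of $u$, on which $\mathrm{Aut}(T)$ acts freely, and $h$ is $\mathrm{Aut}(T)$-equivariant. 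Applying this with $Z=B\times_{\cosk_\eta L}\cosk_\eta K$ and $h$ the projection shows that $F$ carries a slice cofibration $A\to B$ to a normal monomorphism.

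For the trivial cofibrations I would first record the structural fact that $\cosk_\eta(p)\colon\cosk_\eta K\to\cosk_\eta L$ is a Reedy fibration between dendroidal Segal spaces. Using $(\partial T)_\eta=T_\eta=\spn{T}_\eta$ for $T\neq\eta$, one computes $(\cosk_\eta S)_T\cong(\cosk_\eta S)_{\partial T}\cong S^{T_\eta}$, so the relative matching maps of $\cosk_\eta(p)$ are isomorphisms for $T\neq\eta$ and equal the Kan fibration $p=(\cosk_\eta(p))_\eta$ at $\eta$; the same computation makes the Segal maps of $\cosk_\eta K$ and $\cosk_\eta L$ isomorphisms, so both are Segal spaces (here $K,L$ Kan is used). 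Now the Reedy model structure is right proper, being built over the right proper $\sset$ (and Reedy fibrations are objectwise fibrations), so for a Reedy weak equivalence $A\to B$ over $\cosk_\eta L$ the map $F(A\to B)$ is the base change of $A\to B$ along the Reedy fibration $B\times_{\cosk_\eta L}\cosk_\eta K\to B$, hence again a Reedy weak equivalence. Thus $F$ preserves Reedy weak equivalences, and together with the first paragraph it is left Quillen for the \emph{Reedy} slice structures; composing with the localization gives a left Quillen functor $(\dsp_R)_{/\cosk_\eta L}\to(\dspsegal)_{/\cosk_\eta K}$. By the universal property of left Bousfield localization, together with the compatibility of slicing with localization at the spines (the slice $(\dspsegal)_{/\cosk_\eta L}$ is the localization of $(\dsp_R)_{/\cosk_\eta L}$ at the sliced spine inclusions), $F$ descends to a left Quillen functor $(\dspsegal)_{/\cosk_\eta L}\to(\dspsegal)_{/\cosk_\eta K}$ exactly when it sends each localizing map to a weak equivalence in $\dspsegal$.

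The main obstacle is this last point: for every tree $T$ and every structure map $T\to\cosk_\eta L$—which, since $T_\eta$ is a discrete set, is just a vertex of $L$ attached to each edge of $T$—the map $\spn{T}\times_{\cosk_\eta L}\cosk_\eta K\to T\times_{\cosk_\eta L}\cosk_\eta K$ must be a weak equivalence in $\dspsegal$. I would argue by induction on the number of vertices of $T$. The base cases $T=\eta$ and $T=C_k$ are trivial, since then $\spn{T}=T$ and the map is the identity. For the inductive step, write $T=T'\circ_e C_k$ as a grafting, exhibiting $T=T'\amalg_\eta C_k$ and $\spn{T}=\spn{T'}\amalg_\eta C_k$ as pushouts along the shared edge; as $F$ preserves pushouts and cofibrations, applying it yields a map of pushout squares whose only non-identity leg is $F(\spn{T'}\to T')$, a weak equivalence by the inductive hypothesis and a cofibration by the first paragraph. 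The gluing lemma in the left proper model category $\dspsegal$ (left properness being inherited from the Reedy structure under localization) then forces the induced map on pushouts to be a weak equivalence, closing the induction. Surjectivity of $p$ enters here by guaranteeing that each fibre $p^{-1}(\ell)$ is non-empty, which is what lets one identify $F(T)$ concretely as $T$ with its edges decorated by these fibres, while the fibration property of $p$ is what makes $\cosk_\eta(p)$ a Reedy fibration and powers the right-properness step. The points that will require the most care are the compatibility of slicing with the Segal localization and the verification that the grafting decompositions of $\spn{T}$ and $T$ are faithfully transported through $F$.
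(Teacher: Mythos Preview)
Your inductive step rests on the identity $T = T'\amalg_\eta C_k$ in $\dset$, and this is false. Grafting a corolla onto a tree is \emph{not} a pushout of representable dendroidal sets: if it were, iterating would give $T=\spn{T}$ for every tree, contradicting the very existence of a nontrivial spine inclusion. Concretely, for the linear tree $T=C_1\circ_m C_1$ with edges $r,m,\ell$, the set $T_{C_1}$ has six elements while $(C_1\amalg_\eta C_1)_{C_1}$ has only five: the ``composite'' $1$-cell $(r,\ell)$ obtained by contracting $m$ is genuinely new and does not arise from either corolla. Thus the cube you feed into the gluing lemma does not have $F(T)$ as its pushout, and the argument collapses.

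This is also why you cannot locate where surjectivity of $p$ is used: your argument, as written, never uses it, and indeed the conclusion fails without it. With $K=\{0\}\hookrightarrow L=\{0,1\}$ and the edge $m$ labelled $1$ in the example above, one computes $F(\spn{T})\cong\eta\amalg\eta$ while $F(T)$ has a nondegenerate $C_1$-cell (the composite $(r,\ell)$), so $F(\spn{T})\to F(T)$ is \emph{not} a Segal equivalence. The paper's proof proceeds quite differently: it works with inner horn inclusions $\Lambda^e[T]\hookrightarrow T$ rather than spines, first treats the case where $K$ and $L$ are discrete by describing $F(T)$ explicitly as the nerve of an operad with colours decorated by fibres of $p$, and then uses a combinatorial filtration (pairing ``bounded'' dendrices with ``bounding'' ones via a chosen pivot $x\in K_e$, which is where surjectivity enters) to exhibit $F(\Lambda^e[T])\hookrightarrow F(T)$ as a transfinite composite of inner horn pushouts. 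The general case is then deduced by applying the discrete case to each $K_n\to L_n$ and taking the diagonal of the resulting simplicial object, using that geometric realization preserves levelwise Segal equivalences between Reedy cofibrant objects.
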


\begin{proof}
    Pullbacks always preserve monomorphisms, and by \cite[Proposition~3.28]{Heuts2022}, they also preserve normal monomorphisms of dendroidal sets.  
    Since pullbacks can be computed termwise, either first in $\Delta$ and then in $\Omega$, or vice versa, they preserve normal monomorphisms of dendroidal spaces as well (these are precisely the termwise normal monos in $\Delta$). The same argument shows that pullbacks preserve termwise weak equivalences, using right properness of $\sset$.  
    Hence
    $$
        \cosk_\eta(p)^*\colon {\dsp_{\mathrm{Reedy}}}_{/\cosk_\eta L}\longrightarrow {\dspsegal}_{/\cosk_\eta K}
    $$
    is a left Quillen functor.

    To conclude, \cite[Proposition~3.3.18]{Hirschhorn2009} reduces the problem to showing that for every tree $T$ and every inner edge $e\colon\eta\to T$, the maps
    $$
        \begin{tikzcd}
            \Lambda^e[T] \arrow[rd] \arrow[rr, hook] && T \arrow[ld] \\
            & \cosk_\eta L
        \end{tikzcd}
    $$
    in $\dsp_{/\cosk_\eta L}$ are sent to trivial cofibrations in ${\dspsegal}_{/\cosk_\eta K}$.
    Consider the diagram
    $$
        \begin{tikzcd}
            Q & P & \cosk_\eta K \\
            \Lambda^e[T] & T & \cosk_\eta L.
            \arrow[from=1-1, to=1-2]
            \arrow[from=1-1, to=2-1]
            \arrow["\lrcorner"{anchor=center, pos=0.125}, draw=none, from=1-1, to=2-2]
            \arrow[from=1-2, to=1-3]
            \arrow[from=1-2, to=2-2]
            \arrow["\lrcorner"{anchor=center, pos=0.125}, draw=none, from=1-2, to=2-3]
            \arrow["\cosk_\eta(p)", from=1-3, to=2-3]
            \arrow[from=2-1, to=2-2]
            \arrow[from=2-2, to=2-3]
        \end{tikzcd}
    $$

    \emph{Case 1: $K$ and $L$ discrete}. Then $\cosk_\eta K \cong N_d(\mathrm{Comm}[K])$, the dendroidal nerve of the operad with set of colours $K$ and a unique operation on every possible multi-hom set, and similarly for $L$. Moreover, the map $\cosk_\eta(p)$ is the nerve of the unique map of operads which equals $p$ on colours.
 
    For each edge $\alpha:\eta\to T$ of the tree $T$, define $K_\alpha$ to be the fiber in $K$ over the image of $\alpha$ in $L$.
    Then, $P$ is itself isomorphic to the nerve of an operad, whose set of colours is $\coprod_{\alpha:\eta\to T}K_\alpha$ and whose operations are given by subtrees $T'\subset T$ with the root and each leaf decorated by an element of $K_\alpha$ for the corresponding $\alpha:\eta\to T$. These decorations give the target and the source respectively. In the case where the subtree $T'=\eta$, we label it twice, once as a leaf and once as a root. Composition is given by grafting and forgetting the intermediate label. The map $P\to T$ sends each $K_\alpha$ to the corresponding $\alpha$ and forgets all labels.

    A general dendrex in $P$ of shape $S$ is given by a map of trees $\beta:S\to T$ and for every edge $\gamma:\eta\to S$ an element of $K_{\beta\gamma}$.
    Such a dendrex is degenerate if and only if there is a unary vertex in $S$ such that its two edges get mapped to the same edge $\alpha:\eta\to T$ and moreover they are labeled by the same element $x\in K_\alpha$. It is not contained in $Q$ if and only the preimage under $\beta$ of each edge in $T$ is non empty, except for possibly that of $e$, that is
    \begin{equation*}
        \text{Edges}(T)\setminus\{e\}\subset \beta(\text{Edges}(S))
    \end{equation*}
    Note that for any $\beta:S\to T$ the preimage under $\beta$ of any edge of $T$ is either empty or a linear subtree of $S$. For a non-degenerate dendrex in $P$ not contained in $Q$ as above, define its \emph{pivot} as the label of the first edge in the linear tree $\beta^{-1}(e)$. Note that such a dendrex might not have a pivot, namely when $\beta^{-1}(e)$ is empty.
    
    Now fix an element $x\in K_e$. We say that a non-degenerate dendrex which is not contained in $Q$ is \emph{bounding} if it has $x$ as a pivot, and \emph{bounded} otherwise.
    Then every bounded dendrex is the inner face of a unique bounding dendrex, namely the one obtained from introducing a new edge at the start of $\beta^{-1}(e)$ with $x$ as a label. Moreover, given a bounding dendrex, all its faces are either smaller dendrices in $Q$ or smaller bounding dendrices, except for the face which deletes the pivot, which gives a bounded dendrex. This, together with the fact that $P$ is normal, means that we can write the inclusion $Q\xhookrightarrow{} P$ as a sequence of pushouts of inner horn inclusions, so that it is a trivial cofibration in ${\dspsegal}_{/\text{cosk}_\eta K}$.

    \emph{Case 2: general $K$ and $L$}.  
    Since \(p\colon K\to L\) is surjective, the discrete argument applies levelwise to the maps $K_n\to L_n$, obtaining a simplicial diagram
    \begin{equation}
        Q_\bullet = \Lambda^e[T]\times_{\cosk_\eta(L_\bullet)} \cosk_\eta(K_\bullet)
        \longrightarrow
        P_\bullet = T\times_{\cosk_\eta(L_\bullet)} \cosk_\eta(K_\bullet)
    \label{eq:simplicial_diagram}
    \end{equation}
    in $\dsp$, each \(Q_n\to P_n\) a weak equivalence in $\dspsegal$.  
    Since all $P_n$ and $Q_n$ are normal discrete dendroidal spaces, the simplicial objects $P_\bullet$ and $Q_\bullet$ are Reedy cofibrant in $(\dsp)^{\Delta^{\mathrm{op}}}$. Indeed, regarding this category as the category of functors $\Delta^{\text{op}}\times \Delta^{\text{op}}\to \dset$, the cofibrations are precisely the monomorphisms which are normal monomorphisms of dendroidal sets at each bisimplicial bidegree. Thus by \cite[Corollary~18.4.13]{Hirschhorn2009}, if we apply the geometric realization
    \begin{equation*}
        (\dsp)^{\Delta^{\text{op}}}\to \dsp
    \end{equation*}
    to \eqref{eq:simplicial_diagram}, we obtain a weak equivalence in $\dspsegal$. But in our case the geometric realization is the diagonal, which gives precisely the map $Q\hookrightarrow P$. 
\end{proof}

\begin{prop}
    Let \(f\colon X\to Y\) be an \(\mathcal{I}\)-fibration.  
    Then \(f\) admits a \(\mathcal{J}\)-fibrant replacement that is a Dwyer--Kan equivalence.
\end{prop}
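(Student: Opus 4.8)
The plan is to replace $f$ by an $\mathcal{I}$-fibration between dendroidal Segal spaces, exploiting the fact that, by Proposition~\ref{pr:char_I-fib_bw_Segal}, any such map is automatically an isofibration and a Dwyer--Kan equivalence, hence lies in $\mathcal{W}_f$. First I would replace the base: choose a $\mathcal{J}$-fibrant replacement $r_Y\colon Y\to\widetilde{Y}$ which is a $\mathcal{J}_0$-cofibration, as provided by Proposition~\ref{pr:J_0_replacement}. Then $\widetilde{Y}$ is a dendroidal Segal space, and by Proposition~\ref{pr:J0_trivial_at_eta} the map $r_{Y,\eta}\colon Y_\eta\to\widetilde{Y}_\eta$ is a trivial cofibration of simplicial sets with $\widetilde{Y}_\eta$ a Kan complex. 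Next I would factor the composite $X_\eta\to Y_\eta\to\widetilde{Y}_\eta$ as $X_\eta\xrightarrow{\sim}A\xrightarrow{q}\widetilde{Y}_\eta$, with the first map a trivial cofibration and $q$ a Kan fibration, so that $A$ is a Kan complex. Since $f_\eta$ is surjective by Proposition~\ref{pr:char_I-fibs} and $r_{Y,\eta}$ is a weak equivalence, a path-lifting argument shows that $q$ is surjective.

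With $q$ in hand I would set
$$\widetilde{X}:=\widetilde{Y}\times_{\cosk_\eta\widetilde{Y}_\eta}\cosk_\eta A,$$
and let $\widetilde{f}\colon\widetilde{X}\to\widetilde{Y}$ be the projection. The key observation is that $\widetilde{f}$ is the pullback of $\cosk_\eta(q)\colon\cosk_\eta A\to\cosk_\eta\widetilde{Y}_\eta$, and that $\cosk_\eta(q)$ is itself an $\mathcal{I}$-fibration: since $\cosk_\eta$ is right adjoint to evaluation at $\eta$, testing against the generators of $\mathcal{I}$ reduces to the fact that $q$ is a surjective Kan fibration, while for $T\neq\eta$ the generators are sent to isomorphisms because $(\partial T)_\eta=T_\eta$. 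As $\mathcal{I}$-fibrations are closed under pullback, $\widetilde{f}$ is an $\mathcal{I}$-fibration, so in particular $\widetilde{X}$ is Reedy fibrant. That $\widetilde{X}$ is in fact Segal I would deduce by inspecting matching maps exactly as in Remark~\ref{rm:alt_fullyfaithful}, using $\spn{T}_\eta=T_\eta$ and that $A$ is a Kan complex. Proposition~\ref{pr:char_I-fib_bw_Segal} then gives $\widetilde{f}\in\mathcal{W}_f$, which is the main payoff.

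It then remains to produce a comparison $r_X\colon X\to\widetilde{X}$ making the square commute and to check that it is a weak equivalence in $\dspsegal$, so that $(r_X,r_Y)$ exhibits $\widetilde{f}$ as a $\mathcal{J}$-fibrant replacement of $f$ lying in $\mathcal{W}_f$. I would factor $r_X$ as
$$X\longrightarrow Y\times_{\cosk_\eta Y_\eta}\cosk_\eta X_\eta\longrightarrow Y\times_{\cosk_\eta\widetilde{Y}_\eta}\cosk_\eta A\longrightarrow\widetilde{X}.$$
The first map is a Reedy trivial fibration, which is the strict form of homotopical fully faithfulness for $\mathcal{I}$-fibrations; I would prove it by induction on the number of vertices of $T$ from the trivial-fibration matching maps of Proposition~\ref{pr:char_I-fibs}, in the same manner as the implication from (ii) to (iii) in Proposition~\ref{pr:char_fullyfaithful}, noting that at $T=C_k$ the relative matching map is precisely $X_{C_k}\to Y_{C_k}\times_{Y_{\partial C_k}}X_{\partial C_k}$. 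The second map is a Reedy weak equivalence by right properness of $\sset$, since $\cosk_\eta$ carries the weak equivalences $X_\eta\to A$ and $Y_\eta\to\widetilde{Y}_\eta$ of Kan complexes to Reedy weak equivalences and the comparison is a pullback along Reedy fibrations. The third map is $\cosk_\eta(q)^*(r_Y)$, which is a trivial cofibration because $r_Y$ is one in the slice $\dspsegal_{/\cosk_\eta\widetilde{Y}_\eta}$ and $\cosk_\eta(q)^*$ is left Quillen by Proposition~\ref{or:oullback_cosk_left_Quillen}. Hence $r_X$ is a composite of weak equivalences in $\dspsegal$.

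The hard part will be the construction of the middle term together with the verification that $r_X$ is a weak equivalence: the strict fully faithful claim (that an $\mathcal{I}$-fibration exhibits $X$ as a Reedy trivial fibration over $Y\times_{\cosk_\eta Y_\eta}\cosk_\eta X_\eta$) and the right-properness comparison are where the $\mathcal{I}$-fibration hypothesis is genuinely used, and they require the careful bookkeeping of matching maps. Everything else—the Segal property of $\widetilde{X}$ and the membership $\widetilde{f}\in\mathcal{W}_f$—should then follow formally from Proposition~\ref{or:oullback_cosk_left_Quillen} and Proposition~\ref{pr:char_I-fib_bw_Segal}.
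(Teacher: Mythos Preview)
Your approach is correct and takes a genuinely different route from the paper's. The paper applies the small object argument for $\mathcal{J}_0$ in the arrow category to obtain a replacement $\widetilde{f}\colon\widetilde{X}\to\widetilde{Y}$ and then verifies directly that $\widetilde{f}$ is a Dwyer--Kan equivalence: essential surjectivity comes from surjectivity of $f_\eta$ together with Proposition~\ref{pr:J0_trivial_at_eta}, while homotopical fully faithfulness is obtained by showing that $\widetilde{X}\to\widetilde{Y}\times_{\cosk_\eta\widetilde{Y}_\eta}\cosk_\eta\widetilde{X}_\eta$ is a Segal weak equivalence, using Remark~\ref{rm:fullyfaithful_reedy_fib_case} for $f$ and Proposition~\ref{or:oullback_cosk_left_Quillen} exactly as you do. Your construction instead builds $\widetilde{X}$ explicitly as the pullback $\widetilde{Y}\times_{\cosk_\eta\widetilde{Y}_\eta}\cosk_\eta A$, so that $\widetilde{f}$ is an $\mathcal{I}$-fibration between Segal spaces by design and the Dwyer--Kan property follows for free from Proposition~\ref{pr:char_I-fib_bw_Segal}. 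The same two ingredients (the strict fully faithful factorisation of an $\mathcal{I}$-fibration and the left Quillen property of $\cosk_\eta(q)^*$) appear in both proofs; they are simply packaged differently. Your observation that $X\to Y\times_{\cosk_\eta Y_\eta}\cosk_\eta X_\eta$ is in fact a Reedy \emph{trivial fibration} (not just a Reedy weak equivalence) is correct: the relative matching map at any $T\neq\eta$ is literally that of $f$, since $(\partial T)_\eta=T_\eta$ forces the $\cosk_\eta$-factors to cancel.

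Two minor points. First, $X_\eta$ and $Y_\eta$ need not be Kan complexes, so the phrase ``weak equivalences of Kan complexes'' is inaccurate; the argument still goes through because $(\cosk_\eta S)_T=\prod_{T_\eta}S$ and finite products of simplicial sets preserve weak equivalences. Second, your $r_X$ is a Segal weak equivalence but not a $\mathcal{J}$-cofibration, so strictly speaking your square is not a $\mathcal{J}$-fibrant replacement in the sense required by condition~\ref{condV}. This is easily repaired: factor $r_X$ as a $\mathcal{J}$-cofibration $X\to X'$ followed by a $\mathcal{J}$-fibration $p\colon X'\to\widetilde{X}$; then $p$ is a Reedy weak equivalence between Segal spaces, hence lies in $\mathcal{W}_f$, and $\widetilde{f}\circ p$ is the desired replacement by 2-out-of-3.
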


\begin{proof}
    Apply the small object argument to obtain a $\mathcal{J}_0$-fibrant replacement of $f$:
    \[
        \begin{tikzcd}
            X \arrow[r, "j_X"] \arrow[d, "f"] & \widetilde{X} \arrow[d, "\widetilde{f}"] \\
            Y \arrow[r, "j_Y"] & \widetilde{Y}.
        \end{tikzcd}
    \]
    The right vertical map is a $\mathcal{J}_0$-fibration and the horizontal maps are $\mathcal{J}_0$-cofibrations with fibrant codomain. Hence, by Proposition~\ref{pr:J_0_replacement}, these are in fact $\mathcal{J}$-fibrant. Therefore $\widetilde{f}$ is a $\mathcal{J}$-fibrant replacement of $f$.

    Since $j_X$ and $j_Y$ are weak equivalences at $\eta$ (Proposition~\ref{pr:J0_trivial_at_eta}), and $f_\eta$ is surjective (Proposition~\ref{pr:char_I-fibs}), it follows that $\pi_0(\widetilde{f}_\eta)$ is surjective.  
    As $\widetilde{f}$ is a $\mathcal{J}_0$-fibration, $\widetilde{f}_\eta$ is a Kan fibration and hence it is surjective on vertices. Therefore $\ho(\widetilde{f})$ is essentially surjective.

    For fully faithfulness, by Remark~\ref{rm:alt_fullyfaithful} it suffices to show that
    $$
        \widetilde{\varphi}\colon 
        \widetilde{X}\longrightarrow 
        \widetilde{Y}\times_{\cosk_\eta\widetilde{Y}_\eta}\cosk_\eta\widetilde{X}_\eta =:Q
    $$
    is a weak equivalence in $\dsp_{\mathrm{Segal}}$. Consider the diagram
    $$
        \begin{tikzcd}
            X \arrow[r] \arrow[d, "f"] &
            \cosk_\eta X_\eta \arrow[r, "\cosk_\eta (j_X)_\eta"] \arrow[d] &
            \cosk_\eta \widetilde{X}_\eta \arrow[d] \\
            Y \arrow[r] &
            \cosk_\eta Y_\eta \arrow[r, "\cosk_\eta (j_Y)_\eta"] &
            \cosk_\eta \widetilde{Y}_\eta.
        \end{tikzcd}
    $$
    By applying Remark \ref{rm:fullyfaithful_reedy_fib_case} to $f$, we see that the left square is a homotopy pullback at every tree $T$ (using right properness of $\sset$ and the fact that the vertical maps are termwise fibrations). By Proposition~\ref{pr:J0_trivial_at_eta}, the maps $j_X$ and $j_Y$ are weak equivalences at $\eta$, so the horizontal maps in the right side square are weak equivalences at every tree $T$, as they are $n$-fold products of $(j_X)_\eta$ and $(j_Y)_\eta$ for $n$ the number of edges of $T$. Consequently the right side square is also a homotopy pullback at every tree, and so is the composite, so the induced map 
    $$
    \varphi:X\longrightarrow Y \times_{\text{cosk}_\eta \Tilde{Y}_\eta} \text{cosk}_\eta \Tilde{X}_\eta =P
    $$ 
    is a Reedy weak equivalence (again because of the right properness of $\sset$ and the vertical termwise fibrations).
    Now consider
    \[
        \begin{tikzcd}
            X \arrow[r] \arrow[d, "\varphi"'] &
            \widetilde{X} \arrow[d, "\widetilde{\varphi}"] \\
            P \arrow[r] & Q.
        \end{tikzcd}
    \]
    We have just shown that $\varphi\colon X\to P$ is a Reedy weak equivalence and hence a weak equivalence in $\dspsegal$.  
    Since $X\to\widetilde{X}$ and $Y\to\widetilde{Y}$ are $\mathcal{J}_0$-cofibrations, they are Segal weak equivalences. By Proposition~\ref{or:oullback_cosk_left_Quillen}, the map $P\to Q$ is also a Segal weak equivalence. 
    Applying 2-out-of-3 twice yields that $\widetilde{\varphi}\colon \widetilde{X}\to Q$ is a weak equivalence in $\dspsegal$.
\end{proof}

This establishes condition~\ref{condV} and therefore completes the proof of the existence of the model structure in Theorem~\ref{thm:main}.

\section{Comparison with other model structures}

We now study the relationship between the model category $\dspop$ and several other homotopical frameworks, namely simplicial Segal spaces, complete dendroidal Segal spaces, and Segal operads.  
To this end, we will make use of the following general criterion.

\begin{lem}\label{lm:pseudo_gen}
    Let $F\colon C \rightleftarrows D\colon G$ be an adjunction between model categories such that $F$ preserves cofibrations.  
    Suppose that there exists a set $J$ of cofibrations in $C$ with the following properties:
    \begin{itemize}
        \item[{\rm (i)}] An object $X\in C$ is fibrant if and only if it has the right lifting property with respect to $J$.
        \item[{\rm (ii)}] A map $f\colon X\to Y$ between fibrant objects is a fibration if and only if it has the right lifting property with respect to $J$.
    \end{itemize}
    If moreover $F$ sends every map in $J$ to a weak equivalence in $D$, then $F$ is a left Quillen functor.
\end{lem}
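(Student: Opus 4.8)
The plan is to verify the only two properties a left Quillen functor must satisfy: preservation of cofibrations, which is assumed, and preservation of trivial cofibrations. By definition this suffices. Since the model structure on the source category need not be cofibrantly generated by any tractable set of trivial cofibrations, I avoid generators entirely and argue purely through lifting properties together with a fibrancy-detection principle. The two main steps are: first, to upgrade hypotheses (i)--(ii) into the assertions that $G$ preserves fibrant objects and sends fibrations between fibrant objects to fibrations; and second, to deduce from this that $F$ carries trivial cofibrations to weak equivalences.

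For the first step, observe that any $j\in J$ is a cofibration, so $F(j)$ is a cofibration in $D$ (as $F$ preserves cofibrations) which is moreover a weak equivalence by hypothesis; hence $F(j)$ is a trivial cofibration. Now let $Z$ be fibrant in $D$. By adjunction, $G(Z)$ has the right lifting property against $j$ if and only if $Z$ has it against $F(j)$, which holds since $Z$ is fibrant and $F(j)$ is a trivial cofibration. By condition (i), $G(Z)$ is therefore fibrant, so $G$ preserves fibrant objects. Consequently, if $p$ is a fibration between fibrant objects of $D$, then $G(p)$ is a map between fibrant objects of $C$, and the same adjunction computation together with condition (ii) shows that $G(p)$ is a fibration.

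For the second step, let $i\colon A\to B$ be a trivial cofibration in $C$; then $F(i)$ is a cofibration in $D$, and it remains to see that it is a weak equivalence. I claim it has the left lifting property against every fibration $p$ between fibrant objects of $D$: indeed, by adjunction such a lift is equivalent to $i$ lifting against $G(p)$, and $G(p)$ is a fibration by the first step, so the trivial cofibration $i$ does lift against it. To conclude I invoke the detection principle that \emph{a cofibration with the left lifting property against all fibrations between fibrant objects is automatically a weak equivalence}, which then forces $F(i)$ to be a trivial cofibration.

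The main obstacle is precisely this detection principle, the subtlety being that $F(A)$ and $F(B)$ are in general not fibrant, so one cannot directly exhibit $F(i)$ as a retract of a trivial cofibration. I would handle it in two stages. When the cofibration already lies between fibrant objects, factor it as a trivial cofibration followed by a fibration, the latter necessarily one between fibrant objects; lifting against that fibration with the identity on the codomain as the bottom edge realizes the cofibration as a retract of the trivial cofibration, hence itself trivial. For the general case I first push the cofibration out along a fibrant replacement of its domain---this preserves the left lifting property (closed under cobase change) and, by two-out-of-three applied to the two trivial-cofibration legs of the pushout square, preserves the property of being a weak equivalence---and then postcompose with a fibrant replacement of the resulting codomain; since left lifting properties are closed under composition and trivial cofibrations always lift against fibrations, the composite is a cofibration between fibrant objects still enjoying the required lifting property, hence a weak equivalence by the fibrant case, and the two reductions recover the claim for the original map.
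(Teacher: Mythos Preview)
Your proof is correct and follows essentially the same route as the paper. The first step---showing via adjunction that $G$ preserves fibrant objects and fibrations between fibrant objects---is identical to the paper's argument. The only difference is that the paper then concludes immediately by citing \cite[Proposition~E.2.14]{joyal} (the statement that $F$ is left Quillen once $F$ preserves cofibrations and $G$ preserves fibrations between fibrant objects), whereas you prove this reduction by hand via your ``detection principle''. Your two-stage proof of that principle (retract argument in the fibrant case, pushout-then-fibrant-replacement reduction in general) is correct and is in fact one of the standard proofs of Joyal's proposition; so your version is simply more self-contained.
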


\begin{proof}
    Let $p\colon X\to Y$ be a fibration between fibrant objects in $D$.  
    Since $F$ preserves cofibrations, \cite[Proposition~E.2.14]{joyal} implies that it suffices to show that the right adjoint $G$ sends $p$ to a fibration in $C$.

    Because $F(J)$ consists of trivial cofibrations, the object $X$ has the right lifting property with respect to $F(J)$.  
    By adjunction, this implies that $G(X)$ has the right lifting property with respect to $J$, hence is fibrant by hypothesis. The same argument applies to $G(Y)$.

    Moreover, since $p$ lifts against $F(J)$, adjunction again shows that $G(p)$ lifts against $J$. As $G(p)$ is a map between fibrant objects with the right lifting property against $J$, it follows that $G(p)$ is a fibration in $C$.
\end{proof}

Proposition~\ref{pr:J-fib_iff_isofib} shows that the hypotheses of Lemma~\ref{lm:pseudo_gen} are satisfied for $J=\mathcal{J}$.  
Consequently, this criterion will allow us to verify that several left adjoints out of $\dspop$ are left Quillen functors.

\subsection{Simplicial Segal spaces}

We begin by comparing the model structure $\dspop$ with the model structure $\sspcat$ introduced in \cite{moser2024}.  
The latter is obtained by applying Theorem~\ref{th:fibrantly-induced} with the sets $\mathcal{I}_{\mathrm{Cat}}$ and $\mathcal{J}_{\mathrm{Cat}}$ defined in \cite[Notation~3.5]{moser2024} and \cite[Notation~3.9]{moser2024} (where they appear without the subscript).  
In terms of our generating sets $\mathcal{I}$ and $\mathcal{J}$, these can be succinctly described as
$$
    \mathcal{I}_{\mathrm{Cat}} = i_!^{-1}(\mathcal{I}),
    \qquad
    \mathcal{J}_{\mathrm{Cat}} = i_!^{-1}(\mathcal{J}).
$$

We will start by showing that the model structure $\sspcat$ can in fact be recovered from $\dspop$.
Recall that the category $\ssp$ is canonically isomorphic to the slice category $\dsp_{/\eta}$, with the functor $i_!$ corresponding to the slice projection.  
It follows that $\dspop$ induces a model structure on $\ssp$, which we denote by $\ssp_{\mathrm{slice}}$.

\begin{thm}
    The model structures $\sspcat$ and $\ssp_{\mathrm{slice}}$ coincide.  
    In particular, the adjunction
    \begin{equation*}
        i_! \colon \sspcat \rightleftarrows \dspop \colon i^*
    \end{equation*}
    is a Quillen adjunction.
\end{thm}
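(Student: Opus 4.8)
The plan is to show that the model structures $\ssp_{\mathrm{slice}}$ and $\sspcat$ have exactly the same cofibrations and exactly the same fibrant objects; since a model structure is determined by these two classes (see \cite{joyal}), this forces them to coincide. The ``in particular'' is then automatic: under the identification $\ssp\cong\dsp_{/\om\eta}$ the functor $i_!$ is the slice projection $(\dspop)_{/\om\eta}\to\dspop$, which is always left Quillen, as it creates cofibrations and weak equivalences and has right adjoint $i^*$. (Alternatively, it follows directly from Lemma~\ref{lm:pseudo_gen} applied to $i_!$ with $J=\mathcal{J}_{\mathrm{Cat}}$, since $i_!\mathcal{J}_{\mathrm{Cat}}\subseteq\mathcal{J}$ consists of trivial cofibrations of $\dspop$.)

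For the cofibrations, recall that $f$ is a cofibration in $\ssp_{\mathrm{slice}}$ precisely when $i_!f$ lies in $\mathcal{I}\text{-cof}$. By Proposition~\ref{pr:char_I-cof} this holds iff $i_!f$ is a normal monomorphism and $(i_!f)_\eta$ is an extension by a set. Using $i^*i_!\cong\mathrm{id}$ together with the fact that $i^*$ is restriction along $\Delta\hookrightarrow\Omega$, so that $(i^*X)_0=X_\eta$, one identifies $(i_!f)_\eta$ with $f_0$, while normality of $i_!f$ forces $f=i^*(i_!f)$ to be a monomorphism. Thus every $\ssp_{\mathrm{slice}}$-cofibration is a monomorphism $f$ with $f_0$ an extension by a set, which by \cite{moser2024} is exactly the class of $\sspcat$-cofibrations. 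For the reverse inclusion, $\mathcal{I}_{\mathrm{Cat}}=i_!^{-1}(\mathcal{I})$ gives $i_!(\mathcal{I}_{\mathrm{Cat}})\subseteq\mathcal{I}$, and since $i_!$ is a left adjoint it carries $\mathcal{I}_{\mathrm{Cat}}\text{-cof}$ into $\mathcal{I}\text{-cof}$; hence every $\sspcat$-cofibration is an $\ssp_{\mathrm{slice}}$-cofibration. The two classes therefore agree.

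For the fibrant objects, note first that $\om\eta$ is itself a dendroidal Segal space, hence $\dspop$-fibrant, and that $\eta_T=\emptyset$ whenever $T$ is not linear. A simplicial space $A$ corresponds to $(i_!A\to\om\eta)$, which is fibrant in the slice iff $i_!A\to\om\eta$ is a $\dspop$-fibration. Since the codomain is fibrant, a short argument using Theorem~\ref{th:fibrantly-induced} (compose with $\om\eta\to *$, and use that a $\mathcal{J}$-fibration with fibrant codomain has fibrant domain while fibrations between fibrant objects are precisely the $\mathcal{J}$-fibrations) shows that this is equivalent to $i_!A\to\om\eta$ being a $\mathcal{J}$-fibration. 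Now every generator of $\mathcal{J}$ indexed by a non-linear tree $T$ has codomain of the form $T\cdot\std{n}$, which admits no map to $\om\eta$ because $\eta_T=\emptyset$; the corresponding lifting problems are vacuous. Consequently $i_!A\to\om\eta$ is a $\mathcal{J}$-fibration iff it lifts against the remaining (linear) generators $i_!(\mathcal{J}_{\mathrm{Cat}})$, and by the adjunction $i_!\dashv i^*$ together with $i^*(i_!A\to\om\eta)\cong(A\to *)$ this is equivalent to $A$ having the right lifting property against $\mathcal{J}_{\mathrm{Cat}}$, i.e.\ to $A$ being a simplicial Segal space. Thus the fibrant objects of $\ssp_{\mathrm{slice}}$ are exactly the Segal spaces, which are the fibrant objects of $\sspcat$.

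I expect the identification of the fibrant objects to be the main obstacle, for two reasons. First, fibrancy in the slice means that $i_!A\to\om\eta$ is a \emph{fibration}, not merely that $i_!A$ is fibrant; this gap is bridged by the composition argument and the characterization of fibrations between fibrant objects in Theorem~\ref{th:fibrantly-induced}. Second, one must check that the genuinely dendroidal generators of $\mathcal{J}$ impose no condition, which rests on the vanishing $\eta_T=\emptyset$ for non-linear $T$ and is what allows the dendroidal fibrancy condition to collapse to the simplicial Segal condition. Once cofibrations and fibrant objects are matched, the uniqueness principle yields $\ssp_{\mathrm{slice}}=\sspcat$.
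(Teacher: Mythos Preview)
Your proof is correct and follows the same overall strategy as the paper: identify the cofibrations and the fibrant objects of the two model structures and then invoke the fact that these data determine a model structure uniquely. The paper's proof is a one-liner that simply records the equalities $i_!^{-1}(\mathcal I)=\mathcal I_{\mathrm{Cat}}$ and $i_!^{-1}(\mathcal J)=\mathcal J_{\mathrm{Cat}}$ and asserts that the slice model structure therefore has the same cofibrations and fibrant objects as $\sspcat$; your argument unpacks precisely what these equalities mean in practice. In particular, your observation that the non-linear generators of $\mathcal J$ impose no lifting condition over $\om\eta$ (because $\eta_T=\emptyset$ for non-linear $T$) is exactly the content of $i_!^{-1}(\mathcal J)=\mathcal J_{\mathrm{Cat}}$, and your composition argument showing that fibrancy of $i_!A\to\om\eta$ in the slice is detected by $\mathcal J$ fills in the one step the paper leaves implicit. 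For the cofibrations you could have shortened things by noting, as the paper does, that the slice of a cofibrantly generated model category is cofibrantly generated by the sliced generators, so that $i_!^{-1}(\mathcal I)=\mathcal I_{\mathrm{Cat}}$ immediately gives the equality of cofibration classes without passing through Proposition~\ref{pr:char_I-cof}; but your route via the explicit characterization is equally valid.
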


\begin{proof}
    This follows immediately from the definition of the induced slice model structure together with the equalities
    $i_!^{-1}(\mathcal{I}) = \mathcal{I}_{\mathrm{Cat}}$ and
    $i_!^{-1}(\mathcal{J}) = \mathcal{J}_{\mathrm{Cat}}$.  
    Hence both model structures have the same classes of cofibrations and fibrant objects, which uniquely determine the model structure.
\end{proof}

\begin{prop}\label{pr:comp_with_Cat}
    The dendroidal tensor
    \begin{equation*}
        - \otimes i_! - \colon \dsp \times \ssp \longrightarrow \dsp
    \end{equation*}
    is a left Quillen bifunctor with respect to the model structures $\dsp_{\mathrm{Op}}$ and $\ssp_{\mathrm{Cat}}$.
\end{prop}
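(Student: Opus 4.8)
The plan is to reduce the statement, via the adjoint (pullback-hom) reformulation, to the corresponding Reedy statement (Proposition~\ref{pr:Reedy_tensor}) together with two fibration statements for the two pullback-homs attached to~\eqref{eq:adj_otimes}. Writing that two-variable adjunction as
\[
    \dsp(X\otimes i_!M,Z)\cong\ssp\bigl(M,i^*(\expd{Z}{X})\bigr)\cong\dsp\bigl(X,\expd{Z}{i_!M}\bigr),
\]
the functor $-\otimes i_!-$ is a left Quillen bifunctor precisely when, for every cofibration $f\colon A\to B$ in $\dspop$, every cofibration $g\colon C\to D$ in $\sspcat$, and every fibration $p\colon Z\to W$ in $\dspop$, the pushout-product $f\boxtimes i_!g$ is a cofibration and the two pullback-homs
\[
    \langle f,p\rangle\colon i^*(\expd{Z}{B})\to i^*(\expd{Z}{A})\times_{i^*(\expd{W}{A})}i^*(\expd{W}{B}),
    \qquad
    [g,p]\colon \expd{Z}{i_!D}\to\expd{Z}{i_!C}\times_{\expd{W}{i_!C}}\expd{W}{i_!D}
\]
are fibrations (in $\sspcat$ and $\dspop$ respectively), trivial whenever a factor is. Because $-\otimes i_!-$ will be seen to preserve cofibrations, the argument of \cite[Proposition~E.2.14]{joyal} (already used in Lemma~\ref{lm:pseudo_gen}) lets me restrict $p$ to the fibrations between fibrant objects, i.e.\ the isofibrations between dendroidal Segal spaces. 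Moreover, once $\langle f,p\rangle$ and $[g,p]$ are known to be isofibrations between Segal spaces, the two trivial conditions of a left Quillen bifunctor follow formally: a cofibration is a trivial cofibration exactly when it lifts against all isofibrations between Segal spaces, and $f\boxtimes i_!g$ lifts against such a $p$ if and only if $f$ lifts against $[g,p]$ (resp.\ $g$ against $\langle f,p\rangle$), which holds whenever $f$ (resp.\ $g$) is a trivial cofibration.

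I would first treat cofibration preservation. If $f$ and $g$ are cofibrations of $\dspop$ and $\sspcat$, they are in particular Reedy cofibrations, so $f\boxtimes i_!g$ is a normal monomorphism by Proposition~\ref{pr:Reedy_tensor}. By Proposition~\ref{pr:char_I-cof} it remains to check that its $\eta$-component is an extension by a set. Here I use that the Boardman--Vogt tensor is the cartesian product on colours, so that $(X\otimes Y)_\eta\cong X_\eta\times Y_\eta$, together with $(i_!M)_\eta\cong M_0$; evaluation at $\eta$ therefore carries $\boxtimes i_!$ to the cartesian pushout-product, giving $(f\boxtimes i_!g)_\eta\cong f_\eta\boxc g_0$. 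Since $f_\eta$ and $g_0$ are extensions by a set (by Proposition~\ref{pr:char_I-cof} and its simplicial counterpart in \cite{moser2024}), a reduction to the generators of Proposition~\ref{pr:ext_by_set_char} finishes this part: the cartesian pushout-product of a horn inclusion with any cofibration is a trivial cofibration of simplicial sets (the Kan--Quillen structure being cartesian), while $\emptyset\to\std{0}$ acts as a unit, so all generator pairings remain within extensions by a set.

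The core of the argument is the two fibration statements. Their sources and targets are genuine Segal spaces: $\expd{Z}{i_!D}$ is Segal by Proposition~\ref{pr:Segal_tensor}, $i^*(\expd{Z}{B})$ is Segal by condition~(ii) in its proof, and Segal spaces are stable under pullback along isofibrations. Both pullback-homs are Reedy fibrations, again by Proposition~\ref{pr:Reedy_tensor}. By Proposition~\ref{pr:J-fib_iff_isofib} (and its simplicial analogue for $\langle f,p\rangle$), it then remains only to verify the isofibration condition, i.e.\ the right lifting property against $i_!N[0]\to i_!NI[1]$. By the defining adjunctions this is equivalent to $p$ having the right lifting property against the pushout-products $f\boxtimes(i_!N[0]\to i_!NI[1])$ and $(i_!N[0]\to i_!NI[1])\boxtimes i_!g$; since $p$ is a $\mathcal{J}$-fibration, it suffices to show that these maps lie in $\mathcal{J}\text{-cof}$. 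As $-\boxtimes(i_!N[0]\to i_!NI[1])$ preserves the saturation operations, I may reduce $f$ and $g$ to the boundary and horn generators of $\mathcal{I}$ and $\mathcal{I}_{\mathrm{Cat}}$ and, distributing $\boxtimes$ over $\boxdot$, separate off harmless simplicial factors; the inner-horn and Segal factors are inner anodyne by \cite[Corollary~6.26]{Heuts2022} and hence lie in $\mathcal{J}\text{-cof}$ through Proposition~\ref{pr:Segal_tensor}.

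The step I expect to be the main obstacle is precisely the new factor involving the walking isomorphism: one must show that the pushout-product of $i_!N[0]\to i_!NI[1]$ with a boundary inclusion $\partial T\to T$ of trees is $\mathcal{J}$-anodyne, a phenomenon with no counterpart in the Segal theory. This is the dendroidal analogue of the corresponding step in \cite{moser2024}, and I would establish it by importing that simplicial computation through $i^*$ and the identity $i^*(\expd{Z}{i_!M})\cong(i^*Z)^{M}$, supplemented by Proposition~\ref{pr:exp_isofib}, which directly identifies the relevant pullback-hom of $i_!NI[1]$ against an isofibration as an isofibration, and by the Rezk-style fibrewise analysis carried out corolla by corolla in the proof of Proposition~\ref{pr:fact_J-fib}. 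The subtlety is that, associativity of $\otimes$ failing, the tensor with $i_!NI[1]$ must be handled only through the weak enrichment~\eqref{eq:weak_enrichment}, and one must keep the $\eta$-component under control throughout so as not to leave the class of $\mathcal{I}$-cofibrations. With these lifting statements in hand, the reductions of the first paragraph assemble into the full left Quillen bifunctor property.
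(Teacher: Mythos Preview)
Your overall strategy is the paper's, transposed to the pullback-hom side of the adjunction: cofibration preservation is handled exactly as in the paper, and the hard case (the factor $q\colon N[0]\to NI[1]$) is to be settled using Proposition~\ref{pr:exp_isofib} together with the Dwyer--Kan argument of Proposition~\ref{pr:fact_J-fib}. That is precisely what the paper does: it shows that $[q,p]=\expd{X}{i_!NI[1]}\to\expd{Y}{i_!NI[1]}\times_Y X$ is an isofibration \emph{and} a Dwyer--Kan equivalence, hence an $\mathcal{I}$-fibration by Proposition~\ref{pr:char_I-fib_bw_Segal}, so every $\mathcal{I}$-cofibration $f$ lifts against it.

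There is, however, a genuine confusion in your write-up that you should correct. You repeatedly aim to show that $f\boxtimes i_!q$ is ``$\mathcal{J}$-anodyne'' and set up a reduction to generators of $\mathcal{I}$ for this purpose, invoking inner-anodyne results. But the pushout-product $(\partial T\to T)\boxtimes(\eta\to i_!NI[1])$ is not inner anodyne, and there is no reason it should lie in $\mathcal{J}\text{-cof}$; the sentence about ``inner-horn and Segal factors'' via \cite[Corollary~6.26]{Heuts2022} applies to the $\mathcal{J}_0$ cases, not here. What the method you yourself name (exp\_isofib plus the Rezk-style analysis) actually proves is the dual statement that $[q,p]$ is a \emph{trivial} fibration (an $\mathcal{I}$-fibration), hence that $f\boxtimes i_!q$ is a \emph{trivial cofibration} in $\dspop$, which is weaker than $\mathcal{J}$-anodyne but is exactly what you need. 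The reduction to generators of $\mathcal{I}$ is then superfluous: once $[q,p]$ is an $\mathcal{I}$-fibration, every $\mathcal{I}$-cofibration $f$ lifts against it directly. Similarly, for the other direction you need $i_!\bigl((N[0]\to NI[1])\boxc g\bigr)$ to lift against $p$; this map is a trivial cofibration in $\dspop$ (by cartesian closedness of $\sspcat$ and the Quillen adjunction $i_!\dashv i^*$), but again not a priori a $\mathcal{J}$-cofibration. Replace ``$\mathcal{J}$-anodyne'' by ``trivial cofibration in $\dspop$'' throughout and drop the generator reduction for the $NI[1]$ case, and your argument aligns with the paper's.
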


\begin{proof}
    Let $f$ and $g$ be cofibrations in $\dsp_{\mathrm{Op}}$ and $\ssp_{\mathrm{Cat}}$, respectively.  
    In particular, they are Reedy cofibrations, so by Proposition~\ref{pr:Reedy_tensor} the pushout--product
    $f \boxtimes i_! g$ is a normal monomorphism.  
    Since $(A \otimes B)_\eta = A_\eta \times B_\eta$, we have
    $$
        (f \boxtimes i_! g)_\eta = f_\eta \,\square\, i_! g_\eta .
    $$
    By Proposition~\ref{pr:ext_by_set_char}, the class of extensions by sets is generated by the trivial cofibrations of simplicial sets (which are closed under $\square$) together with the map $\emptyset \to \std{0}$, which is the unit for $\square$.  
    Hence extensions by sets are closed under $\square$, and Proposition~\ref{pr:char_I-cof} implies that $f \boxtimes i_! g$ is a cofibration in $\dsp_{\mathrm{Op}}$.

    Assume now that either $f$ or $g$ is a trivial cofibration.  
    We must show that $f \boxtimes i_! g$ is again a trivial cofibration.  
    By Lemma~\ref{lm:pseudo_gen}, it suffices to consider the case where either $f$ or $g$ is an anodyne extension, that is, a $\mathcal{J}$-cofibration.  
    Since $\mathcal{J}$-cof is generated by $\mathcal{J}_0$-cof together with the map $i_!N[0] \to i_!NI[1]$, we treat these two cases separately.

    If either $f$ or $g$ lies in $\mathcal{J}_0$-cof, then $f \boxtimes i_! g$ belongs to $\mathcal{J}_0$-cof by Proposition~\ref{pr:Reedy_tensor}.  
    If $f = i_!N[0] \to i_!NI[1]$, then using that $i_!(X \times Y) = i_!X \otimes i_!Y$ and the inclusion $i_!(\mathcal{J}_{\mathrm{Cat}}) \subset \mathcal{J}$, the claim reduces to the simplicial case. This holds since $\ssp_{\mathrm{Cat}}$ is cartesian closed \cite[Proposition~5.3]{moser2024}.

    Finally, if $g = N[0] \to NI[1]$, then $f \boxtimes i_! g$ lifts against an isofibration of dendroidal Segal spaces $p \colon X \to Y$ if and only if $f$ lifts against the induced map
    $$
        p^{\boxtimes i_! g} \colon \expd{X}{i_!NI[1]} \longrightarrow \expd{Y}{i_!NI[1]} \times_Y X .
    $$
    By Proposition~\ref{pr:exp_isofib}, this map is an isofibration.  
    If we show that it is also a Dwyer--Kan equivalence, then Proposition~\ref{pr:char_I-fibs} implies that it is an $\mathcal{I}$-fibration, and hence lifts against $f$.

    The maps $\expd{X}{i_!NI[1]} \to X$ and $\expd{Y}{i_!NI[1]} \to Y$ are isofibrations by Proposition~\ref{pr:exp_isofib} and Dwyer--Kan equivalences by Proposition~\ref{pr:fact_J-fib} and the $2$-out-of-$3$ property.  
    Hence they are $\mathcal{I}$-fibrations by Proposition~\ref{pr:char_I-fibs}.  
    In the factorization
    $$
        \expd{X}{i_!NI[1]} \longrightarrow \expd{Y}{i_!NI[1]} \times_Y X \to X ,
    $$
    the composite is therefore an $\mathcal{I}$-fibration, and the right-hand map, being a pullback of one, is again a Dwyer--Kan equivalence. The claim follows by another application of $2$-out-of-$3$.
\end{proof}

As in Theorem~\ref{th:Segal_ho_enriched}, we can now use this to establish a homotopical enrichment.

\begin{thm}
    The model structure $\dspop$ is homotopically $\sspcat$-enriched in the sense of \cite{Heuts2015}.
\end{thm}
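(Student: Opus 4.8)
The plan is to check the two conditions in the definition of a homotopically enriched model category \cite[Definition~3.5.4]{Heuts2015}, following the pattern of Theorem~\ref{th:Segal_ho_enriched}. The first condition is exactly the content of Proposition~\ref{pr:comp_with_Cat}. The second asks that, for cofibrations $f,g$ in $\sspcat$ and a cofibration $h$ in $\dspop$, the comparison map $\phi_{f,g,h}$ built from the associator of \eqref{eq:weak_enrichment} (as in the diagram of Theorem~\ref{th:Segal_ho_enriched}) is a trivial cofibration in $\dspop$. Reducing to generating cofibrations is awkward here, since the cofibrations of $\sspcat$ and $\dspop$ are no longer generated by boundary inclusions of representables alone. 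Instead I would deduce the statement from the already-proved Segal version together with two upgrading steps. The point is that the inputs are still cofibrations for the ambient Segal structures: a cofibration in $\sspcat$ is in particular a monomorphism, hence a cofibration in $\sspsegal$, and a cofibration in $\dspop$ is a normal monomorphism, hence a cofibration in $\dspsegal$. Thus Theorem~\ref{th:Segal_ho_enriched} applies and shows that $\phi_{f,g,h}$ is a trivial cofibration in $\dspsegal$; in particular it is a normal monomorphism and a weak equivalence in $\dspsegal$. It remains to promote these two facts to $\dspop$.

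To see that $\phi_{f,g,h}$ is a cofibration in $\dspop$, by Proposition~\ref{pr:char_I-cof} it suffices to verify that its $\eta$-component is an extension by a set, normality being already known. Here I would use that the tensor product is cartesian at $\eta$, namely $(A\otimes B)_\eta\cong A_\eta\times B_\eta$, together with the identity $i_!M\otimes i_!N\cong i_!(M\times N)$. These two identifications show that both the source and target of the associator $\alpha$ restrict at $\eta$ to $(i_!M)_\eta\times(i_!N)_\eta\times X_\eta$, and that $\alpha_\eta$ is the canonical associativity isomorphism of cartesian products. Since evaluation at $\eta$ preserves colimits, the entire associativity diagram degenerates at $\eta$ and $(\phi_{f,g,h})_\eta$ is an isomorphism, which is trivially an extension by a set.

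Finally, to see that $\phi_{f,g,h}$ is a weak equivalence in $\dspop$, I would establish the bridging fact that every weak equivalence of $\dspsegal$ is a weak equivalence of $\dspop$. Given a Segal weak equivalence $w\colon A\to B$, choose $\mathcal{J}_0$-fibrant replacements $A\to\widetilde{A}$ and $B\to\widetilde{B}$ through $\mathcal{J}_0$-cofibrations as in Proposition~\ref{pr:J_0_replacement}; these are trivial cofibrations in $\dspsegal$, so by two-out-of-three the induced map $\widetilde{w}\colon\widetilde{A}\to\widetilde{B}$ is a levelwise weak equivalence between dendroidal Segal spaces, and hence a Dwyer--Kan equivalence. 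Thus $w$ admits a $\mathcal{J}$-fibrant replacement lying in $\mathcal{W}_f$, so $w\in\mathcal{W}$. Applying this to $\phi_{f,g,h}$, and combining with the previous paragraph, shows that $\phi_{f,g,h}$ is a trivial cofibration in $\dspop$, completing the verification of the second condition. I expect the main obstacle to be the $\eta$-component computation: one must be sure that the associator is a genuine isomorphism at $\eta$, so that no extra homotopical data is introduced there and the cofibrancy condition defining $\dspop$ is met; the remaining steps are formal consequences of Theorem~\ref{th:Segal_ho_enriched} and the fibrantly generated recognition principle.
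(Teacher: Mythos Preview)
Your proof is correct, but it takes a longer route than the paper. The paper observes that the proof of Theorem~\ref{th:Segal_ho_enriched} actually establishes something slightly stronger than its stated conclusion: for Segal cofibrations $f,g,h$, the map $\phi_{f,g,h}$ is not merely a trivial cofibration in $\dspsegal$ but in fact a $\mathcal{J}_0$-cofibration (since $\phi_{f_1,g_1,h_1}$ is inner anodyne and the $\boxdot$-product with a simplicial cofibration stays in $\mathcal{J}_0$-cof). Since $\mathcal{J}_0\subset\mathcal{J}$, every $\mathcal{J}_0$-cofibration is automatically a trivial cofibration in $\dspop$, and the result follows in one line once one notes that cofibrations in $\sspcat$ and $\dspop$ are in particular Segal cofibrations.

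By contrast, you use only the \emph{conclusion} of Theorem~\ref{th:Segal_ho_enriched} and then perform two separate upgrades: computing that $(\phi_{f,g,h})_\eta$ is an isomorphism (correct, since the associator is an isomorphism at $\eta$ where $\otimes$ becomes cartesian), and showing that Segal weak equivalences are weak equivalences in $\dspop$ via a direct $\mathcal{J}_0$-fibrant replacement argument. The latter step is valid and in fact anticipates the stronger Lemma~\ref{lm:weak_eq_are_the_same}, proved later in the paper. Your approach has the virtue of being self-contained --- it does not require re-inspecting the internals of the proof of Theorem~\ref{th:Segal_ho_enriched} --- but the paper's observation that $\phi_{f,g,h}\in\mathcal{J}_0$-cof gives a much shorter argument and avoids the $\eta$-component computation entirely.
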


\begin{proof}
    The first axiom follows from Proposition~\ref{pr:comp_with_Cat}.  
    For the second, the proof of Theorem~\ref{th:Segal_ho_enriched} shows that for all cofibrations
    $f,g \in \sspsegal$ and $h \in \dspsegal$, the map $\phi_{f,g,h}$ is a $\mathcal{J}_0$-cofibration, hence a trivial cofibration in $\dspop$.  
    Since the cofibrations in $\sspcat$ and $\dspop$ are contained in those of their respective Segal model structures, the result follows.
\end{proof}

This completes the proof of the final statement of Theorem~\ref{thm:main_existence}.

\subsection{Complete dendroidal Segal spaces}

We now compare the model structure $\dspop$ with the complete dendroidal Segal space model structure introduced by Cisinski and Moerdijk in \cite{Cisinski_Moerdijk_2}, which generalizes Rezk’s complete Segal space model structure from \cite{Rezk_2001}.

\begin{thm}
    There exists a left proper simplicial model structure on the category $\dsp$, obtained as a left Bousfield localization of $\dspsegal$ with respect to the morphism $i_!N[0]\to i_!NI[1]$.
    This model structure satisfies the following properties:
    \begin{itemize}
        \item[{\rm (i)}] The cofibrations are the normal monomorphisms.
        \item[{\rm (ii)}] The fibrant objects are the complete dendroidal Segal spaces.
        \item[{\rm (iii)}] The fibrations (respectively, weak equivalences) between fibrant objects are the Reedy fibrations (respectively, Reedy weak equivalences).
        \item[{\rm (iv)}] More generally, the weak equivalences between dendroidal Segal spaces are the Dwyer--Kan equivalences.
    \end{itemize}
    We denote this model structure by $\dspcss$.
\end{thm}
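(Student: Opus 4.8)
The plan is to realize $\dspcss$ directly as the left Bousfield localization of $\dspsegal$ at the single map $i_!N[0]\to i_!NI[1]$. The model category $\dspsegal$ is cofibrantly generated (being itself a left Bousfield localization of the Reedy structure), left proper (as a localization of the Reedy structure, which is left proper since $\sset$ is), and simplicial by Proposition~\ref{pr:Segal_simplicial}; moreover, as a localization of a presheaf model category it is cellular. Hence the localization exists and is again left proper and simplicial by the standard theory of left Bousfield localizations of left proper cellular simplicial model categories \cite{Hirschhorn2009}. Since localization does not alter the cofibrations, these remain the normal monomorphisms, which is~(i); and the general structure of a localization identifies, among the fibrant (equivalently, local) objects, the weak equivalences and fibrations with those of $\dspsegal$, namely the Reedy weak equivalences and Reedy fibrations, which is~(iii).

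\textbf{Fibrant objects.} For~(ii), the fibrant objects are exactly the $\dspsegal$-fibrant objects that are local with respect to $i_!N[0]\to i_!NI[1]$. An object $X$ is local precisely when
$$
    \Map{i_!NI[1]}{X}\longrightarrow\Map{i_!N[0]}{X}
$$
is a trivial fibration. Using the enriched adjunction $\Map{i_!A}{X}\cong\Map{A}{i^*X}$, this map is identified with $\Map{NI[1]}{i^*X}\to\Map{N[0]}{i^*X}$, which is exactly the completeness condition on the simplicial Segal space $i^*X$. Since $i^*$ preserves Segal spaces and a dendroidal Segal space is complete if and only if $i^*X$ is complete, the local Segal spaces are precisely the complete dendroidal Segal spaces. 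This also identifies the present localization with the complete dendroidal Segal space model structure of \cite{Cisinski_Moerdijk_2}, as both are localizations of $\dspsegal$ with the same class of local objects.

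\textbf{Weak equivalences between Segal spaces.} This is clause~(iv) and the crux of the argument. Let $f\colon X\to Y$ be a map of dendroidal Segal spaces, and choose completions $X\to\widehat X$ and $Y\to\widehat Y$ (fibrant replacements in $\dspcss$), yielding $\widehat f\colon\widehat X\to\widehat Y$ between complete Segal spaces. By definition of the localization, $f$ is a $\dspcss$-weak equivalence if and only if $\widehat f$ is a Reedy weak equivalence. The argument rests on two points: first, that each completion map is a Dwyer--Kan equivalence; and second, that between complete dendroidal Segal spaces the Reedy weak equivalences coincide with the Dwyer--Kan equivalences. Granting these, $f$ is a $\dspcss$-weak equivalence iff $\widehat f$ is Reedy iff $\widehat f$ is Dwyer--Kan iff $f$ is Dwyer--Kan, the last step using two-out-of-three for Dwyer--Kan equivalences together with the fact that the completion maps are such.

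\textbf{The two technical inputs and the main obstacle.} For the equality of classes between complete objects, a Reedy weak equivalence is always a Dwyer--Kan equivalence (the earlier Example), while conversely a Dwyer--Kan equivalence $\widehat f$ is homotopically fully faithful, so by Proposition~\ref{pr:char_fullyfaithful} it induces equivalences on all relative corolla matching maps; completeness forces $\widehat X_\eta$ and $\widehat Y_\eta$ to recover the isomorphism classes together with their higher automorphism data, so essential surjectivity upgrades to a weak equivalence $\widehat f_\eta$, and Proposition~\ref{pr:char_fullyfaithful} then yields a Reedy weak equivalence. That the completion map $X\to\widehat X$ is a Dwyer--Kan equivalence is where the real difficulty lies: essential surjectivity and the correct homotopy type of objects and unary mapping spaces transport from Rezk's simplicial completion result \cite{Rezk_2001} applied to $i^*X\to i^*\widehat X\cong\widehat{i^*X}$ (using that $i^*$ preserves Segal spaces, completeness, and the relevant weak equivalences), but fully faithfulness at the higher corollas $C_k$ is invisible to $i^*$. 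For those I would argue exactly as in Proposition~\ref{pr:fact_J-fib}: the localizing map is $i_!$ of a simplicial map and hence concentrated in the $\eta$-direction, so on each multi-mapping space the induced map is a weak equivalence on fibers, via the same fiberwise homotopy-pullback analysis carried out there through the Segal condition. Establishing this corolla-level statement in the dendroidal setting, where $i^*$ provides no leverage, is the step that requires genuinely new work beyond the simplicial case.
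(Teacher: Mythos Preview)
Your treatment of (i)--(iii) is correct and essentially identical to what the paper does: it too appeals to standard Bousfield localization results, and your identification of the local objects via the enriched adjunction $\Map{i_!A}{X}\cong\Map{A}{i^*X}$ is the right way to see (ii).

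For (iv), the paper does not give a direct argument: it simply cites \cite[Theorem~12.36]{Heuts2022}. You instead attempt to prove it from scratch, and the outline you give (completion is Dwyer--Kan, plus Reedy${}={}$Dwyer--Kan between complete objects, plus 2-out-of-3) is the correct shape. The second of these ingredients is fine and your sketch of it works. The first, however, is a genuine gap, as you yourself flag.

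The specific problem is your proposed argument for fully faithfulness of the completion map at corollas $C_k$ with $k\neq 1$. You write that ``the localizing map is $i_!$ of a simplicial map and hence concentrated in the $\eta$-direction,'' and propose to mimic Proposition~\ref{pr:fact_J-fib}. But the completion $X\to\widehat X$ is a fibrant replacement in $\dspcss$, built by the small object argument from \emph{all} trivial cofibrations of the localized model structure, not just the map $i_!N[0]\to i_!NI[1]$ or its image under $i_!$. In particular it is not of the form $X\to\expd{X}{i_!NI[1]}$, which is the specific map analyzed in Proposition~\ref{pr:fact_J-fib}, and there is no sense in which a generic fibrant replacement is ``concentrated in the $\eta$-direction.'' To make this strategy work you would need to construct the completion explicitly as a transfinite tower of pushouts of maps of the form treated in Proposition~\ref{pr:fact_J-fib} (together with $\mathcal{J}_0$-cofibrations, which are already Segal weak equivalences), and then argue that Dwyer--Kan equivalences are stable under such pushouts and transfinite compositions; neither step is trivial, and this is precisely the work carried out in \cite{Heuts2022}. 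As it stands your proposal correctly identifies where the difficulty lies but does not supply the missing argument.
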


\begin{proof}
    This follows from standard results on left Bousfield localizations, except for the last statement, which is proved in \cite[Theorem~12.36]{Heuts2022}.
\end{proof}

The model structures $\dspop$ and $\dspcss$ turn out to be very closely related.  
As is evident from their definitions, the class of cofibrations in $\dspcss$ contains that of $\dspop$.  
More surprisingly, the two model structures have exactly the same weak equivalences.

\begin{lem}\label{lm:weak_eq_are_the_same}
    Let $f \colon X \to Y$ be a morphism in $\dsp$.  
    Then $f$ is a weak equivalence in $\dspop$ if and only if it is a weak equivalence in $\dspcss$.
\end{lem}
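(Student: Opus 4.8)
The plan is to reduce the statement to the case of a map between dendroidal Segal spaces, where both model structures are \emph{already} known to characterize their weak equivalences as the Dwyer--Kan equivalences. For $\dspop$ this is the content of Theorem~\ref{th:fibrantly-induced} applied to Theorem~\ref{thm:main_existence}, together with the defining identification of $\mathcal{W}_f$ as the class of Dwyer--Kan equivalences between dendroidal Segal spaces; for $\dspcss$ it is property~(iv) in the defining theorem, and I would emphasize that this applies to \emph{all} dendroidal Segal spaces, not merely the complete ones. The mechanism enabling the reduction is that one can use a single Segal fibrant replacement simultaneously for both structures.

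First I would fix, for the given $f\colon X\to Y$, a commuting square
\[
\begin{tikzcd}
X \arrow[r, "j_X"] \arrow[d, "f"'] & \hat{X} \arrow[d, "\hat{f}"] \\
Y \arrow[r, "j_Y"'] & \hat{Y}
\end{tikzcd}
\]
in which $\hat{X},\hat{Y}$ are dendroidal Segal spaces and $j_X,j_Y$ are $\mathcal{J}_0$-anodyne. To build it I would apply the small object argument for $\mathcal{J}_0$ to obtain such maps $j_X$ and $j_Y$ with Segal targets (Proposition~\ref{pr:J_0_replacement}), and then lift $j_Y\circ f$ against the $\mathcal{J}_0$-anodyne map $j_X$, using that $\hat{Y}$ is $\mathcal{J}_0$-fibrant; the resulting filler is $\hat{f}$.

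Next I would observe that $j_X$ and $j_Y$ are weak equivalences in \emph{both} structures. Since $\mathcal{J}_0\subset\mathcal{J}$, these maps lie in $\mathcal{J}\text{-cof}$ and are therefore trivial cofibrations in $\dspop$; and every $\mathcal{J}_0$-cofibration is a trivial cofibration in $\dspsegal$, hence a weak equivalence in the localization $\dspcss$. Applying the 2-out-of-3 property separately in each structure to the square above then yields that $f$ is a weak equivalence in $\dspop$ (resp.\ in $\dspcss$) if and only if $\hat{f}$ is. Since $\hat{f}\colon\hat{X}\to\hat{Y}$ is a map between dendroidal Segal spaces, the two characterizations recalled above apply: $\hat{f}$ is a $\dspop$-weak equivalence iff it is a Dwyer--Kan equivalence, and $\hat{f}$ is a $\dspcss$-weak equivalence iff it is a Dwyer--Kan equivalence. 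Chaining these gives that $f$ is a weak equivalence in $\dspop$ iff $\hat{f}$ is a Dwyer--Kan equivalence iff $f$ is a weak equivalence in $\dspcss$.

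The step requiring the most care, and the only place where the difference between the two structures could cause trouble, is the appeal to property~(iv) on the $\dspcss$ side. The $\dspcss$-fibrant replacement is the \emph{completion}, which in general differs from the Segal replacement $\hat{X}$, so I cannot take $\hat{X}$ to be $\dspcss$-fibrant. It is therefore essential that property~(iv) identifies the $\dspcss$-weak equivalences between \emph{arbitrary} (not necessarily complete) dendroidal Segal spaces with the Dwyer--Kan equivalences; this is precisely what licenses the use of the non-complete map $\hat{f}$ on the $\dspcss$ side, and once it is granted the argument closes without further friction.
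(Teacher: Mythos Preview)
Your proof is correct and follows essentially the same route as the paper's: reduce to a map between dendroidal Segal spaces via a Segal fibrant replacement that is simultaneously a weak equivalence in both model structures, and then invoke the identification of weak equivalences with Dwyer--Kan equivalences on each side. The only cosmetic differences are that the paper uses $\mathcal{J}$ rather than $\mathcal{J}_0$ and obtains $\tilde f$ by factoring $j_Y\circ f$ (so that $\tilde f$ is a $\mathcal{J}$-fibration), whereas you take separate replacements and lift; neither variation affects the argument.
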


\begin{proof}
    Apply the small object argument to obtain a $\mathcal{J}$-fibrant replacement
    \[
        j_Y \colon Y \longrightarrow \tilde{Y},
    \]
    and then again to factor the composite $X \to Y \to \tilde{Y}$ as a $\mathcal{J}$-cofibration
    \[
        j_X \colon X \longrightarrow \tilde{X}
    \]
    followed by a $\mathcal{J}$-fibration
    \[
        \tilde{f} \colon \tilde{X} \longrightarrow \tilde{Y}.
    \]
    Since every map in $\mathcal{J}$ is a trivial cofibration in $\dspcss$, both $j_X$ and $j_Y$ are weak equivalences in $\dspcss$ (and of course also in $\dspop$).  
    Consequently, $f$ is a weak equivalence in $\dspcss$ (respectively, in $\dspop$) if and only if $\tilde{f}$ is.
    Finally, since $\tilde{X}$ and $\tilde{Y}$ are dendroidal Segal spaces, the map $\tilde{f}$ is a weak equivalence in $\dspcss$ if and only if it is a Dwyer--Kan equivalence (i.e. a weak equivalence in $\dspop$), by \cite[Theorem~12.36]{Heuts2022}.
\end{proof}

\begin{rem}
    An analogous statement holds in the simplicial setting. Although \cite{moser2024} establishes only one direction, the full equivalence appears not to have been explicitly noted there. As we will see, having the equivalence in both directions allows for more streamlined arguments than those in \cite{moser2024}.
\end{rem}

The situation can thus be summarized as follows: the model structures $\dspop$ and $\dspcss$ have the same weak equivalences; the cofibrations of $\dspcss$ contain those of $\dspop$; and the fibrations of $\dspop$ contain those of $\dspcss$.  
This immediately yields the following result.

\begin{thm}\label{thm:main_quilleneq}
    The identity adjunction
    $$
        \id{!} \colon \dspop \rightleftarrows \dspcss \colon \id{}^*
    $$
    is a Quillen equivalence.
\end{thm}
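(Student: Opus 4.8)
The plan is to exploit the structural facts summarized just above: $\dspop$ and $\dspcss$ are two model structures on the \emph{same} underlying category $\dsp$, they share the same class of weak equivalences by Lemma~\ref{lm:weak_eq_are_the_same}, and their cofibrations are nested, those of $\dspop$ being contained in those of $\dspcss$. I would first verify that $\id{!}\colon\dspop\to\dspcss$ is left Quillen, and then upgrade the resulting Quillen adjunction to a Quillen equivalence by a direct check of the defining criterion, which collapses to Lemma~\ref{lm:weak_eq_are_the_same} precisely because the transpose of a map under the identity adjunction is that map itself.

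For the Quillen adjunction it suffices to show that $\id{!}$ preserves cofibrations and trivial cofibrations. By Theorem~\ref{thm:main_existence}(i) every cofibration of $\dspop$ is in particular a normal monomorphism, hence a cofibration of $\dspcss$; so cofibrations are preserved. Since the two model structures have identical weak equivalences by Lemma~\ref{lm:weak_eq_are_the_same}, a trivial cofibration of $\dspop$ is a $\dspop$-cofibration that is a weak equivalence, hence a $\dspcss$-cofibration that is a weak equivalence, that is, a trivial cofibration of $\dspcss$. Therefore $\id{!}$ is left Quillen and the adjunction is a Quillen adjunction.

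To conclude that it is a Quillen equivalence, I would apply the defining criterion: for every cofibrant object $X$ of $\dspop$ and every fibrant object $Y$ of $\dspcss$, a morphism $f\colon \id{!}X\to Y$ is a weak equivalence in $\dspcss$ if and only if its adjunct $X\to\id{}^*Y$ is a weak equivalence in $\dspop$. Because this is the identity adjunction, with identity unit and counit, the adjunct of $f$ is literally $f$ itself, now regarded as a morphism $X\to Y$ in $\dsp$; the condition therefore reduces to the assertion that $f$ is a $\dspcss$-weak equivalence if and only if it is a $\dspop$-weak equivalence, which is exactly Lemma~\ref{lm:weak_eq_are_the_same}. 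Equivalently, one may observe that both model structures localize the same category at the same weak equivalences, so they share the homotopy category $\mathrm{Ho}(\dsp)$, and both derived functors $\mathbb{L}\,\id{!}$ and $\mathbb{R}\,\id{}^*$ are the identity on it, hence mutually inverse equivalences.

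I expect no genuine obstacle to remain at this stage: all of the substantive work has been absorbed into Lemma~\ref{lm:weak_eq_are_the_same}, whose proof in turn rests on the identification, via \cite[Theorem~12.36]{Heuts2022}, of the weak equivalences between dendroidal Segal spaces with the Dwyer--Kan equivalences. Granting that lemma, the statement is a formal consequence of the nesting of cofibrations together with the coincidence of weak equivalences, and the argument uses no special feature of the dendroidal setting beyond what has already been established.
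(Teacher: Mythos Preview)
Your proof is correct and follows essentially the same approach as the paper: both use the nesting of cofibrations together with Lemma~\ref{lm:weak_eq_are_the_same} to verify that $\id{!}$ is left Quillen, and then invoke the coincidence of weak equivalences to upgrade to a Quillen equivalence. The paper's version is simply more terse, but the content is the same.
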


\begin{proof}
    By the discussion above, the functor $\id{!}$ preserves cofibrations and trivial cofibrations, and therefore defines a Quillen adjunction.  
    Lemma~\ref{lm:weak_eq_are_the_same} then implies that this Quillen adjunction is in fact a Quillen equivalence.
\end{proof}

Lemma~\ref{lm:weak_eq_are_the_same} has a further immediate consequence.  
Since left and right properness depend only on the class of weak equivalences, and since $\dspcss$ is left proper as a left Bousfield localization of the left proper model structure $\dsp_{\mathrm{Reedy}}$, we obtain the following.

\begin{thm}
    The model structure $\dspop$ is left proper. $\hfill\qed$
\end{thm}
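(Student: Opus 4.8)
The plan is to transfer left properness from $\dspcss$ to $\dspop$, exploiting the fact that the two structures have identical weak equivalences while the cofibrations of $\dspop$ form a subclass of those of $\dspcss$.

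First I would record that $\dspcss$ is left proper. By construction it is a left Bousfield localization of the Reedy model structure $\dsp_{\mathrm{Reedy}}$. The latter is a (generalized) Reedy model structure built over the Kan--Quillen model structure on $\sset$, which is left proper since every simplicial set is cofibrant; Reedy model structures inherit left properness from their base, so $\dsp_{\mathrm{Reedy}}$ is left proper, and a left Bousfield localization of a left proper model category is again left proper \cite{Hirschhorn2009}. Hence $\dspcss$ is left proper.

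The remaining step is to feed in Lemma~\ref{lm:weak_eq_are_the_same}, which identifies the weak equivalences of $\dspop$ with those of $\dspcss$, together with the evident inclusion of cofibration classes: every cofibration of $\dspop$ is in particular a normal monomorphism, hence a cofibration of $\dspcss$. Concretely, given a pushout of a $\dspop$-weak equivalence along a $\dspop$-cofibration, the cofibration is also a $\dspcss$-cofibration and the weak equivalence is also a $\dspcss$-weak equivalence, so left properness of $\dspcss$ forces the pushout to be a $\dspcss$-weak equivalence, which by Lemma~\ref{lm:weak_eq_are_the_same} is again a $\dspop$-weak equivalence. Equivalently, one may phrase the argument through the general principle that left and right properness depend only on the class of weak equivalences, so that properness simply transfers across the identity functor once the two classes of weak equivalences are known to agree.

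I do not expect any genuine obstacle: the entire content sits in Lemma~\ref{lm:weak_eq_are_the_same} and in the left properness of $\dspcss$, both already available. The only point worth a moment's attention is the direction of the transfer --- since $\dspop$ has \emph{fewer} cofibrations than $\dspcss$, its left properness is the logically weaker assertion, and therefore follows from (rather than implies) that of $\dspcss$.
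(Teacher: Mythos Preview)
Your proposal is correct and matches the paper's own argument: the paper simply invokes Lemma~\ref{lm:weak_eq_are_the_same} together with the fact that left properness depends only on the class of weak equivalences, after noting that $\dspcss$ is left proper as a left Bousfield localization of the left proper Reedy model structure. Your explicit pushout argument via the inclusion of cofibrations is a self-contained variant that avoids citing the general ``properness depends only on weak equivalences'' principle, but the underlying idea is the same.
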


\subsection{Operads}

Recall that the category $\op$ admits the folk model structure, in which the weak equivalences are the equivalences of operads and the fibrations are the isofibrations.

As explained in the preliminaries, there is an adjunction
$$
    \tau_d:\dsp\rightleftarrows \op:N_d.
$$
However, the dendroidal nerve functor is not right Quillen with respect to the model structure $\dspcss$. Indeed, the dendroidal nerve of an operad $\mathcal{O}$ is complete if and only if every isomorphism in $\mathcal{O}$ is an identity, a condition which fails for most operads (for instance, for $i_!NI[1]$).

We now show that this issue disappears when working with the model structure $\dspop$, so that the adjunction above becomes a genuine Quillen adjunction.

\begin{thm}
    The adjunction
    $$
        \tau_d:\dspop\rightleftarrows \op:N_d
    $$
    is a Quillen adjunction.
\end{thm}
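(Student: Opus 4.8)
The plan is to apply the criterion of Lemma~\ref{lm:pseudo_gen} with $C=\dspop$, $D=\op$ (folk structure), $F=\tau_d$, $G=N_d$ and $J=\mathcal{J}$. Hypotheses (i) and (ii) of that lemma are exactly the content of Proposition~\ref{pr:J-fib_iff_isofib}: the $\mathcal{J}$-fibrant objects are the dendroidal Segal spaces, and a map between them is a $\mathcal{J}$-fibration precisely when it is an isofibration, i.e. a fibration between fibrant objects. Thus it remains to verify the two genuinely computational inputs: (a) that $\tau_d$ preserves cofibrations, and (b) that $\tau_d$ sends every generator in $\mathcal{J}$ to an equivalence of operads. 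Both rest on a single bookkeeping observation. Recall that $\tau_d\colon\dsp\to\op$ is the composite of $(\pi_0)_*\colon\dsp\to\dset$ with the fundamental-operad functor $\dset\to\op$; since $\bigl((\pi_0)_*(W\cdot S)\bigr)_T=W_T\times\pi_0 S$ for a dendroidal set $W$, we obtain a natural isomorphism $\tau_d(W\cdot S)\cong\coprod_{\pi_0 S}\tau_d(W)$. As $\tau_d$ preserves all colimits, this reduces every computation to $\pi_0$ of simplicial sets together with $\tau_d$ of dendroidal sets, where $\tau_d(\om T)\cong\Omega(T)$ and, since $(\partial T)_\eta=T_\eta$, the colour sets of both $\tau_d(\partial T)$ and $\tau_d(\om T)$ are equal to the edge set $E(T)$ with the evident comparison the identity.

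For (a), the cofibrations of $\dspop$ are the $\mathcal{I}$-cofibrations, so by saturation and colimit-preservation it suffices to check that each generator of $\mathcal{I}$ lands among the cofibrations of $\op$. Using the formula above, the maps $\eta\cdot\horn k n\to\eta\cdot\std n$ and, for $n\ge 2$, the maps $\pop{\cdot}{\partial T}{T}{\partial\std n}{\std n}$ are sent to isomorphisms (the relevant simplicial sets being connected); the generator $\emptyset\to\eta\cdot\std 0$ goes to the colour-injection $\emptyset\to\Omega(\eta)$; the case $n=0$ yields $\tau_d(\partial T)\to\tau_d(\om T)$; and the case $n=1$ yields the fold map $\tau_d(\om T)\amalg_{\tau_d(\partial T)}\tau_d(\om T)\to\tau_d(\om T)$. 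In every case the induced map of colour sets is injective (indeed the last two are the identity on $E(T)$). Since in the folk model structure on $\op$ every map that is injective on colours is a cofibration — such a map has the left lifting property against the trivial fibrations, which are surjective on colours and bijective on operation sets — it follows that $\tau_d$ preserves cofibrations.

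For (b) we treat the three families of $\mathcal{J}$. The generators $\pop{\cdot}{\partial T}{T}{\horn k n}{\std n}$ are sent to isomorphisms, since $\pi_0\horn k n=\pi_0\std n=\ast$. For the Segal generators $\pop{\cdot}{\spn T}{T}{\partial\std n}{\std n}$ the crucial point is that the spine inclusion induces an isomorphism $\tau_d(\spn T)\xrightarrow{\ \cong\ }\tau_d(\om T)=\Omega(T)$ (the spine already carries every vertex of $T$, and by \cite[Proposition~6.4]{Heuts2022} no further relations are imposed); applying the reduction formula, each such generator is then sent to an isomorphism for every $n$. Finally, for the endpoint inclusion $i_!N[0]\to i_!NI[1]$ we use that $(\pi_0)_*$ commutes with $i_!$ together with the commutation $\tau_d i_!\cong j_!\tau$, which identifies $\tau_d(i_!N[0]\to i_!NI[1])$ with $j_!([0]\to I[1])$. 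As $[0]\to I[1]$ is an equivalence of categories and $j_!$ preserves such equivalences, this map is an equivalence of operads. Having checked (a) and (b), Lemma~\ref{lm:pseudo_gen} shows that $\tau_d$ is left Quillen, which proves the claim.

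I expect the main obstacle to be the honest analysis of $\tau_d$ on the pushout-product generators: the $n=1$ generators produce \emph{fold} maps rather than boundary inclusions, which forces the use of the permissive description of the folk cofibrations of $\op$ as the colour-injections, and the Segal generators only collapse to isomorphisms because of the spine identification $\tau_d(\spn T)\cong\Omega(T)$.
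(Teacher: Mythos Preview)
Your argument is correct, but it takes a noticeably different route from the paper's. The paper works with the right adjoint directly: since every operad is fibrant in the folk structure and $N_d$ lands in discrete dendroidal spaces (which are automatically Reedy fibrant Segal spaces), one checks in three lines that $N_d$ sends isofibrations of operads to isofibrations of Segal spaces and equivalences of operads to Dwyer--Kan equivalences. Hence $N_d$ preserves fibrations and trivial fibrations and is right Quillen. No computation with the generators is needed.

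Your approach instead verifies that $\tau_d$ is left Quillen via Lemma~\ref{lm:pseudo_gen}, which requires computing $\tau_d$ on each generator of $\mathcal{I}$ and $\mathcal{J}$. The reduction $\tau_d(W\cdot S)\cong\coprod_{\pi_0 S}\tau_d(W)$ is a clean way to organize this, and your case analysis (isomorphisms for $n\ge 2$, the boundary map for $n=0$, the fold map for $n=1$, and the spine identification $\tau_d(\spn T)\cong\Omega(T)$ for the Segal generators) is accurate. Two small remarks: the one-line justification that colour-injections lift against trivial fibrations of operads gives only the direction you actually use, which is fine; and the citation of \cite[Proposition~6.4]{Heuts2022} for the spine identification is a bit oblique --- the real reason is simply that $\tau_d$ preserves colimits and $\spn T$ is the colimit of the constituent corollas, whose free operads glue to $\Omega(T)$. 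What your approach buys is an explicit description of where each generator lands in $\op$; what the paper's buys is brevity, since the relevant properties of $N_d$ (discreteness of the nerve, preservation of isofibrations and equivalences) are immediate.
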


\begin{proof}
    The dendroidal nerve functor sends every operad to a dendroidal Segal space and sends isofibrations of operads to isofibrations of dendroidal Segal spaces. Moreover, it sends equivalences of operads to Dwyer--Kan equivalences. Consequently, $N_d$ preserves fibrations and trivial fibrations, and is therefore a right Quillen functor.
\end{proof}

\subsection{Segal operads}

We now recall from \ref{sec:segal_preop} the category $\preop$ of Segal pre-operads, as well as the triple of adjoints

$$
\begin{tikzcd}
\preop \arrow[r, "\gamma^*"] & \dsp \arrow[l, "\gamma_!"', bend right, shift right=1] \arrow[l, "\gamma_*", bend left, shift left=1]
\end{tikzcd}
$$

\begin{defn}
    A \emph{Segal operad} is a Segal pre-operad $X$ such that for every tree $T$, the canonical map
    \begin{equation*}
        X_T \longrightarrow X_{\spn{T}}
    \end{equation*}
    is a trivial fibration of simplicial sets.
    A \emph{Reedy fibrant Segal operad} is a Segal pre-operad $X$ such that $\gamma^*X$ is a Segal space.
\end{defn}

In \cite{Cisinski_Moerdijk_2}, Cisinski and Moerdijk show that the Segal operads arise as the fibrant objects of a model structure on $\preop$.

\begin{thm}\label{th:model_str_P(Op)}
    There exists a left proper, cofibrantly generated model structure on the category $\preop$, which we denote by $\preop_{\mathrm{Segal}}$, with the following properties:
    \begin{itemize}
        \item[{\rm (i)}] The fibrant objects are the Reedy fibrant Segal operads.
        \item[{\rm (ii)}] The weak equivalences are precisely the maps which are weak equivalences in $\dspcss$.
        \item[{\rm (iii)}] The cofibrations are the normal monomorphisms.
    \end{itemize}
    Moreover, the adjoint pair $\gamma^* \dashv \gamma_*$ induces a Quillen equivalence
    $$
        \gamma^*\colon \preop_{\text{Segal}} \rightleftarrows \dspcss \colon \gamma_*.
    $$
\end{thm}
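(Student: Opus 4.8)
The plan is to treat the theorem's two assertions separately. The existence of the model structure $\preop_{\mathrm{Segal}}$ together with properties (i)--(iii) is due to Cisinski--Moerdijk, so I would cite \cite{Cisinski_Moerdijk_2}, taking care only to reconcile their intrinsic description of the weak equivalences with the extrinsic formulation in (ii). Concretely, I would argue that a map $f$ in $\preop$ is a weak equivalence in their structure if and only if $\gamma^* f$ is a weak equivalence in $\dspcss$: both structures have the Reedy fibrant Segal operads as fibrant objects, and between such objects the weak equivalences are the Dwyer--Kan equivalences by \cite[Theorem~12.36]{Heuts2022}, exactly as in the proof of Lemma~\ref{lm:weak_eq_are_the_same}. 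The substantive content is the final Quillen equivalence, and this is where I would spend the effort.

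First I would check that $\gamma^*\dashv\gamma_*$ is a Quillen adjunction. Since the cofibrations on both sides are the normal monomorphisms and $\gamma^*$ is the fully faithful inclusion, it preserves cofibrations; and by the characterization of weak equivalences in (ii) it preserves, indeed reflects, weak equivalences essentially by definition. Being a left adjoint that preserves cofibrations and trivial cofibrations, $\gamma^*$ is therefore left Quillen.

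For the equivalence I would invoke the standard criterion that a left Quillen functor is a Quillen equivalence as soon as it reflects weak equivalences between cofibrant objects and its derived counit is a weak equivalence at every fibrant object. The first condition is immediate, since $\gamma^*$ reflects all weak equivalences by the very definition of the weak equivalences in $\preop$. For the second, fix a fibrant $Z\in\dspcss$, that is, a complete (in particular Segal) dendroidal space, and let $Q\gamma_*Z\to\gamma_*Z$ be a cofibrant replacement in $\preop$. The derived counit factors as
$$
\gamma^*Q\gamma_*Z \longrightarrow \gamma^*\gamma_*Z \xrightarrow{\ \epsilon_Z\ } Z .
$$
The left map is $\gamma^*$ applied to a weak equivalence, hence a weak equivalence in $\dspcss$ because $\gamma^*$ preserves weak equivalences. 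The right map is precisely the inclusion $\gamma_*Z\hookrightarrow Z$ of the maximal subspace that is discrete at $\eta$; by Example~\ref{ex:gammaXtoX_DK} it is a Dwyer--Kan equivalence, and since $Z$ is Segal this makes it a weak equivalence in $\dspcss$ (again by \cite[Theorem~12.36]{Heuts2022}, as used in Lemma~\ref{lm:weak_eq_are_the_same}). Two-out-of-three then yields that the derived counit is a weak equivalence, completing the verification.

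The main obstacle I anticipate is not the counit computation, which reduces to Example~\ref{ex:gammaXtoX_DK}, but rather two points of bookkeeping: identifying the Cisinski--Moerdijk weak equivalences with the $\gamma^*$-detected ones of (ii), and the care required because not every Segal pre-operad is normal, hence cofibrant, in $\preop$. One must genuinely pass to the cofibrant replacement $Q\gamma_*Z$ rather than use $\gamma_*Z$ directly, and confirm that $\gamma^*$ sends the replacement map to a weak equivalence in $\dspcss$ --- which it does, precisely because $\gamma^*$ preserves all weak equivalences. Once these points are settled, the two halves of the theorem fit together cleanly.
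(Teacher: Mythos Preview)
Your proposal is correct, but it does more work than the paper's own proof, which is simply a citation: ``This is proved in \cite[Theorems~8.13, 8.15, 8.17]{Cisinski_Moerdijk_2}.'' In other words, the paper treats the entire statement---including the Quillen equivalence---as a known result of Cisinski--Moerdijk, with no argument supplied.

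Your approach, by contrast, cites Cisinski--Moerdijk for the model structure but gives a direct verification of the Quillen equivalence via the reflect-and-counit criterion, reducing the counit step to Example~\ref{ex:gammaXtoX_DK}. This is sound: $\gamma^*$ tautologically preserves and reflects weak equivalences by clause~(ii), $\gamma_*Z$ is Segal by Proposition~\ref{pr:gamma_*X_Segal}, and the inclusion $\gamma_*Z\hookrightarrow Z$ is then a Dwyer--Kan equivalence between Segal spaces, hence a weak equivalence in $\dspcss$. Interestingly, this is essentially the same counit argument the paper deploys in the \emph{next} theorem, where it establishes the Quillen equivalence of $\gamma^*\dashv\gamma_*$ between $\preop_{\mathrm{Segal}}$ and $\dspop$; there the paper invokes Example~\ref{ex:gammaXtoX_DK} explicitly and uses the present theorem (via citation) as an input. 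So your direct argument anticipates the paper's later reasoning and makes the current theorem self-contained rather than purely a citation---a reasonable trade, though strictly more than the paper itself does here.
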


\begin{proof}
    This is proved in \cite[Theorems~8.13, 8.15, 8.17]{Cisinski_Moerdijk_2}.
\end{proof}

However, the adjunction
$$
    \gamma_!\colon \dsp \rightleftarrows \preop \colon \gamma^*
$$
is not a Quillen adjunction with respect to the above model structures. This is yet another respect in which the model structure $\dspop$ exhibits better formal properties than $\dspcss$, as shown by the following result.

\begin{thm}
    The adjunctions $\gamma_! \dashv \gamma^*$ and $\gamma^* \dashv \gamma_*$ induce Quillen equivalences
    $$
        \gamma_!\colon \dspop \rightleftarrows \preop_{\mathrm{Segal}} \colon \gamma^*
    $$
    and
    $$
        \gamma^*\colon \preop_{\mathrm{Segal}} \rightleftarrows \dspop \colon \gamma_*.
    $$
\end{thm}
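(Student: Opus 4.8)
The plan is to deduce both Quillen equivalences from three facts already in hand: the Quillen equivalence $\gamma^*\colon \preop_{\mathrm{Segal}} \rightleftarrows \dspcss \colon \gamma_*$ of Theorem~\ref{th:model_str_P(Op)}, the identity Quillen equivalence $\dspop \rightleftarrows \dspcss$ of Theorem~\ref{thm:main_quilleneq}, and the coincidence of weak equivalences in $\dspop$ and $\dspcss$ from Lemma~\ref{lm:weak_eq_are_the_same}. I would treat the adjunction $\gamma^* \dashv \gamma_*$ first, as it feeds into the other.

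For $\gamma^* \dashv \gamma_*$, first I would check that $\gamma^*\colon \preop_{\mathrm{Segal}} \to \dspop$ is left Quillen. A generating cofibration of $\preop_{\mathrm{Segal}}$ is a normal monomorphism $f$ of pre-operads; since every pre-operad is discrete at $\eta$, the map $f_\eta$ is a monomorphism of discrete simplicial sets, hence an extension by a set (take $R$ to be the complement of the image), so $\gamma^* f$ is a cofibration in $\dspop$ by Proposition~\ref{pr:char_I-cof}. Moreover $\gamma^*$ creates weak equivalences: by Theorem~\ref{th:model_str_P(Op)} the weak equivalences of $\preop_{\mathrm{Segal}}$ are exactly those maps that $\gamma^*$ sends to weak equivalences in $\dspcss$, which by Lemma~\ref{lm:weak_eq_are_the_same} are the weak equivalences of $\dspop$. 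Thus $\gamma^*$ preserves cofibrations and trivial cofibrations. Since the composite $\preop_{\mathrm{Segal}} \xrightarrow{\gamma^*} \dspop \xrightarrow{\operatorname{id}} \dspcss$ is the left Quillen equivalence $\gamma^*\colon \preop_{\mathrm{Segal}} \to \dspcss$ and $\operatorname{id}\colon \dspop \to \dspcss$ is a Quillen equivalence, the two-out-of-three property for Quillen equivalences gives that $\gamma^*\colon \preop_{\mathrm{Segal}} \to \dspop$ is a Quillen equivalence.

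For $\gamma_! \dashv \gamma^*$, I would first show $\gamma_!$ is left Quillen using Lemma~\ref{lm:pseudo_gen} with $J = \mathcal{J}$, whose hypotheses hold by Proposition~\ref{pr:J-fib_iff_isofib}. Two things are needed. First, $\gamma_!$ preserves cofibrations: recalling that $\gamma_!$ discretises the $\eta$-row, with $(\gamma_! X)_\eta \cong \pi_0(X_\eta)$ realised by the pushout $\gamma_! X \cong X \amalg_{\operatorname{sk}_\eta X_\eta} \operatorname{sk}_\eta \pi_0(X_\eta)$, one checks on the generators $\mathcal{I}$ that $\gamma_!$ carries a $\dspop$-cofibration to a normal monomorphism of pre-operads. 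Here the extension-by-a-set condition forces $\pi_0(f_\eta)$ to be injective, and $\gamma_!$ alters $X$ only at $\eta$ and the remaining linear trees, all of which have trivial automorphism group, so normality is undisturbed. Second, $\gamma_!$ sends the maps of $\mathcal{J}$ to weak equivalences of $\preop_{\mathrm{Segal}}$: since $\gamma^*$ creates weak equivalences and every map of $\mathcal{J}$ is a weak equivalence in $\dspcss$, equivalently in $\dspop$, by naturality and two-out-of-three it suffices to verify that the unit $X \to \gamma^*\gamma_! X$ is a $\dspcss$-weak equivalence on the (co)domains of the generators of $\mathcal{J}$, which are contractible along the $\eta$-row.

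To upgrade $\gamma_! \dashv \gamma^*$ to a Quillen equivalence, I would use that $\gamma^*$ preserves all weak equivalences. Consequently its total right derived functor $\mathbb{R}\gamma^*$ (for $\gamma_! \dashv \gamma^*$) and its total left derived functor $\mathbb{L}\gamma^*$ (for $\gamma^* \dashv \gamma_*$) are both the functor induced by $\gamma^*$ on homotopy categories, hence coincide; as $\mathbb{L}\gamma^*$ is an equivalence by the previous step, so is $\mathbb{R}\gamma^*$, and therefore $\gamma_! \dashv \gamma^*$ is a Quillen equivalence. The main obstacle is the proof that $\gamma_!$ is left Quillen: controlling the $\eta$-discretisation $\gamma_!$ well enough to see that it preserves normal monomorphisms and carries the anodyne generators to weak equivalences is the delicate combinatorial core, and it is exactly here that the extension-by-a-set restriction on the cofibrations of $\dspop$ is indispensable --- which is also why the analogous adjunction fails to be Quillen for $\dspcss$.
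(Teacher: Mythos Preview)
Your overall strategy coincides with the paper's: both adjunctions are shown to be Quillen using Proposition~\ref{pr:char_I-cof} for cofibrations and Lemma~\ref{lm:pseudo_gen} for trivial cofibrations, and both are upgraded to Quillen equivalences via two-out-of-three against the known equivalences $\preop_{\mathrm{Segal}}\simeq\dspcss\simeq\dspop$, using that $\gamma^*$ preserves and reflects all weak equivalences. Your treatment of $\gamma^*\dashv\gamma_*$ and the derived-functor argument for upgrading $\gamma_!\dashv\gamma^*$ match the paper essentially verbatim.

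The one point that needs repair is your verification that $\gamma_!$ sends the maps of $\mathcal{J}$ to weak equivalences. Your reduction to checking that the unit $X\to\gamma^*\gamma_!X$ is a $\dspcss$-weak equivalence on the (co)domains of $\mathcal{J}$ is valid, but the stated justification ``contractible along the $\eta$-row'' is false: for type~(iii) the codomain $i_!NI[1]$ has two points at $\eta$, and for types~(i)--(ii) with $T\neq\eta$ the $\eta$-row is $T_\eta\times\std{n}$, a disjoint union of contractibles. One can still push your argument through by observing that in each case $X_\eta\to\pi_0 X_\eta$ is a weak equivalence and the relevant pushout defining $\gamma^*\gamma_!X$ is a homotopy pushout, but this requires checking that $\mathrm{sk}_\eta X_\eta\to X$ is a normal monomorphism for each such $X$. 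The paper bypasses this entirely with a cleaner case split: for type~(iii) and type~(i) with $T=\eta$ one computes $\gamma^*\gamma_!f$ directly (it is $f$ itself, respectively $\id_\eta$), while for all remaining generators $f_\eta$ is an isomorphism, so the naturality square for the unit is a \emph{pushout} and $\gamma^*\gamma_!f$ is a cobase change of the trivial cofibration $f$.
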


\begin{proof}
    We begin with the adjunction $\gamma_! \dashv \gamma^*$.
    Since all extensions by sets are injective on connected components, Proposition~\ref{pr:char_I-cof} together with \cite[Lemma~7.4]{Cisinski_Moerdijk_2} implies that $\gamma_!$ preserves cofibrations.
    To see that $\gamma_!$ preserves trivial cofibrations, it suffices by Lemma~\ref{lm:pseudo_gen} to show that it sends every map in $\mathcal{J}$ to a weak equivalence in $\preop_{\mathrm{Segal}}$.

    Unravelling the definitions, this amounts to showing that for $f \in \mathcal{J}$, the map $\gamma^*\gamma_! f$ is a weak equivalence in $\dspcss$.
    If $f$ is the map $\eta \to i_!NI[1]$, then $f$ is a Dwyer--Kan equivalence and satisfies $f = \gamma^*\gamma_! f$. If $f$ is of the form
    $$
        \pop{\cdot}{\partial T}{T}{\horn{k}{n}}{\std{n}}
    $$
    with $T = \eta$, a straightforward computation shows that $\gamma^*\gamma_! f$ is isomorphic to $\id{\eta}$. In all remaining cases, $f$ is an isomorphism at $\eta$, which implies that the naturality square
    $$
        \begin{tikzcd}
            A \arrow[d, "f"'] \arrow[r] & \gamma^*\gamma_! A \arrow[d, "\gamma^*\gamma_! f"] \\
            B \arrow[r]                 & \gamma^*\gamma_! B
        \end{tikzcd}
   $$
    is a pushout square. Since $f$ is a trivial cofibration in $\dspcss$, this completes the argument.

    To conclude that $(\gamma_!,\gamma^*)$ is a Quillen equivalence, consider the diagram of right Quillen functors induced by the counit $\varepsilon\colon \gamma_*\gamma^* \to \id{\dsp}$ of the adjunction $\gamma^* \dashv \gamma_*$:
    $$
        \begin{tikzcd}
        	& {\preop_{\text{Segal}}} \\
        	\dspcss && \dspop
        	\arrow["{\gamma^*}", from=1-2, to=2-3]
        	\arrow["{\gamma_*}", from=2-1, to=1-2]
        	\arrow[""{name=0, anchor=center, inner sep=0}, "{\id{}^{*}}"', from=2-1, to=2-3]
        	\arrow["\varepsilon", shorten >=3pt, Rightarrow, from=1-2, to=0]
        \end{tikzcd}
    $$
    By Example~\ref{ex:gammaXtoX_DK}, the counit is a weak equivalence on fibrant objects of $\dspcss$, and hence the right derived functors of $\gamma^*\gamma_*$ and $\id{}^*$ agree.
    It follows that $\gamma^*\gamma_*$ is a Quillen equivalence, and therefore so is $\gamma^*$ by Theorem~\ref{th:model_str_P(Op)} and 2-out-of-3.

    Finally, for the adjunction $\gamma^* \dashv \gamma_*$, note that normal monomorphisms of Segal pre-operads are maps of sets at $\eta$, and thus Proposition~\ref{pr:char_I-cof} implies that $\gamma^*$ preserves cofibrations.
    Moreover, by Theorem~\ref{th:model_str_P(Op)} and Lemma~\ref{lm:weak_eq_are_the_same}, the functor $\gamma^*$ preserves and reflects weak equivalences.
    Applying 2-out-of-3 to the diagram of left Quillen functors
    $$
        \begin{tikzcd}
            & \dspop \arrow[rd, "\id{!}"] & \\
            \preop_{\text{Segal}} \arrow[ru, "\gamma^*"] \arrow[rr, "\gamma^*"] & & \dspcss
        \end{tikzcd}
    $$
    we conclude that $\gamma^*$ is a left Quillen equivalence.
\end{proof}

\begin{cor}
    The model structure $\preop_{\text{Segal}}$ is both left- and right-induced from $\dspop$ along the functor $\gamma^*$. $\hfill\qed$
\end{cor}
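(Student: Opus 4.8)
The plan is to unwind the two inducing conditions and verify them directly, leaning on the fact that $\gamma^*$ is fully faithful and occurs both as the right adjoint of a Quillen equivalence (in $\gamma_!\dashv\gamma^*$) and as the left adjoint of another (in $\gamma^*\dashv\gamma_*$), as just established. Concretely, to say that $\preop_{\text{Segal}}$ is left-induced along $\gamma^*$ means that $\gamma^*$ creates the cofibrations and the weak equivalences, while right-induced means that it creates the fibrations and the weak equivalences. Since a model structure is determined by the pair (cofibrations, weak equivalences) and equally by the pair (fibrations, weak equivalences), it suffices to show that each of these three classes in $\preop_{\text{Segal}}$ is exactly the $\gamma^*$-preimage of the corresponding class in $\dspop$; once this is done, $\preop_{\text{Segal}}$ automatically coincides with both induced structures (and in particular both exist).

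First I would dispatch the weak equivalences and the cofibrations, which together yield the left-induced description. For weak equivalences, the proof of the preceding theorem already records that $\gamma^*$ preserves and reflects them, via Theorem~\ref{th:model_str_P(Op)} together with Lemma~\ref{lm:weak_eq_are_the_same}; thus $f$ is a weak equivalence in $\preop_{\text{Segal}}$ if and only if $\gamma^*f$ is one in $\dspop$. For cofibrations, $\gamma^*$ is the fully faithful inclusion and hence detects normal monomorphisms, so $\gamma^*f$ is a normal monomorphism precisely when $f$ is. Moreover, since every object of $\preop$ is discrete at $\eta$, the map $(\gamma^*f)_\eta$ is a monomorphism of sets, and any such map is automatically an extension by a set (one adjoins the complement). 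By Proposition~\ref{pr:char_I-cof}, $\gamma^*f$ is therefore a cofibration in $\dspop$ exactly when $f$ is a normal monomorphism, i.e.\ a cofibration in $\preop_{\text{Segal}}$. This exhibits $\preop_{\text{Segal}}$ as left-induced.

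It remains to treat the fibrations, which I expect to be the only genuinely non-formal step. The forward inclusion is immediate: as $\gamma^*$ is right Quillen for the equivalence $\gamma_!\dashv\gamma^*$, it preserves fibrations, so $\gamma^*f$ is a fibration in $\dspop$ whenever $f$ is a fibration in $\preop_{\text{Segal}}$. The reverse direction asks that $\gamma^*$ \emph{reflect} fibrations, and here I would argue by a lifting transfer through full faithfulness. Suppose $\gamma^*f$ is a fibration in $\dspop$ and let $j$ be any trivial cofibration of $\preop_{\text{Segal}}$; by the two identifications above $\gamma^*j$ is a trivial cofibration in $\dspop$, and hence lifts against $\gamma^*f$. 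Given any commutative square in $\preop$ with $j$ on the left and $f$ on the right, applying $\gamma^*$ produces such a lift in $\dspop$, which by fullness is the image $\gamma^*\tilde\ell$ of a morphism $\tilde\ell$ in $\preop$; faithfulness of $\gamma^*$ then forces $\tilde\ell$ to make both triangles commute, so it is a genuine lift. Thus $f$ has the right lifting property against all trivial cofibrations and is a fibration in $\preop_{\text{Segal}}$. Combined with the weak-equivalence identification, this presents $\preop_{\text{Segal}}$ as right-induced along $\gamma^*$ as well, completing the argument.
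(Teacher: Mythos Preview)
Your argument is correct and is essentially the same as the paper's: the paper simply cites \cite[Lemma~4.15]{moser2024}, which packages exactly the formal verification you carry out (full faithfulness plus being both a left and a right Quillen functor forces $\gamma^*$ to create all three classes). You have unpacked that lemma in situ rather than invoking it, but the content is identical.
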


\begin{proof}
    This follows formally from the fact that $\gamma^*$ is fully faithful and both a left and right Quillen equivalence; see \cite[Lemma~4.15]{moser2024}.
\end{proof}

As a consequence, we obtain the following characterization.

\begin{cor}
    The fibrations in $\preop_{\text{Segal}}$ between fibrant objects (i.e.\ Reedy fibrant Segal operads) are precisely the isofibrations. $\hfill\qed$
\end{cor}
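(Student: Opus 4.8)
The plan is to deduce the statement formally from the preceding corollary, which identifies $\preop_{\mathrm{Segal}}$ as right-induced from $\dspop$ along $\gamma^*$, regarded as the right adjoint of the adjunction $\gamma_!\dashv\gamma^*$. Right-induction means exactly that a morphism $f\colon X\to Y$ in $\preop_{\mathrm{Segal}}$ is a fibration if and only if $\gamma^* f$ is a fibration in $\dspop$. The whole argument then reduces to transporting the characterization of fibrations between fibrant objects across the fully faithful embedding $\gamma^*$, so that essentially no new homotopical input is required.

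First I would note that if $X$ and $Y$ are fibrant in $\preop_{\mathrm{Segal}}$, i.e.\ Reedy fibrant Segal operads, then by definition $\gamma^* X$ and $\gamma^* Y$ are dendroidal Segal spaces, hence fibrant in $\dspop$. Applying the right-induction property, $f$ is a fibration in $\preop_{\mathrm{Segal}}$ if and only if $\gamma^* f$ is a fibration between fibrant objects in $\dspop$; by Theorem~\ref{thm:main_existence}(iv) this holds precisely when $\gamma^* f$ is an isofibration of dendroidal Segal spaces. The remaining task is to recognize this last condition as the assertion that $f$ itself is an isofibration.

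The only delicate point is this final identification, which I expect to be the main (and essentially only) obstacle. Since $\gamma^*$ is fully faithful, it realizes $\preop$ as the full subcategory of dendroidal spaces that are discrete at $\eta$; in particular $\gamma_*\gamma^* X\cong\gamma^* X$, so by Corollary~\ref{co:description_ho} the homotopy operad $\ho(\gamma^* X)\cong\tau_d(\pi_0)_*\gamma^* X$ is computed directly from $X$, with no loss of information coming from $\gamma_*$. Consequently the defining data of an isofibration of dendroidal Segal spaces applied to $\gamma^* f$ — that $\gamma^* f$ is a Reedy fibration and that $j^*\ho(\gamma^* X)\to j^*\ho(\gamma^* Y)$ is an isofibration of categories — is exactly the data of an isofibration of Reedy fibrant Segal operads transported along $\gamma^*$. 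Once the right-induction of the preceding corollary is in hand, the equivalence of fibrations and isofibrations between fibrant objects follows purely by definitional bookkeeping.
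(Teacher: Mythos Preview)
Your proposal is correct and follows exactly the route the paper intends: the corollary is stated with a bare $\qed$ because it is meant to be an immediate consequence of the preceding right-induction corollary, and you have simply unpacked that implication carefully. The only thing you add beyond the paper is the explicit identification of ``isofibration'' on the pre-operad side with the image under $\gamma^*$, which is the natural reading since $\gamma^*$ is a fully faithful inclusion and no separate definition is given.
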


\section{An application to homotopy limits and colimits}

As an application of the results above, we show that for sufficiently well-behaved diagrams of dendroidal Segal spaces, the completion step can be omitted when computing homotopy limits in $\dspcss$. This follows from the next result.

\begin{thm}\label{th:bestofbothworlds}
    Let $F\colon J \to \dsp$ be a diagram and let $\alpha\colon X \Rightarrow F$ be a cone from an object $X$ to $F$. Then $\alpha$ is a homotopy limit cone in $\dspop$ if and only if it is a homotopy limit cone in $\dspcss$.
    Dually, a cocone under $F$ is a homotopy colimit in $\dspop$ if and only if it is a homotopy colimit in $\dspcss$.
\end{thm}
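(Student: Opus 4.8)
The plan is to compute the homotopy (co)limits in both model structures by means of the standard formulas on functor categories, and to show that a single (co)fibrant replacement of the diagram $F$ can be used for both; once this is done the two comparison maps coincide as morphisms of $\dsp$, and Lemma~\ref{lm:weak_eq_are_the_same} finishes the argument. Both $\dspop$ and $\dspcss$ are combinatorial: the underlying category $\dsp$ is a presheaf category and hence locally presentable, $\dspcss$ is a left Bousfield localization of the combinatorial structure $\dsp_{\mathrm{Reedy}}$, and $\dspop$ is cofibrantly generated by Theorem~\ref{thm:main_existence}. Consequently, for any small category $J$ the functor category $\mathrm{Fun}(J,\dsp)$ carries both the projective and the injective model structure relative to either $\dspop$ or $\dspcss$. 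I recall that a cone $\alpha\colon \Delta X\Rightarrow F$ is a homotopy limit cone precisely when, for an injective-fibrant replacement $\iota\colon F\to\widehat F$, the adjoint comparison map $X\to \lim\widehat F$ is a weak equivalence; dually, a cocone $\beta\colon F\Rightarrow \Delta X$ is a homotopy colimit cocone precisely when, for a projective-cofibrant replacement $q\colon \widetilde F\to F$, the adjoint map $\operatorname{colim}\widetilde F\to X$ is a weak equivalence.

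The key step is a transfer of replacements based on the Quillen equivalence of Theorem~\ref{thm:main_quilleneq}. Since $\id{!}\colon\dspop\to\dspcss$ is left Quillen, postcomposition defines a left Quillen functor $\mathrm{Fun}(J,\dspop)_{\mathrm{proj}}\to \mathrm{Fun}(J,\dspcss)_{\mathrm{proj}}$ whose underlying functor is the identity; this is because the identity commutes with the left Kan extensions $L_j$ that produce the generating projective cofibrations, and $\id{!}$ sends cofibrations (resp.\ trivial cofibrations) of $\dspop$ to cofibrations (resp.\ trivial cofibrations) of $\dspcss$. Dually, because $\id{}^*\colon\dspcss\to\dspop$ is right Quillen, the identity is a right Quillen functor $\mathrm{Fun}(J,\dspcss)_{\mathrm{inj}}\to \mathrm{Fun}(J,\dspop)_{\mathrm{inj}}$. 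Therefore a projective-cofibrant object for $\dspop$ is again projective-cofibrant for $\dspcss$, and an injective-fibrant object for $\dspcss$ is again injective-fibrant for $\dspop$. Combined with the fact that the two structures share the same weak equivalences (Lemma~\ref{lm:weak_eq_are_the_same}), a $\dspop$-projective-cofibrant replacement $q\colon\widetilde F\to F$ is simultaneously a $\dspcss$-projective-cofibrant replacement, and a $\dspcss$-injective-fibrant replacement $\iota\colon F\to\widehat F$ is simultaneously a $\dspop$-injective-fibrant replacement.

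With a single replacement in hand the conclusion is immediate. For the colimit case, using the same $\widetilde F$ in both structures, the comparison map $\operatorname{colim}\widetilde F\to X$ induced by $\beta\circ q$ is one and the same morphism of $\dsp$; by Lemma~\ref{lm:weak_eq_are_the_same} it is a weak equivalence in $\dspop$ if and only if it is one in $\dspcss$, so $\beta$ is a homotopy colimit cocone in one structure exactly when it is in the other. The limit case is dual, using the common replacement $\widehat F$ and the comparison map $X\to\lim\widehat F$. Independence of the chosen replacement, which is what guarantees that these formulas genuinely compute the homotopy (co)limit, follows from Ken Brown's lemma, since $\lim$ and $\operatorname{colim}$ are respectively right and left Quillen for the injective and projective structures.

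The main obstacle is bookkeeping rather than conceptual: one must make sure that the comparison cones in the two model structures are represented by the very same underlying map, which is exactly what using a common (co)fibrant replacement guarantees, and one must confirm the existence of the injective model structures, which is where combinatoriality of both $\dspop$ and $\dspcss$ is used. A cleaner but essentially equivalent route is to invoke directly that a right Quillen functor preserves homotopy limits and a left Quillen functor preserves homotopy colimits; applied to the identity Quillen equivalence of Theorem~\ref{thm:main_quilleneq}, whose derived functors are the identity on the common homotopy category $\mathrm{Ho}(\dsp)$ since the weak equivalences agree, this yields the same conclusion.
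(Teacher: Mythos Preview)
Your argument is correct, but it takes a considerably longer path than the paper's. The paper dispatches the theorem in one line: since, by Lemma~\ref{lm:weak_eq_are_the_same}, the model structures $\dspop$ and $\dspcss$ have \emph{identical} classes of weak equivalences, and since the notion of homotopy (co)limit cone depends only on the underlying relative category (the category together with its weak equivalences), the two notions coincide automatically. No injective or projective model structures on functor categories, no transfer of replacements, and no Quillen-functor arguments are needed.

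Your approach has the virtue of being fully explicit at the level of model-categorical constructions, and the transfer-of-replacements observation is a useful one in its own right. However, it relies on more machinery (combinatoriality, existence of injective model structures, the behaviour of generating projective cofibrations under identity-induced Quillen functors) than is actually required. The closing remark in your proof---that one could instead invoke preservation of homotopy (co)limits by Quillen functors applied to the identity Quillen equivalence---is already closer in spirit to the paper's argument, but still slightly more than necessary: once the weak equivalences literally coincide, there is nothing left to preserve.
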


\begin{proof}
    This is an immediate consequence of Lemma~\ref{lm:weak_eq_are_the_same}, together with the fact that the notion of homotopy limit (and dually, homotopy colimit) depends only on the class of weak equivalences.
\end{proof}

Note that since $\dspop$ has more fibrations than $\dspcss$, the condition of being injectively fibrant in $\dspop$ is weaker than in $\dspcss$, while the converse holds for cofibrations. In this sense, Theorem~\ref{th:bestofbothworlds} shows that, when computing homotopy limits and colimits, we truly obtain the best of both worlds: it suffices to perform the weaker of the two replacements. In particular, any pullback square
$$
    \begin{tikzcd}
        X' \arrow[r] \arrow[d] & X \arrow[d, "f"] \\
        Y' \arrow[r]           & Y
    \end{tikzcd}
$$
in which $X$, $Y$, and $Y'$ are dendroidal Segal spaces and $f$ is an isofibration is a homotopy pullback in $\dspcss$. Dually, any pushout square
\begin{equation*}
    \begin{tikzcd}
        A \arrow[r, "i"] \arrow[d] & B \arrow[d] \\
        C \arrow[r]               & D
    \end{tikzcd}
\end{equation*}
in which $A$, $B$, and $C$ are dendroidal spaces and $i$ is a normal monomorphism is a homotopy pushout in $\dspop$.

\bibliographystyle{abbrv}
\bibliography{bib}
\end{document}